\documentclass[11pt,reqno]{amsart}

\usepackage{amsmath,amssymb,amsthm}
\usepackage{graphicx,psfrag,epsf}
\usepackage{enumerate}
\usepackage{natbib}
\usepackage{url}

\oddsidemargin +0.1in
\evensidemargin +0.1in
\topmargin 0pt \textheight 8.7in \textwidth 6.35in
\linespread{1.3}

%
%\makeatletter
%\renewcommand{\theequation}{%
%\thesection.\arabic{equation}}
%\@addtoreset{equation}{section}
%\makeatletter

\newtheorem{theorem}{Theorem}
\newtheorem{corollary}{Corollary}
\newtheorem{lemma}{Lemma}

\newtheorem{assumption}{Assumption}
\theoremstyle{definition}

\newtheorem{remark}{Remark}
\newtheorem{example}{Example}

\newcommand{\R}{\mathbb{R}}
\newcommand{\mF}{\mathcal{F}}
\newcommand{\mG}{\mathcal{G}}

\newcommand{\Ep}{\mathrm{E}}
\renewcommand{\Pr}{\mathrm{P}}
\renewcommand{\tilde}{\widetilde}
\renewcommand{\hat}{\widehat}

\DeclareMathOperator{\Var}{Var}
\DeclareMathOperator{\Cov}{Cov}

\DeclareMathOperator{\E}{E}

\begin{document}
\title[Uniform confidence bands in deconvolution]{Uniform confidence bands in deconvolution with unknown error distribution}
\thanks{K. Kato is supported by Grant-in-Aid for Scientific Research (C) (15K03392) from the JSPS. We would like to thank seminar participants at Bank of Canada and UIUC, and conference participants at AMES 2017, CMES 2017, IAAE 2017, and New York Camp Econometrics XII for useful comments}
% \author[Chiang]{Harold Chiang}
\author[K. Kato]{Kengo Kato}
\author[Y. Sasaki]{Yuya Sasaki}

\date{First arXiv version: August 4, 2016. This version: \today}

\address[K. Kato]{
Graduate School of Economics, University of Tokyo\\
7-3-1 Hongo, Bunkyo-ku, Tokyo 113-0033, Japan.
}
\email{kkato@e.u-tokyo.ac.jp}

\address[Y. Sasaki]{
Department of Economics, Vanderbilt University\\
VU Station B \#351819, 2301 Vanderbilt Place, Nashville, TN 37235-1819, U.S.A.
}
\email{yuya.sasaki@vanderbilt.edu}

\begin{abstract}
This paper develops a method to construct uniform confidence bands in deconvolution when the error distribution is unknown. 
We mainly focus on the baseline setting where an auxiliary sample from the error distribution is available and the error density is ordinary smooth. 
The auxiliary sample may directly come from validation data, or can be constructed from panel data with a symmetric error distribution.
%The construction is based upon the ``intermediate'' Gaussian approximation and the Gaussian multiplier bootstrap, instead of explicit limit distributions such as Gumbel distributions.
%This approach allows us to prove validity of the proposed multiplier bootstrap confidence band under mild regularity conditions. 
%The previous literature on inference in deconvolution has focused on the case where the error distribution is known. 
%In econometric applications, the assumption that the error distribution is known is unrealistic, and the present paper fills this important void. 
We also present extensions of the results on confidence bands to the case of super-smooth error densities. 
Simulation studies demonstrate the performance of the multiplier bootstrap confidence band in the finite sample. 
We apply our method to the Outer Continental Shelf (OCS) Auction Data and draw confidence bands for the density of common values of mineral rights on oil and gas tracts.
Finally, we present an application of our main theoretical result specifically to additive fixed-effect panel data models.
As an empirical illustration of the panel data analysis, we draw confidence bands for the density of the total factor productivity in a manufacturing industry in Chile.
\bigskip\\
\textbf{Keywords:} deconvolution, measurement error, multiplier bootstrap, uniform confidence bands
\\
\textbf{JEL Code:} C14
\end{abstract}

%\keywords{bootstrap, confidence band, deconvolution, empirical process theory}

\maketitle

\section{Introduction}
%Nonparametric analysis of measurement error models has attracted attention in economics and econometrics.
In this paper, we propose a method of uniform inference on the density function of a latent signal $X$ in the measurement error model 
\begin{equation}
Y = X + \varepsilon, \label{eq: model}
\end{equation}
where $X$ and $\varepsilon$ are independent real-valued random variables with unknown densities $f_{X}$ and $f_{\varepsilon}$, respectively. 
An econometrician observes $Y$ in data, but does not observe $X$ or $\varepsilon$. 
The variable $\varepsilon$ represents a measurement error. 
In this model, the density $f_Y$ of $Y$ can be written by the convolution of $f_{X}$ and $f_{\varepsilon}$:
\begin{equation}
f_{Y}(y) = (f_{X} * f_{\varepsilon}) (y) = \int_{\R} f_{X}(x) f_{\varepsilon}(y-x) dx. \label{eq: convolution}
\end{equation}
Deconvolution refers to solving the convolution integral equation (\ref{eq: convolution}) for $f_X$, and the deconvolution problem in econometrics and statistics has concerned with identifying, estimating and making inference on $f_{X}$ from available data. 

The goal of this paper is to develop a multiplier-bootstrap method to construct uniform confidence bands for $f_{X}$ when the error density $f_{\varepsilon}$ is unknown. 
\cite{BiDuHoMu07} provide a condition under which the nonparametric bootstrap method to construct confidence bands is valid when the error density is known.
In light of this result, \cite{BoSa11} ``conjecture that the bootstrap remains consistent when the error distribution needs to be estimated,'' while they are ``not aware of a formal proof of this result.''
In this paper, we do provide a formal proof that the multiplier bootstrap is consistent when the error distribution needs to be estimated.
Furthermore, we do so under (much) milder conditions due to the construction based upon the ``intermediate'' Gaussian approximation and the Gaussian multiplier bootstrap.%, instead of explicit limit distributions such as Gumbel distributions.

Our data requirement is as follows.
We observe a sample $Y_{1},\dots,Y_{n}$ from $f_Y$.
In addition, we assume to observe an auxiliary independent sample $\eta_{1},\dots,\eta_{m}$ from $f_{\varepsilon}$, where $m=m_{n} \to \infty$ as $n \to \infty$. 
This assumption is satisfied in various ways depending on an application of interest.
%For example, in natural science, measurement errors are often due to measuring devices; in such cases, one can obtain preliminary calibration measures in the absence of signal, which produce a sample from the  error distribution; see, for example, the introduction of \cite{CoLa11} for such cases.
One case is when administrative data provide only measurement errors but do not disclose a sample of $X$.
Another case is when we have panel data or repeated measurements $(Y^{(1)}, Y^{(2)})$ for a common signal $X$ with errors $(\varepsilon^{(1)}, \varepsilon^{(2)})$, such that the conditional distribution of the one given the other is symmetric -- see Example \ref{ex: canonical} ahead.
The latter case is similar to the model of \cite{HoMa96}.
Under these two representative situations with unknown $f_\varepsilon$, we develop an estimator and confidence bands for $f_X$. % based on the multiplier bootstrap. 
Our method is based on the deconvolution kernel density estimator \citep{CaHa88,StCa90,Fa91a,Fa91b}; except that we replace the error characteristic function by the empirical characteristic function with the auxiliary sample $\eta_{1},\dots,\eta_{m}$. % from $f_{\varepsilon}$. 

Asymptotic properties of the deconvolution kernel density estimator critically depend on the smoothness of the distributions of $X$ and $\varepsilon$, where two categories of smoothness, ordinary-smooth and super-smooth distributions, are often employed \citep[cf.][]{Fa91a}.\footnote{
Specifically, the difficulty of estimating $f_{X}$ depends on how fast the modulus of the error characteristic function $\varphi_{\varepsilon}(t) = \Ep[e^{it \varepsilon}]$ with $i=\sqrt{-1}$ decays as $|t| \to \infty$, in addition to the smoothness of $f_{X}$.
The faster $|\varphi_{\varepsilon}(t)|$ decays as $|t| \to \infty$, the more difficult estimation of $f_{X}$ will be. 
The error density $f_{\varepsilon}$ is said to be \textit{ordinary-smooth} if $|\varphi_{\varepsilon}(t)|$ decays at most polynomially fast as $|t| \to \infty$, while $f_{\varepsilon}$ is said to be \textit{super-smooth} if $|\varphi_{\varepsilon}(t)|$ decays exponentially fast as $|t| \to \infty$. See \cite{Fa91a}.}
We first consider the case of ordinary-smooth error densities and prove asymptotic validity of the multiplier bootstrap confidence band under mild regularity conditions. 
In this ordinary-smooth case, the auxiliary sample size $m$ need not be large in comparison with $n$. 
%As a corollary, we derive uniform convergence rates of the deconvolution kernel density estimator with the estimated error characteristic function. 
Furthermore, we extend the results on confidence bands to the case of super-smooth error densities.
In the super-smooth case, however, we require relatively more auxiliary data ($m/n \to \infty$) for a technical reason.
It is worth pointing out that the multiplier bootstrap confidence band proposed in the present paper is robustly valid for both cases where the error density is ordinary- and super-smooth (although in the latter case we require $m/n \to \infty$), despite the fact that the limit distributions of the supremum deviation of the deconvolution kernel density estimator in general differ between those two cases.

%We consider both the case of ordinary smooth errors and the case of super-smooth errors, but make different assumptions about the asymptotic orders on $n$ and $m$ depending on the smoothness assumption.
%Conveniently for practitioners, however, our bootstrap method can be implemented robustly without a prior knowledge about the smoothness assumption when $m \gg n$ is true.
%Furthermore, because our method based on a multiplier bootstrap enjoys very light computational burdens, it has practical benefits in terms of computational time as well.
We conduct simulation studies to demonstrate the performance of the multiplier bootstrap confidence band in finite samples. 
The simulation studies show that the simulated coverage probabilities are very close to nominal coverage probabilities even with sample sizes as small as 250 and 500, suggesting practical benefits of our confidence band. 
Following \cite{LiPeVu00}, we apply our method to the Outer Continental Shelf (OCS) Auction Data \citep{HePoBo87} and draw confidence bands for the density of \textit{ex post} values of mineral rights on oil and gas tracts in the Gulf of Mexico.
In the empirical auction literature, obtaining confidence intervals/bands for a deconvolution density with unknown error distribution is of interest \citep[e.g.,][]{Kr11}, and practitioners have implemented nonparametric bootstrap without a theoretical support for its validity.
We draw valid confidence bands for a deconvolution density, and provide statistical support for some qualitative features of the common value density that \cite{LiPeVu00} find visually in their estimate.
Finally, we discuss an application of our methods to additive fixed-effect panel data models.
As an empirical illustration of the panel analysis, we draw confidence bands for the density of the total factor productivity in the food manufacturing industry in Chile using the data set of \cite{LePe03}.
%In order to illustrate the advantage of our method over existing alternatives which assume to know the error distribution, we also construct a uniform confidence band by assuming that the error distribution is known to take a particular form.
%We find qualitative differences between the two confidence bands, highlighting practical values of our approach which requires no prior knowledge of the error distribution.

The rest of the paper is organized as follows. 
Section \ref{sec: literature} reviews the related literature.
Section \ref{sec: methodology} presents our methodology of constructing confidence bands for $f_{X}$. 
Section \ref{sec: main} presents the main theoretical results of this paper where we consider the ordinary smooth case. 
Section \ref{sec: numerical simulations} presents the numerical simulations. 
Section \ref{sec: application} presents an empirical application to auction data.
Section \ref{sec: panel} presents an application to panel data and its empirical illustration.
Section \ref{sec: supersmooth} presents extensions of the results on confidence bands to the case of super-smooth error densities. 
Section \ref{sec: conclusion} concludes. 
%Section \ref{sec: proofs} contains the proof of Theorem \ref{thm: Gaussian approximation}. 
%The supplementary material contains all the proofs, additional simulation results, and a description on the empirical data. 
%Appendix \ref{sec: appendix} contains an auxiliary lemma on uniform convergence rates of the empirical characteristic function. 

\section{Relation to the Literature}\label{sec: literature}

The literature related to this paper is broad. 
We refer to books by \cite{Fu87}, \cite{CaRuStCr06}, \cite{Me09} and \citet[][Chapter 5]{Ho09} and surveys by \cite{ChHoNe11} and \cite{Sc16} for general references on measurement error models and deconvolution methods.
Our method builds upon the deconvolution kernel density estimation method, which is pioneered by \cite{CaHa88,StCa90,Fa91a,Fa91b}.
These earlier studies focus on the case where the error density $f_\varepsilon$ is assumed to be known.

The deconvolution problem with unknown $f_\varepsilon$ is studied by \cite{DiHa93,Ef97,Ne97,Jo09,CoLa11,DaReTr16}. Similarly to our paper, these papers assume the availability of auxiliary measurements from the error distribution. 
\cite{HoMa96} and \cite{LiVu98} consider to estimate a deconvolution density from repeated measurements (panel data) of $Y$, instead of assuming measurements from the error distribution \textit{per se}; see also \cite{Ne07}, \cite{DeHaMe08}, and \cite{BoRo10} for further developments. 
Our framework also covers the case of using repeated measurements (panel data) with a symmetric error distribution similarly to that of \cite{HoMa96}.
A recent work by \cite{DeHa16} relaxes the requirement of repeated measurements under the assumption of a symmetric error distribution.
Despite the richness of this literature, however, uniform confidence bands for $f_X$, which we develop in this paper, have not been developed in any of these preceding papers allowing for unknown $f_\varepsilon$.

The deconvolution problem is a statistical ill-posed inverse problem \citep[see, e.g.][]{Ho09}, and developing formal theories for inference in ill-posed inverse problems tends to be challenging.\footnote{See  \citet{HoLe12,ChCh15,Ba16} for uniform confidence bands in the context of nonparametric instrumental variables (NPIV) models, one of the popular classes of econometric models with ill-posedness.}
Existing studies on uniform confidence bands in deconvolution focus on the case where $f_\varepsilon$ is known. 
To the best of our knowledge, \cite{BiDuHoMu07} is the first paper that formally studies uniform confidence bands in deconvolution.    
They assume that $f_\varepsilon$ is known and ordinary smooth, and prove a Smirnov-Bickel-Rosenblatt type limit theorem \citep[cf.][]{Sm50,BiRo73} for the deconvolution kernel density estimator under a number of technical conditions based on the Koml\'{o}s-Major-Tusn\'{a}dy (KMT) strong approximation \citep{KoMaTu75} and  extreme value theory \citep[cf.][]{LeLiRo83}. 
They prove that the supremum deviation of the deconvolution kernel density estimator, suitably normalized, converges in distribution to a Gumbel distribution. 
They also prove consistency of the nonparametric bootstrap. 
See also \cite{BiHo08}.
For super-smooth error densities, \cite{EsGu08} show that the limit distribution of the supremum deviation of the deconvolution kernel density estimator in general differs from Gumbel distributions. 
We also refer to \cite{LoNi11,ScMuDu13,DeHaJa15}.  
Importantly, none of these papers formally studies the case where the error density $f_{\varepsilon}$ is unknown.\footnote{In developing uniform confidence bands for the cumulative distribution function, as opposed to the density function, \cite{AdOtWh16} consider the case of unknown $f_X$. \cite{AdOtWh16} appeared after the present paper was uploaded on arXiv.}
Indeed \citet[][Section 4.2]{DeHaJa15} discuss how to possibly accommodate the case of unknown error density, but a theory to support this argument is not provided.\footnote{The focus in \cite{DeHaJa15} is on pointwise confidence intervals for nonparametric regression functions, and differs from our objective to conduct uniform inference on nonparametric density functions. 
%Despite this difference in the focus, the effects of not knowing the error distribution are similar between the two frameworks.
}
While the effect of pre-estimating the unknown error characteristic function for the purpose of estimating $f_X$ is modest, its effect on the validity of inference on $f_X$ is not ignorable.
We contribute to this literature by formally establishing a method to construct uniform confidence bands for $f_X$ where the error density $f_{\varepsilon}$ is unknown. 

%In addition, the results on the uniform convergence rates are also new (although they are not our primal interest). \cite{LoNi11} first derive the minimax rates of convergence for estimating $f_{X}$ under the sup-norm loss when $f_{\varepsilon}$ is known, but do not cover the case where $f_{\varepsilon}$ is unknown. 
%The recent paper by \cite{DaReTr16} establish uniform convergence rates of the deconvolution kernel estimator with estimated error characteristic function, but only on a bounded interval. 

From a technical point of view, the present paper builds upon non-trivial applications of the ``intermediate'' Gaussian and multiplier bootstrap approximation theorems developed in \cite{ChChKa14a,ChChKa14b,ChChKa16}. 
These approximation theorems are applicable to the general empirical process under weaker regularity conditions than those for the KMT and Gumbel approximations. 
%In particular, they enable us to prove validity of the multiplier bootstrap confidence band without relying on explicit limit distributions such as Gumbel distributions and thereby under mild regularity conditions. 
%See  the discussion in Remark \ref{rem: comparison} in Section \ref{sec: main}. 
However, we stress that those theorems are not directly applicable to our problems and substantial work is need to derive our results. 
This is because: 1) the ``deconvolution'' kernel $K_{n}$ (see Section \ref{sec: methodology} ahead) is implicitly defined via the Fourier inversion, and verifying conditions in those approximation theorems with the deconvolution kernel $K_{n}$ is involved; 2) the error density $f_{\varepsilon}$ is unknown and we have to work with the estimated deconvolution  kernel $\hat{K}_{n}$, and so the estimation error has to be taken into account, which requires delicate cares.% since $\hat{K}_{n}$ depends on the inverse of the estimated error characteristic function, and the error characteristic function is, although it is assumed to be non-vanishing on the entire real line, approaching zero in the tail; see the discussion after Theorem \ref{thm: Gaussian approximation}. 

\section{Methodology}
\label{sec: methodology}

In this section, we informally present our methodology to construct confidence bands for $f_{X}$.
Formal analysis of our methodology will be carried out in the following sections. 

We first fix basic notations.
For $a,b \in \R$, let $a \wedge b = \min \{ a,b \}$. For $a \in \R$ and $b > 0$, we use the notation $[a \pm b] = [a-b,a+b]$. 
For a non-empty set $T$ and a (complex-valued) function $f$ on $T$, we use the notation $\| f \|_{T} = \sup_{t \in T} |f(t)|$. Let $\ell^{\infty}(T)$ denote the Banach space of all bounded real-valued functions on $T$ with norm $\| \cdot \|_{T}$. 
The Fourier transform of an integrable  function $f$ on $\R$ is defined by $\varphi_{f}(t) = \int_{\R} e^{itx} f(x) dx$ for $t \in \R$,
where $i=\sqrt{-1}$ denotes the imaginary unit throughout the paper. 
We refer to \cite{Fo99} as a basic reference on the Fourier analysis. 

Let $\varphi_{Y}, \varphi_{X}$, and $\varphi_{\varepsilon}$ denote the Fourier transforms (the characteristic functions) of $f_{Y},f_{X}$, and $f_{\varepsilon}$, respectively. The model (\ref{eq: model}) implies that these characteristic functions satisfy the relation
\begin{equation}
\varphi_{Y} (t) = \varphi_{X}(t) \varphi_{\varepsilon}(t) \quad \text{for all} \ t \in \R.
\label{eq: subindependence}
\end{equation}
If $\varphi_{\varepsilon}$ does not vanish on $\R$ and $| \varphi_{X} |$ is integrable on $\R$, then the Fourier inversion formula yields%  that $f_{X}$ is recovered from $f_{Y}$ and $f_{\varepsilon}$ via
\begin{equation}
f_{X} (x) = \frac{1}{2\pi} \int_{\R} e^{-itx} \frac{\varphi_{Y}(t)}{\varphi_{\varepsilon}(t)} dt. \label{eq: deconvolution}
\end{equation}
%Precisely, if $| \varphi_{X} |$ is integrable on $\R$, then $f_{X}$ has a version that is bounded and continuous, and satisfies (\ref{eq: deconvolution}) for each $x \in \R$.
Suppose that independent copies $Y_{1},\dots,Y_{n}$ of $Y$ are observed. 
%To describe an approach to constructing confidence bands for $f_{X}$, 
For convenience of presentation, assume just for the moment that the distribution of $\varepsilon$ were known. 
The standard deconvolution kernel density estimator of $f_X$ under this tentative assumption is given by 
\[
\hat{f}_{X}^{*} (x) =\frac{1}{2\pi} \int_{\R} e^{-itx} \hat{\varphi}_{Y}(t) \frac{\varphi_{K}(th_{n})}{\varphi_{\varepsilon}(t)} dt = \frac{1}{nh_{n}} \sum_{j=1}^{n} K_{n}  ((x-Y_{j})/h_{n}), 
\]
where 
\[
\hat{\varphi}_{Y}(t) = \frac{1}{n} \sum_{j=1}^{n} e^{itY_{j}}  \quad \text{and} \quad K_{n}(x) = \frac{1}{2\pi}  \int_{\R} e^{-itx} \frac{\varphi_{K}(t)}{\varphi_{\varepsilon}(t/h_{n})} dt.
\]
Here, the kernel function $K: \R \to \R$ is real-valued, is integrable, and integrates to one, such that its Fourier transform $\varphi_{K}$ is supported in $[-1,1]$ (i.e., $\varphi_{K}(t)=0$ for all $|t| > 1$), and the sequence of positive numbers (bandwidth) $h_{n}$ satisfies $h_{n} \to 0$ \citep[cf.][]{CaHa88, StCa90}. 
Note that the function $K_{n}$ is real-valued, and bounded due to the compactness of the support of $\varphi_{K}$. 
The function $K_{n}$ is called a \textit{deconvolution kernel}. 

We are now interested in constructing a confidence band for $f_{X}$ on a compact interval $I \subset \R$. 
A confidence band $\mathcal{C}_{n}$ at level $(1-\tau)$ for a given $\tau \in (0,1)$ is a family of random intervals $\mathcal{C}_{n} = \{ \mathcal{C}_{n}(x) = [c_{L}(x),c_{U}(x)]: x \in I \}$ such that 
\[
\Pr \{ f_{X}(x) \in [c_{L}(x),c_{U}(x)] \ \forall x \in I \} \geq 1-\tau.
\]
Such a confidence band can be constructed by approximating the distribution of the supremum in absolute value of the following stochastic process:
\[
Z_{n}^{*}(x) = \frac{\hat{f}_{X}^{*} (x)- \Ep[\hat{f}_{X}^{*}(x)]}{\sqrt{\Var (\hat{f}_{X}^{*}(x))}} = \frac{\sqrt{n}h_{n}\{ \hat{f}_{X}^{*} (x)- \Ep[\hat{f}_{X}^{*}(x)] \}}{\sigma_{n}(x)}, \ x \in I,
\]
where we assume that $\sigma_{n}^{2}(x) = \Var ( K_{n}((x-Y)/h_{n})) > 0$ for all $x \in I$, and $\sigma_{n}(x)$ is defined by $\sigma_{n}(x) = \sqrt{\sigma_{n}^{2}(x)}$. 
Recall the notation $\| Z_{n}^{*} \|_{I} = \sup_{x \in I} |Z^{*}_{n}(x)|$. 
Set
\[
c_{n}^{*} (1-\tau) = \text{$(1-\tau)$-quantile of $\| Z_{n}^{*} \|_{I}$}.
\]
Then, the band of the form 
\begin{equation}
\mathcal{C}_{n}^{*}(x) = \left [ \hat{f}_{X}^{*}(x) \pm  \frac{\sigma_{n}(x)}{\sqrt{n}h_{n}} c_{n}^{*}(1-\tau) \right ], \ x \in I \label{eq: ideal band}
\end{equation}
contains $\{ \Ep[ \hat{f}_{X}^{*}(x)]  : x \in I \}$ with probability at least $(1-\tau)$, as 
\[
\Pr \{ \Ep[ \hat{f}_{X}^{*}(x)] \in \mathcal{C}_{n}^{*}(x) \ \forall x \in I \} = \Pr \{ \| Z_{n}^{*} \|_{I} \leq c_{n}^{*}(1-\tau) \} \geq 1-\tau.
\]
If the bias $\| f_{X} (\cdot)- \Ep[ \hat{f}_{X}^{*}(\cdot) ]\|_{I}$ is made sufficiently small (e.g., by choosing  undersmoothing bandwidths), then the band of the form (\ref{eq: ideal band}) serves as a valid confidence band for $f_{X}$ on $I$ at level approximately $(1-\tau)$. 

Constructing a band of the form (\ref{eq: ideal band}) is, however, infeasible because both the distribution of $\| Z_{n}^{*} \|_{I}$ and the variance function $\sigma^{2}_{n}(\cdot)$ are unknown. 
More fundamentally, in most (if not all) economic applications, the error distribution is unknown, and so the deconvolution kernel estimator $\hat{f}_{X}^{*}$ is infeasible. 
In this paper, we allow $f_{\varepsilon}$ to be unknown, but assume the availability of an independent sample $\eta_{1},\dots,\eta_{m}$ from $f_{\varepsilon}$ where $m=m_{n} \to \infty$ as $n \to \infty$.  
%(This sample $(\eta_{1},\dots,\eta_{m})$ need not be independent from $(Y_{1},\dots,Y_{n})$.)
One such case is where a validation data set provides $\eta_{1},\dots,\eta_{m}$. 
A more realistic example where such observations are available is the case where we observe repeated measurements on $X$ with errors such that the conditional distribution of one given the other is symmetric.
The following example illustrates the case in point.

\begin{example}[\cite{CaRuStCr06}, p.298]
\label{ex: canonical}
Suppose that we observe repeated measurements on $X$ with errors:
\[
\begin{cases}
Y^{(1)} = X + \varepsilon^{(1)}, \\
Y^{(2)} = X + \varepsilon^{(2)},
\end{cases}
\]
where $X$ and $(\varepsilon^{(1)},\varepsilon^{(2)})$ are independent.
$\varepsilon^{(1)}$ and $\varepsilon^{(2)}$ need not to be independent, nor do they have common distribution. 
Then, we have 
\[
\underbrace{(Y^{(1)}+Y^{(2)})/2}_{=:Y} = X + \underbrace{(\varepsilon^{(1)}+\varepsilon^{(2)})/2}_{=:\varepsilon}
\]
and $(\varepsilon^{(1)} - \varepsilon^{(2)})/2 =  (Y^{(1)}-Y^{(2)})/2$.
Hence, if $\varepsilon^{(1)}+\varepsilon^{(2)}$ has the same distribution as $\varepsilon^{(1)} - \varepsilon^{(2)}$, which is true if the conditional distribution of $\varepsilon^{(2)}$ given $\varepsilon^{(1)}$ is symmetric, then $\eta:=(Y^{(1)}-Y^{(2)})/2$ has the same distribution as $\varepsilon = (\varepsilon^{(1)}+\varepsilon^{(2)})/2$.
In this example, $m=n$.
$\triangle$ 
\end{example}

In any of these cases, a natural estimator of $\varphi_{\varepsilon}$ is the empirical characteristic function based on $\eta_{1}.\dots,\eta_{m}$:
\[
\hat{\varphi}_{\varepsilon}(t)  = \frac{1}{m} \sum_{j=1}^{m} e^{i t \eta_{j}}.
\]
Suppose that $\inf_{|t| \leq h_{n}^{-1}}| \hat{\varphi}_{\varepsilon}(t) | > 0$ with probability approaching one, which is indeed guaranteed under the assumptions to be formally stated below.
Then, we can estimate the deconvolution kernel $K_{n}$ by
\begin{equation}
\hat{K}_{n}(x) =\frac{1}{2\pi}  \int_{\R} e^{-itx} \frac{\varphi_{K}(t)}{\hat{\varphi}_{\varepsilon}(t/h_{n})} dt.
\label{eq: estimated kernel}
\end{equation}
Now, define the feasible version of $\hat{f}_{X}^{*}$ as 
\[
\hat{f}_{X} (x)= \frac{1}{nh_{n}} \sum_{j=1}^{n} \hat{K}_{n}((x-Y_{j})/h_{n}).
\]
%(Formally think of $\hat{K}_{n}(x) = 0$ if the integral in (\ref{eq: estimated kernel}) is not well-defined.) 
This estimator was first considered by \cite{DiHa93}. 
%Note that $\hat{K}_{n}$ is real-valued. 
In addition, we may estimate the variance function $\sigma_{n}^{2}(x)$ by 
\[
\hat{\sigma}_{n}^{2}(x) = \frac{1}{n} \sum_{j=1}^{n} \hat{K}_{n}^{2}((x-Y_{j})/h_{n}) - \left ( \frac{1}{n} \sum_{j=1}^{n} \hat{K}_{n}((x-Y_{j})/h_{n}) \right )^{2}. 
\]
Consider the stochastic process
\[
\hat{Z}_{n}(x) = \frac{\sqrt{n}h_{n} \{ \hat{f}_{X}(x) - \Ep[ \hat{f}_{X}^{*}(x) ] \}}{\hat{\sigma}_{n}(x)}, \ x \in I,
\]
where $\hat{\sigma}_{n}(x) = \sqrt{\hat{\sigma}_{n}^{2}(x)}$.
Let 
\[
c_{n}(1-\tau) = \text{$(1-\tau)$-quantile of $\| \hat{Z}_{n} \|_{I}$}.
\]
%(Formally think of $\hat{Z}_{n}(x)$ as $0$ if $\hat{\sigma}_{n}(x)=0$; the probability that $\hat{\sigma}_{n}(x)=0$ for some $x \in I$ is approaching zero under the assumptions stated below. The same rule applies to $\hat{Z}_{n}^{\xi}$ defined below.) 
Then the band of the form 
\[
\left [ \hat{f}_{X}(x) \pm \frac{\hat{\sigma}_{n}(x)}{\sqrt{n}h_{n}}c_{n}(1-\tau) \right ], \ x \in I
\]
will be a valid confidence band for $f_{X}$ on $I$ at level approximately $(1-\tau)$, provided that the bias $\| f_{X} (\cdot)- \Ep[ \hat{f}_{X}^{*}(\cdot) ]\|_{I}$ is made sufficiently small. 

The quantiles of $\| \hat{Z}_{n} \|_{I}$ are still unknown, but it will be shown below that, under suitable regularity conditions, the distribution of $\| \hat{Z}_{n} \|_{I}$ can be approximated by that of the supremum in absolute value of a tight Gaussian random variable $Z_{n}^{G}$ in $\ell^{\infty}(I)$ with mean zero and the same covariance function as $Z_{n}^{*}$. 
As such, we propose to estimate the quantiles of $\| \hat{Z}_{n} \|_{I}$ via the \textit{Gaussian multiplier bootstrap} as in \cite{ChChKa14b} in the following manner. 

Generate independent standard normal random variables $\xi_{1},\dots,\xi_{n} \sim N(0,1)$, independently of the data $\mathcal{D}_{n} = \{ Y_{1},\dots,Y_{n},\eta_{1},\dots,\eta_{m} \}$, and consider the multiplier process
\[
\hat{Z}_{n}^{\xi} (x) = \frac{1}{\hat{\sigma}_{n}(x)\sqrt{n}} \sum_{j=1}^{n} \xi_{j} \left \{  \hat{K}_{n} ((x-Y_{j})/h_{n}) - n^{-1} {\textstyle \sum}_{j'=1}^{n} \hat{K}_{n}((x-Y_{j'})/h_{n}) \right \}
\]
for $x \in I$. 
Conditionally on the data $\mathcal{D}_{n}$, $\hat{Z}_{n}^{\xi}(x), x \in I$ is a Gaussian process with mean zero and the covariance function ``close'' to that of $Z_{n}^{G}$. 
Hence we propose to estimate the quantile $c_{n}(1-\tau)$ by 
\[
\hat{c}_{n}(1-\tau) = \text{conditional $(1-\tau)$-quantile of $\| \hat{Z}_{n}^{\xi} \|_{I}$ given $\mathcal{D}_{n}$},
\]
and the resulting confidence band is 
\begin{equation}
\hat{\mathcal{C}}_{n}(x) = \left [ \hat{f}_{X}(x) \pm \frac{\hat{\sigma}_{n}(x)}{\sqrt{n}h_{n}} \hat{c}_{n}(1-\tau) \right ], \ x \in I. \label{eq: multiplier bootstrap confidence band}
\end{equation}

A few remarks are in order.
\begin{remark}
 How to choose the bandwidth in practice is an important yet difficult problem in any nonparamtric inference. 
Practical choice of the bandwidth will be discussed in Section \ref{subsec: choice of the bandwidth}. 
\end{remark}
\begin{remark}
Our construction (and the formal analysis below) covers the case where $I$ is singleton, i.e., $I=\{ x_{0} \}$.
In this case, the above confidence band gives a confidence interval for $f_{X}(x_{0})$. 
\end{remark}
\begin{remark}
The presence of $\hat{\varphi}_{\varepsilon}$ in the denominator in the integrand in (\ref{eq: estimated kernel}) could make the estimate $\hat{f}_{X}$ numerically unstable in practice.
A solution to this problem is to restrict the integral in (\ref{eq: estimated kernel}) to the set $\{ t \in \R: | \hat{\varphi}_{\varepsilon}(t)| \geq m^{-1/2} \}$ \citep[cf.][]{Ne97}. 
Likewise, replacing $\hat{\sigma}_{n}(x)$ by $\max \{ \hat{\sigma}_{n}(x), \sqrt{h}_{n} \}$ in the definition of $\hat{Z}_{n}$ and $\hat{Z}_{n}^{\xi}$ would make resulting confidence bands numerically more stable in practice. 
These modifications do not alter the asymptotic results presented below, and we will work with the original definitions of $\hat{K}_{n}$ and $\hat{\sigma}_{n}$. 
\end{remark}
\begin{remark}
In the present paper, we work with the classical measurement error setting, namely, we assume that $X$ and $\varepsilon$ are independent. 
However, for our theoretical results to hold, the full independence between $X$ and $\varepsilon$ is not necessary. 
Instead, we only require condition (\ref{eq: subindependence}), which may hold even when $X$ and $\varepsilon$ are not independent. \cite{Sc13} argues that condition (\ref{eq: subindependence}) is ``as weak as a conditional mean assumption.''
\end{remark}

%\begin{remark}[Independence between $X$ and $\varepsilon$]
%\end{remark}

\section{Main results}
\label{sec: main}

In this section, we present theorems that provide the asymptotic validity of the proposed confidence bands.
We first consider the case where the error density $f_{\varepsilon}$ is ordinary smooth.
%\subsection{Ordinary smooth case}
%
%We first cover the ordinary smooth case. 
We begin with stating and discussing the assumptions.

\begin{assumption}
\label{as: standard}
The function $| \varphi_{X} |$ is integrable on $\R$.
 \end{assumption}

\begin{assumption}
\label{as: kernel}
Let $K: \R \to \R$ be an integrable function (kernel) such that  $\int_{\R} K(x) dx = 1$, and its Fourier transform $\varphi_{K}$ is continuously differentiable and supported in $[-1,1]$. 
\end{assumption}

Both of these assumptions are standard in the literature on deconvolution. Note that Assumption \ref{as: standard} implies that $f_{X}$ is bounded and continuous, which in turn implies that $f_{Y} = f_{X} * f_{\varepsilon}$ is bounded and continuous. Recall that if $f: \R \to \R$ is integrable and $g: \R \to \R$ is bounded, then their convolution $f*g$ is bounded and continuous \citep[cf.][Proposition 8.8]{Fo99}. 
The kernel function $K$ does not necessarily have to be non-negative under Assumption \ref{as: kernel}.

The next assumption is concerned with the tail behavior of the error characteristic function $\varphi_{\varepsilon}$, which is a source of ``ill-posedness'' of the deconvolution problem and an important factor that determines the difficulty of estimating $f_{X}$; see, e.g., \cite{Ho09}. 
(Another factor is the smoothness of $f_{X}$.) 
We assume here that the error density $f_{\varepsilon}$ is ordinary smooth, i.e., $| \varphi_{\varepsilon} (t) |$ decays at most polynomially fast as $|t| \to \infty$, as formally stated below. 

\begin{assumption}
\label{as: ill-posedness}
The error characteristic function $\varphi_{\varepsilon}$ is continuously differentiable and does not vanish on $\R$, and 
there exist constants  $\alpha > 0$ and $C_{1} > 1$ such that $
C_{1}^{-1} |t|^{-\alpha} \leq | \varphi_{\varepsilon} (t) | \leq C_{1} |t|^{-\alpha}$ and $| \varphi_{\varepsilon}'(t) | \leq C_{1}|t|^{-\alpha-1}$ for all $|t| \geq 1$
\end{assumption}

Concrete examples of distributions that satisfy Assumption \ref{as: ill-posedness} are Laplace and Gamma distributions together with their convolutions, but apparently many other distributions satisfy Assumption \ref{as: ill-posedness}. It is not difficult to see that Assumption \ref{as: ill-posedness} implies that 
\[
\inf_{|t| \leq h_{n}^{-1}} |\varphi_{\varepsilon} (t)| \geq C_{1}^{-1} (1 - o(1)) h_{n}^{\alpha}
\]
as $n \to \infty$. The value of $\alpha$ quantifies the degrees of ``ill-posedness'' of the deconvolution problem, and the larger the value of $\alpha$ is, the more difficult the estimation of $f_{X}$ will be. 

We draw confidence bands for $f_X$ on a set $I$ given in the following assumption.
\begin{assumption}
\label{as: variance}
Let $I \subset \R$ be a compact interval such that $f_{Y}(y) > 0$ for all $y \in I$. 
\end{assumption}
Now, recall that $\sigma_{n}^{2}(x) = \Var (K_{n}  ((x-Y)/h_{n}))$.
In developing our theory, we will need $\sigma_{n}^{2}(x)/h_{n}^{-2\alpha+1}$ to be bounded away from zero on $I$. 
It will be shown in Lemma \ref{lem: variance} that Assumptions \ref{as: standard}--\ref{as: variance} guarantee that $\sigma_{n}^{2}(x)/h_{n}^{-2\alpha+1}$ is bounded away from zero on $I$ for sufficiently large $n$.

The next assumption is a mild moment condition on the error distribution, which is used in establishing  uniform convergence rates of the empirical characteristic function (see Lemma \ref{lem: rate ecf}). 

\begin{assumption}
\label{as: moment condition}
$\Ep[| \varepsilon |^{p}] < \infty$ for some $p > 0$.
\end{assumption}

The next assumption mildly restricts the bandwidth $h_{n}$ and the sample size $m = m_{n}$ for $f_{\varepsilon}$. 
\begin{assumption}
\label{as: rate}
(a) $\frac{(\log h_{n}^{-1})^{2}}{nh_{n}^{2}} \to 0$. 
(b) $\frac{nh_{n} \log h_{n}^{-1}}{m} \bigvee \frac{(\log h_{n}^{-1})^{2}}{mh_{n}^{2\alpha+2}} \to 0$. 
\end{assumption}

\begin{remark}
For an illustrative purpose, consider the canonical case where $m=n$. 
Then Assumption \ref{as: rate} reduces to the following simple condition: 
\begin{equation}
\frac{(\log h_{n}^{-1})^{2}}{nh_{n}^{2\alpha+2}} \to 0. \label{eq: simplified condition}
\end{equation}
The conventional ``optimal'' bandwidth that minimizes the MISE of the kernel estimator (when $f_{\varepsilon}$ is known) is proportional to $n^{-1/(2\alpha+2\beta+1)}$ where $\beta$ is the ``smoothness'' of $f_{X}$ \citep[cf.][]{Fa91a}, and so condition (\ref{eq: simplified condition}) is satisfied with this bandwidth if $\beta >1/2$. 
See also Corollary \ref{cor: uniform convergence rate} below.
\end{remark}

The following theorem establishes that the distribution of the supremum in absolute value of the stochastic process $\hat{Z}_{n}(x), x \in I$ can be approximated by that of a tight Gaussian random variable $Z_{n}^{G}$ in $\ell^{\infty}(I)$ with mean zero and the same covariance function as $Z_{n}^{*}$. 
This theorem is a building block for establishing the validity of the Gaussian multiplier bootstrap described in the previous section. 
Recall that a Gaussian process $Z = \{ Z(x) :  x \in I \}$ indexed by $I$ is a tight random variable  in $\ell^{\infty}(I)$ if and only if $I$ is totally bounded for the intrinsic pseudo-metric $\rho_{2}(x,y) = \sqrt{\Ep[ \{ Z(x) - Z(y) \}^{2}]}$ for $x,y \in I$, and $Z$ has sample paths almost surely uniformly $\rho_{2}$-continuous; see \citet[][p.41]{vaWe96}.  
In that case, we say that $Z$ is a tight Gaussian random variable in $\ell^{\infty}(I)$ \citep[cf.][Lemma 3.9.8]{vaWe96}. 

\begin{theorem}
\label{thm: Gaussian approximation}
Suppose htat Assumptions \ref{as: standard}--\ref{as: rate} are satisfied. For each sufficiently large $n$, there exists a tight Gaussian random variable $Z_{n}^{G}$ in $\ell^{\infty}(I)$ with mean zero and the same covariance function as $Z_{n}^{*}$, such that 
as $n \to \infty$, 
\[
\sup_{z \in \R} | \Pr \{ \| \hat{Z}_{n} \|_{I} \leq z \} - \Pr \{ \| Z_{n}^{G} \|_{I} \leq z \} | \to 0.
\]
\end{theorem}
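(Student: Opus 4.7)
The plan is to prove Theorem 1 by a two-stage coupling argument. First, I approximate the feasible Studentized process $\hat{Z}_n$ by its infeasible counterpart $Z_n^*$ (built with the true deconvolution kernel $K_n$ and true standard deviation $\sigma_n$). Second, I apply the intermediate Gaussian approximation theorem of \cite{ChChKa14b} to $Z_n^*$, viewed as the supremum of an empirical process over the class $\mathcal{F}_n = \{y \mapsto K_n((x-y)/h_n)/\sigma_n(x) : x \in I\}$. The two steps are glued together using Gaussian anti-concentration, which converts an $o_P((\log h_n^{-1})^{-1/2})$ sup-norm perturbation into vanishing Kolmogorov distance.

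The first stage is the delicate part. Writing
\[
\hat{K}_n(x)-K_n(x) = \frac{1}{2\pi}\int_\R e^{-itx}\varphi_K(t)\frac{\varphi_\varepsilon(t/h_n)-\hat{\varphi}_\varepsilon(t/h_n)}{\varphi_\varepsilon(t/h_n)\hat{\varphi}_\varepsilon(t/h_n)}\,dt,
\]
I use Assumption 3 to bound $|\varphi_\varepsilon(t/h_n)|^{-1} \lesssim h_n^{-\alpha}$ on $\mathrm{supp}(\varphi_K)\subset[-1,1]$, and combine this with the uniform rate $\|\hat{\varphi}_\varepsilon - \varphi_\varepsilon\|_{[-h_n^{-1},h_n^{-1}]} = O_P(\sqrt{\log h_n^{-1}/m})$ (the lemma on empirical characteristic functions alluded to in Assumption 5) to obtain $\|\hat{K}_n - K_n\|_\R = O_P(h_n^{-2\alpha}\sqrt{\log h_n^{-1}/m})$. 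Propagating this through $\hat{f}_X - \hat{f}_X^*$ and $\hat{\sigma}_n^2 - \sigma_n^2$ (for the latter, combining a standard concentration bound on the empirical process built from $K_n$ with the $\hat{K}_n \to K_n$ bound), and dividing by the lower bound $\sigma_n(x) \gtrsim h_n^{-\alpha+1/2}$ from Lemma \ref{lem: variance}, yields
\[
\|\hat{Z}_n - Z_n^*\|_I = o_P\bigl((\log h_n^{-1})^{-1/2}\bigr)
\]
under Assumption 6. Assumption 6(b) is precisely calibrated so that the $\hat{\varphi}_\varepsilon$-induced error, amplified by the factor $h_n^{-2\alpha}$, remains below this anti-concentration threshold.

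For the second stage, I apply \cite{ChChKa14b} to the one-dimensional class $\mathcal{F}_n$. Continuous differentiability of $\varphi_K$ (Assumption 2) gives $K_n$ a Lipschitz constant of order $h_n^{-\alpha-1}$, so $\mathcal{F}_n$ is a Lipschitz family indexed by $x \in I$ with envelope $F_n$ of order $h_n^{-1/2}$ (coming from $\|K_n\|_\infty \lesssim h_n^{-\alpha}$ and the variance lower bound). This yields uniform-entropy estimates of VC type, and the resulting Gaussian approximation rate is controlled by $(F_n^2 \log^? h_n^{-1}/n)^{1/6}$-type quantities which tend to zero under Assumption 6(a). The output is a tight mean-zero Gaussian process $Z_n^G$ on $\ell^\infty(I)$ with the same covariance as $Z_n^*$ such that $\sup_z |\Pr\{\|Z_n^*\|_I \leq z\} - \Pr\{\|Z_n^G\|_I \leq z\}| \to 0$.

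Finally, by Gaussian anti-concentration \citep[][]{ChChKa14a}, $\sup_z \Pr\{|\|Z_n^G\|_I - z| \leq \delta\} \lesssim \delta\sqrt{\log h_n^{-1}}$ for small $\delta$, which combined with the $o_P((\log h_n^{-1})^{-1/2})$ bound from the first stage promotes the coupling for $Z_n^*$ to a coupling for $\hat{Z}_n$ in the Kolmogorov metric. The main obstacle is managing the interplay between three rates: the blow-up $h_n^{-\alpha}$ from inverting $\varphi_\varepsilon$, the variance normalization $h_n^{-\alpha+1/2}$, and the anti-concentration threshold $(\log h_n^{-1})^{-1/2}$; the Fourier representation of $\hat{K}_n - K_n$ must be handled carefully because a naive sup-norm bound on $\hat{\varphi}_\varepsilon - \varphi_\varepsilon$ is too crude and one must exploit both the compactly-supported $\varphi_K$ and the polynomial decay of $\varphi_\varepsilon$.
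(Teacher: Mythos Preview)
Your overall architecture---reduce $\hat{Z}_n$ to $Z_n^*$ up to $o_P((\log h_n^{-1})^{-1/2})$, apply the Chernozhukov--Chetverikov--Kato coupling to $Z_n^*$, and glue via anti-concentration---matches the paper's. Your treatment of the second stage (the Gaussian coupling for $Z_n^*$) is essentially correct; the Lipschitz argument you sketch for the VC-type property is a valid alternative to the paper's route through bounded quadratic variation of $K_n$.

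The gap is in your first stage. You propose to bound $\|\hat{f}_X - \hat{f}_X^*\|_I$ by propagating $\|\hat{K}_n - K_n\|_\R = O_P(h_n^{-2\alpha}\sqrt{(\log h_n^{-1})/m})$. But this yields $\|\hat{f}_X-\hat{f}_X^*\|_I \leq h_n^{-1}\|\hat{K}_n-K_n\|_\R = O_P(h_n^{-2\alpha-1}m^{-1/2}\sqrt{\log h_n^{-1}})$, and after multiplying by $\sqrt{n}h_n/\sigma_n(x)\lesssim \sqrt{n}h_n^{\alpha+1/2}$ you get a term of order $\sqrt{(n\log h_n^{-1})/(mh_n^{2\alpha+1})}$. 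For this to be $o((\log h_n^{-1})^{-1/2})$ you would need $n(\log h_n^{-1})^2/(mh_n^{2\alpha+1})\to 0$, which is \emph{not} implied by Assumption~\ref{as: rate} and in particular fails when $m=n$. The paper flags exactly this: the crude rate on $\hat f_X-\hat f_X^*$ ``is not sufficient for our purpose and in particular excludes the case with $m=n$.''

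What the paper does instead (its Step~2) is to work directly in the Fourier domain and decompose
\[
\hat f_X-\hat f_X^* = \bigl\{(\hat f_X-\tilde f_X)-(\hat f_X^*-\Ep[\hat f_X^*])\bigr\}+(\tilde f_X-\Ep[\hat f_X^*]),
\]
where $\tilde f_X(x)=(2\pi)^{-1}\int e^{-itx}\varphi_K(th_n)\{\varphi_\varepsilon(t)/\hat\varphi_\varepsilon(t)\}\varphi_X(t)\,dt$. The key gain is that in the dominant piece $\tilde f_X-\Ep[\hat f_X^*]$, one factor of $\varphi_\varepsilon$ cancels (via $\varphi_Y=\varphi_X\varphi_\varepsilon$), leaving $\int|\varphi_\varepsilon/\hat\varphi_\varepsilon-1|^2|\varphi_X|\,dt=O_P(h_n^{-2\alpha}m^{-1})$ because $|\varphi_X|$ is \emph{integrable} (Assumption~\ref{as: standard}); no $h_n^{-1}$ factor from the length of the integration interval appears. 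The cross term carries both $\hat\varphi_Y/\varphi_Y-1$ and $\varphi_\varepsilon/\hat\varphi_\varepsilon-1$ and is handled by Cauchy--Schwarz. This refined decomposition, not the sup-norm bound on $\hat K_n-K_n$, is what makes Assumption~\ref{as: rate}(b) sufficient.
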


In the case where $I$ is not a singleton, it is possible to further show that $\| Z_{n}^{G} \|_{I}$ (and hence $\| \hat{Z}_{n} \|_{I}$) properly normalized converges in distribution to a Gumbel distribution (i.e., a Smirnov-Bickel-Rosenblatt type limit theorem) under additional substantial conditions, as in \cite{BiDuHoMu07}.
However, we intentionally stop at the ``intermediate'' Gaussian approximation instead of deriving the Gumbel approximation, because of the following two reasons. 1) The Gumbel approximation is poor, and the coverage error of the resulting confidence band is of order $1/\log n$ \citep{Ha91}. 2) Deriving the Gumbel approximation requires additional substantial conditions.
%\begin{itemize}
%\item The Gumbel approximation is poor. The coverage error of the resulting confidence band is of order $1/\log n$ \citep{Ha91}. 
%\item Deriving the Gumbel approximation requires additional substantial conditions.
%\end{itemize}
Because of the slow rate of the Gumbel approximation, it is often preferred to use versions of bootstraps to construct confidence bands for nonparametric density and regression functions \citep[see, e.g.,][]{Clva03, BiDuHoMu07}, but the Gumbel approximation was used as  a building block for showing validity of the bootstraps. 
It was, however, pointed out in \cite{ChChKa14b} that the intermediate Gaussian approximation (such as that in Theorem \ref{thm: Gaussian approximation}) is in fact sufficient for showing the validity of bootstraps. 
We defer the discussion on the regularity conditions to the end of this section. 

Another technicality in the proof of Theorem \ref{thm: Gaussian approximation} concerns about bounding the effect of the estimation error of $\hat{\varphi}_{\varepsilon}$. 
\citet[][p.172]{DaReTr16} derive a bound on $\| \hat{f}_{X} - \hat{f}_{X}^{*} \|_{\R}$ that is of order $O_{\Pr} \{ h_{n}^{-\alpha} (mh_{n})^{-1/2} \}$, but this rate is not sufficient for our purpose and in particular excludes the case with $m=n$ in Theorem \ref{thm: Gaussian approximation}; see Step 2 in the proof of Theorem \ref{thm: Gaussian approximation}. Hence, to bound the effect of the estimation error of $\hat{\varphi}_{\varepsilon}$, we require a novel idea beyond \cite{DaReTr16}; see Step 2 in the proof of Theorem \ref{thm: Gaussian approximation}. 

As a byproduct of the techniques used to prove Theorem \ref{thm: Gaussian approximation}, we can derive uniform convergence rates of $\hat{f}_{X}$ on $\R$. 
%Although the uniform convergence rate is not our primal interest, we believe that it is of independent interest, because we are not aware of any previous result on the sharp uniform convergence rate in estimation of $f_{X}$ when the error distribution is unknown. 
In the next corollary, Assumption \ref{as: variance} is not needed.

\begin{corollary}
\label{thm: uniform convergence rate}
Suppose that Assumptions \ref{as: standard}--\ref{as: ill-posedness}, \ref{as: moment condition}, and \ref{as: rate} are satisfied. Then, $\| \hat{f}_{X} (\cdot) - \Ep [ \hat{f}_{X}^{*}(\cdot) ] \|_{\R} = O_{\Pr} \{ h_{n}^{-\alpha} (nh_{n})^{-1/2}  \sqrt{\log h_{n}^{-1}} \}$ as $n \to \infty$.
%\[
%\| \hat{f}_{X} - \Ep [ \hat{f}_{X}^{*}(\cdot) ] \|_{\R} = O_{\Pr}\left \{ h_{n}^{-\alpha} (nh_{n})^{-1/2}  \sqrt{\log h_{n}^{-1}} \right \}.
%\]
\end{corollary}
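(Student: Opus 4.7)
The plan is to split
\[
\hat f_X - \E[\hat f_X^*] \;=\; (\hat f_X^* - \E[\hat f_X^*]) \;+\; (\hat f_X - \hat f_X^*)
\]
and control each piece in $\|\cdot\|_{\R}$ separately. The first term is the centered oracle deconvolution kernel estimator with the known-$f_\varepsilon$ kernel $K_n$, and the second captures the cost of estimating $\varphi_\varepsilon$ from the auxiliary sample.

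For the oracle term I would invoke empirical process inequalities for the translation class $\mathcal G_n=\{K_n((x-\cdot)/h_n):x\in\R\}$. Assumption~\ref{as: ill-posedness} combined with the Fourier-inversion definition of $K_n$ yields the uniform envelope $\|K_n\|_{\R}\lesssim h_n^{-\alpha}$, and a Plancherel-type computation (cf.\ Lemma~\ref{lem: variance}) gives $\E[K_n^2((x-Y)/h_n)]\lesssim h_n^{-2\alpha+1}$ uniformly in $x\in\R$. Since $\mathcal G_n$ is of VC type with a uniformly bounded index, a Talagrand-type inequality (or bracketing entropy bound) then delivers
\[
\|\hat f_X^* - \E[\hat f_X^*]\|_{\R} \;=\; O_{\Pr}\bigl(h_n^{-\alpha}(nh_n)^{-1/2}\sqrt{\log h_n^{-1}}\bigr),
\]
which is precisely the target rate; Assumption~\ref{as: rate}(a) renders the higher-order $h_n^{-\alpha}\log h_n^{-1}/(nh_n)$ remainder negligible.

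For the estimation-error term I would stay in the Fourier domain, writing
\[
\hat f_X(x)-\hat f_X^*(x) \;=\; \frac{1}{2\pi}\int_{\R} e^{-itx}\hat\varphi_Y(t)\varphi_K(th_n)\frac{\varphi_\varepsilon(t)-\hat\varphi_\varepsilon(t)}{\hat\varphi_\varepsilon(t)\varphi_\varepsilon(t)}\,dt,
\]
and then splitting $\hat\varphi_Y=\varphi_Y+(\hat\varphi_Y-\varphi_Y)$ with $\varphi_Y=\varphi_X\varphi_\varepsilon$ to cancel one copy of $\varphi_\varepsilon$ in the leading piece. That leading piece is bounded, uniformly in $x$, by the product $\|\varphi_K\|_\infty \cdot \int|\varphi_X|\,dt \cdot \|\hat\varphi_\varepsilon-\varphi_\varepsilon\|_{[-h_n^{-1},h_n^{-1}]} \cdot \sup_{|t|\le h_n^{-1}}|\hat\varphi_\varepsilon(t)|^{-1}$. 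The first two factors are finite by Assumptions~\ref{as: kernel} and~\ref{as: standard}; Lemma~\ref{lem: rate ecf} (for which Assumption~\ref{as: moment condition} is used) gives the third factor as $O_{\Pr}(\sqrt{\log h_n^{-1}/m})$; and Assumption~\ref{as: rate}(b) ensures $|\hat\varphi_\varepsilon|\ge c|\varphi_\varepsilon|$ on $[-h_n^{-1},h_n^{-1}]$ with probability going to one, making the last factor $O_{\Pr}(h_n^{-\alpha})$. The remainder piece picks up an additional $\|\hat\varphi_Y-\varphi_Y\|_{[-h_n^{-1},h_n^{-1}]}=O_{\Pr}(\sqrt{\log h_n^{-1}/n})$ and one more $|\varphi_\varepsilon|^{-1}$ factor, and is of smaller order under Assumption~\ref{as: rate}(b). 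Hence
\[
\|\hat f_X-\hat f_X^*\|_{\R} \;=\; O_{\Pr}\bigl(h_n^{-\alpha}\sqrt{\log h_n^{-1}/m}\bigr),
\]
and since Assumption~\ref{as: rate}(b) forces $m\gg n h_n\log h_n^{-1}$, this is $o$ of the oracle rate. Combining the two bounds gives the corollary.

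The main obstacle is the sharp control of the second piece. A brute-force estimate $\|\hat f_X-\hat f_X^*\|_{\R}\le h_n^{-1}\|\hat K_n-K_n\|_\infty$ produces an extra $h_n^{-\alpha-1}$ factor, essentially reproducing the \cite{DaReTr16} bound that is flagged as insufficient. The crucial move is to remain in the Fourier domain long enough to exploit the cancellation $\varphi_Y/\varphi_\varepsilon=\varphi_X$ and the integrability of $\varphi_X$ from Assumption~\ref{as: standard} before passing to the supremum over $x$; this is the ``novel idea'' foreshadowed before Theorem~\ref{thm: Gaussian approximation}, and it is what permits the estimation-error contribution to fall safely below the oracle rate under the bandwidth conditions of Assumption~\ref{as: rate}.
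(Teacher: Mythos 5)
Your decomposition and overall strategy match the paper's: split $\hat f_X - \Ep[\hat f_X^*]$ into the oracle fluctuation $\hat f_X^* - \Ep[\hat f_X^*]$ (handled by a VC-type empirical process moment bound, exactly as in the paper's appeal to Corollary 5.1 of \cite{ChChKa14a} with envelope $h_n^{-\alpha}$ and weak variance $h_n^{-2\alpha+1}$) and the plug-in error $\hat f_X - \hat f_X^*$ (handled in the Fourier domain by exploiting the cancellation $\varphi_Y/\varphi_\varepsilon=\varphi_X$ and the integrability of $\varphi_X$). Your identification of the latter cancellation as the crucial move is exactly what the paper emphasizes and what defeats the crude $\|\hat K_n-K_n\|_\R$ bound of \cite{DaReTr16}.

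Where you differ from the paper is in the technical execution of the plug-in error bound. You bound both $\hat\varphi_\varepsilon-\varphi_\varepsilon$ and $\hat\varphi_Y-\varphi_Y$ in sup-norm over $[-h_n^{-1},h_n^{-1}]$ via Lemma \ref{lem: rate ecf} and integrate trivially. The paper instead applies Cauchy--Schwarz in $t$, so that the leading piece is controlled by $\int_{-h_n^{-1}}^{h_n^{-1}}\E|\varphi_\varepsilon-\hat\varphi_\varepsilon|^2|\varphi_X|\,dt=O(m^{-1})$ and the cross (remainder) piece by a product of $L^2$ bounds, yielding $\|\hat f_X-\hat f_X^*\|_\R = O_\Pr(h_n^{-\alpha}m^{-1/2}) + O_\Pr(h_n^{-2\alpha-1}(nm)^{-1/2})$, with no logarithmic losses. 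Your sup-norm route picks up extra $\log h_n^{-1}$ factors (and one more: Lemma \ref{lem: rate ecf} actually gives $O_\Pr\{m^{-1/2}(\log h_n^{-1})^{1/2+\delta}\}$ for $\delta>0$, not $O_\Pr\{m^{-1/2}(\log h_n^{-1})^{1/2}\}$ as you state), so your intermediate bound for $\|\hat f_X-\hat f_X^*\|_\R$ is a little weaker than the paper's. Under Assumption \ref{as: rate}(b) this loss is absorbed, so your proof is valid and reaches the same conclusion; the paper's $L^2$ version is simply sharper and, importantly, is needed elsewhere (Theorem \ref{thm: Gaussian approximation}) to push the plug-in error below $h_n^{-\alpha}(nh_n)^{-1/2}(\log h_n^{-1})^{-1/2}$, which your logarithmically lossy bound would not quite deliver without strengthening \ref{as: rate}(b). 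For the present corollary, however, either route works.
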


Corollary \ref{thm: uniform convergence rate} does not take into account the bias $\| \Ep [\hat{f}^{*}_{X}(\cdot)] - f_{X}(\cdot) \|_{\R}$, but the above rate is the correct one for the ``variance part'' (or the ``stochastic part'') when $f_{\varepsilon}$ is known. 
To decide uniform convergence  rates for $\hat{f}_{X}$, we have to make an assumption on the smoothness of $f_{X}$. 
In the following, for $\beta > 0$ and $B > 0$, let $\Sigma (\beta,B)$ denote a H\"{o}lder ball of functions on $\R$ with smoothness $\beta$ and radius $B$, namely,
\begin{align*}
\Sigma (\beta, B) = \{ f: \R \to \R &: \text{$f$ is $k$-times differentiable and} \\
&\quad | f^{(k)} (x) - f^{(k)} (y) | \leq B|x-y|^{\beta-k} \ \forall x,y \in \R \},
\end{align*}
where $k$ is the integer such that $k < \beta \leq k+1$ ($k=0$ if $\beta \in (0,1]$). Further, we will assume that the kernel function $K$ is such that 
\begin{equation}
\label{eq: higher order kernel}
\int_{\R} |x|^{k+1} |K(x)| dx < \infty, \ \text{and} \ \int_{\R} x^{\ell} K(x) dx, \ \ell=1,\dots,k,
\end{equation}
i.e., $K$ is a $(k+1)$-th order kernel. 
For any positive sequences $a_{n}, b_{n}$, we write $a_{n} \ll b_{n}$ if $a_{n}/b_{n} \to 0$ as $n \to \infty$. 

\begin{corollary}
\label{cor: uniform convergence rate}
Suppose that Assumptions \ref{as: standard}--\ref{as: ill-posedness} and \ref{as: moment condition} are satisfied. Further, suppose that $f_{X} \in \Sigma (\beta,B)$ for some $\beta > 1/2, B > 0$, and that Condition (\ref{eq: higher order kernel}) is satisfied for the kernel function $K$,  where $k$ is the integer such that $k < \beta \leq k+1$. Take $h_{n} = C (n/\log n)^{-1/(2\alpha + 2\beta+1)}$ for any constant $C>0$; then 
\[
\| \hat{f}_{X} - f_{X} \|_{\R} = O_{\Pr}  \{ (n/\log n)^{-\beta/(2\alpha + 2\beta+1)}  \}
\]
 provided that $m \gg n^{\frac{2\alpha+2\beta}{2\alpha+2\beta+1}} (\log n)^{1+\frac{1}{2\alpha+2\beta+1}}$. 
\end{corollary}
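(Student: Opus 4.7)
The plan is to decompose the error into stochastic and bias parts, bound each using the prescribed bandwidth, and verify that the stated growth condition on $m$ is enough to apply Corollary~\ref{thm: uniform convergence rate}. Writing
\[
\hat{f}_{X}(x) - f_{X}(x) = \{\hat{f}_{X}(x) - \Ep[\hat{f}_{X}^{*}(x)]\} + \{\Ep[\hat{f}_{X}^{*}(x)] - f_{X}(x)\},
\]
I would first appeal directly to Corollary~\ref{thm: uniform convergence rate}, which yields $\|\hat{f}_{X} - \Ep[\hat{f}_{X}^{*}]\|_{\R} = O_{\Pr}\{h_{n}^{-\alpha}(nh_{n})^{-1/2}\sqrt{\log h_{n}^{-1}}\}$, once Assumption~\ref{as: rate} is verified. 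Plugging $h_{n}\asymp(n/\log n)^{-1/(2\alpha+2\beta+1)}$ and $\log h_{n}^{-1}\asymp \log n$ into this bound and collecting exponents gives $n$-exponent $-\beta/(2\alpha+2\beta+1)$ and $\log n$-exponent $\beta/(2\alpha+2\beta+1)$, i.e.\ the claimed rate $(n/\log n)^{-\beta/(2\alpha+2\beta+1)}$.

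For the bias, the key step is a Fourier computation using (\ref{eq: subindependence}):
\[
\Ep[\hat{f}_{X}^{*}(x)] = \frac{1}{2\pi}\int_{\R} e^{-itx}\varphi_{K}(th_{n})\frac{\varphi_{Y}(t)}{\varphi_{\varepsilon}(t)}\,dt = \frac{1}{2\pi}\int_{\R}e^{-itx}\varphi_{K}(th_{n})\varphi_{X}(t)\,dt = (K_{h_{n}} * f_{X})(x),
\]
with $K_{h_{n}}(u) = h_{n}^{-1}K(u/h_{n})$. Thus $\Ep[\hat{f}_{X}^{*}(x)] - f_{X}(x) = \int K(u)\{f_{X}(x-uh_{n}) - f_{X}(x)\}\,du$, and a standard Taylor expansion to order $k$ together with the $(k+1)$-th order vanishing moments in (\ref{eq: higher order kernel}) and the H\"{o}lder estimate $|f_{X}^{(k)}(x-uh_{n}) - f_{X}^{(k)}(x)|\leq B|uh_{n}|^{\beta-k}$ gives $\|\Ep[\hat{f}_{X}^{*}] - f_{X}\|_{\R} = O(h_{n}^{\beta})$. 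This evaluates to the same order $(n/\log n)^{-\beta/(2\alpha+2\beta+1)}$ at the prescribed bandwidth, so the two parts balance.

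The only remaining work is to check that the stated hypotheses imply Assumption~\ref{as: rate}. Condition~(a), $(\log h_{n}^{-1})^{2}/(nh_{n}^{2})\to 0$, reduces after substitution to requiring $2\alpha + 2\beta > 1$, which is granted by $\beta > 1/2$ and $\alpha > 0$. Condition~(b) asks that $m$ dominate $nh_{n}\log h_{n}^{-1}$ and $(\log h_{n}^{-1})^{2}h_{n}^{-2\alpha-2}$, and a direct exponent calculation shows that the former equals $n^{(2\alpha+2\beta)/(2\alpha+2\beta+1)}(\log n)^{1+1/(2\alpha+2\beta+1)}$ up to constants, matching the stated lower bound on $m$; the latter is controlled by the same expression (up to lower-order log factors). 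I expect the main obstacle to be purely bookkeeping: balancing the stochastic and bias rates at the MISE-optimal bandwidth while simultaneously maintaining all parts of Assumption~\ref{as: rate}(b), which forces the auxiliary sample $m$ to grow almost as fast as $n$ itself. Everything else is routine Fourier inversion plus a classical higher-order-kernel bias estimate.
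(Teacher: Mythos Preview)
Your proposal is correct and follows essentially the same route as the paper: decompose into stochastic and bias parts, invoke Corollary~\ref{thm: uniform convergence rate} for the former, and use the Fourier identity $\Ep[\hat{f}_{X}^{*}(x)] = (K_{h_{n}}*f_{X})(x)$ together with the higher-order kernel Taylor expansion to get the $O(h_{n}^{\beta})$ bias bound. The paper's own proof is in fact terser than yours---it does not spell out the verification of Assumption~\ref{as: rate} at the chosen bandwidth---so your explicit exponent bookkeeping is a welcome addition rather than a deviation.

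One small caution on that bookkeeping: your claim that the second term in Assumption~\ref{as: rate}(b), namely $(\log h_{n}^{-1})^{2}h_{n}^{-2\alpha-2}$, is ``controlled by the same expression'' as $nh_{n}\log h_{n}^{-1}$ is only clean when $\beta\geq 1$; for $1/2<\beta<1$ the $n$-exponent $(2\alpha+2)/(2\alpha+2\beta+1)$ of the second term actually exceeds $(2\alpha+2\beta)/(2\alpha+2\beta+1)$. This is a wrinkle in the stated lower bound on $m$ rather than in your argument, and the paper's proof does not address it either.
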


\begin{remark}[On Condition (\ref{eq: higher order kernel})]
Condition (\ref{eq: higher order kernel}) on the kernel function $K$ can be verified through its Fourier transform $\varphi_{K}$. In fact, it is not difficult to see that, if $\varphi_{K}$ is $(k+3)$-times continuously differentiable and $\varphi_{K}^{(\ell)}(0) = 0$ for $\ell=1,\dots,k$, then Condition (\ref{eq: higher order kernel}) is satisfied. 
\end{remark}

\begin{remark}
Informally, for a given error density $f_{\varepsilon}$ such that $| \varphi_{\varepsilon}(t) |$ decays like $|t|^{-\alpha}$ as $|t| \to \infty$, $(n/\log n)^{-\beta/(2\alpha + 2\beta+1)}$ is the minimax rate of convergence for estimating $f_{X}$ under the sup-norm loss when $f_{X} \in \Sigma(\beta,B)$ and there is no additional sample from the error distribution. 
See Theorem 1 in \cite{LoNi11} for the precise formulation. In fact, the proof of Theorem 1 in \cite{LoNi11} continues to hold even when there is a sample $(\eta_{1},\dots,\eta_{m})$ from the error distribution that is independent from $(Y_{1},\dots,Y_{n})$ -- in their proof, modify $P_{k}^{n}$ to be the distribution admitting the joint density $\prod_{j=1}^{n} (f_{k}*f_{\varepsilon})(y_{j}) \prod_{k=1}^{m} f_{\varepsilon}(\eta_{k})$. 
%So, for a given error density $f_{\varepsilon}$ satisfying Assumption \ref{as: ill-posedness} and for given constants $\beta, B > 0$, $(n/\log n)^{-\beta/(2\alpha + 2\beta+1)}$ gives a lower bound on the minimax rate under the sup-norm loss for the class of joint distributions of $(Y,\eta)$ such that the marginal densities of $Y$ and $\eta$ are given by $f_{X} * f_{\varepsilon}$ and $f_{\varepsilon}$, respectively, where $f_{X}$ is allowed to vary within $\Sigma (\beta,B)$.
Hence Corollary \ref{cor: uniform convergence rate} shows that $\hat{f}_{X}$ attains the minimax rate under the sup-norm loss for $\beta > 1/2$, provided that other technical conditions are satisfied.
\end{remark}

\begin{remark}
The literature on uniform convergence rates in deconvolution is limited. 
\cite{LoNi11} and \citet[][Section 5.3.2]{GiNi16} derive uniform convergence rates for deconvolution wavelet and kernel density estimators on the entire real line assuming that the error density is known; \citet[][Proposition 2.6]{DaReTr16} derive uniform convergence rates for the deconvolution kernel density estimator with the estimated error characteristic function, but on a bounded interval. So their results do not cover the above corollaries.
\end{remark}

Now, we present the validity of the proposed multiplier bootstrap confidence bands.

\begin{theorem}
\label{thm: multiplier bootstrap}
Suppose that Assumptions \ref{as: standard}--\ref{as: rate} are satisfied.
As $n \to \infty$, 
\begin{equation}
\sup_{z \in \R}  | \Pr \{ \| \hat{Z}_{n}^{\xi} \|_{I} \leq z \mid \mathcal{D}_{n} \} - \Pr \{ \| Z_{n}^{G} \|_{I} \leq z \} | \stackrel{\Pr}{\to} 0, \label{eq: bootstrap}
\end{equation}
where $\mathcal{D}_{n}= \{ Y_{1},\dots,Y_{n},\eta_{1},\dots,\eta_{m} \}$, and $Z_{n}^{G}$ is the Gaussian random variable in $\ell^{\infty}(I)$ given in Theorem \ref{thm: Gaussian approximation}. 
Therefore, letting $\hat{c}_{n}(1-\tau)$ denote the $(1-\tau)$-quantile of the conditional distribution of $\| \hat{Z}_{n}^{\xi} \|_{I}$ given $\mathcal{D}_{n}$, we have that
\begin{equation}
\Pr \{ \| \hat{Z}_{n} \|_{I} \leq \hat{c}_{n}(1-\tau) \} \to 1-\tau \label{eq: validity of MB}
\end{equation}
as $n \to \infty$. Finally, the supremum width of the band $\hat{\mathcal{C}}_{n}$ is $O_{\Pr} \{ h_{n}^{-\alpha}(nh_{n})^{-1/2}\sqrt{\log h_{n}^{-1}} \}$. 
\end{theorem}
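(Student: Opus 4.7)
The proof has three deliverables: the Kolmogorov approximation \eqref{eq: bootstrap}, the coverage statement \eqref{eq: validity of MB}, and the width bound. The core work is \eqref{eq: bootstrap}; the other two follow by combining it with Theorem \ref{thm: Gaussian approximation} and with Gaussian concentration. The overall strategy is the standard one for multiplier-bootstrap validation via Gaussian comparison, but adapted to the deconvolution kernel $\hat K_n$ that is itself estimated.

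First I would reduce \eqref{eq: bootstrap} to a uniform covariance-comparison problem. Conditionally on $\mathcal{D}_n$, the process $\hat Z_n^{\xi}$ is centered Gaussian in $\ell^\infty(I)$ with conditional covariance
\[
\widehat{\mathcal R}_n(x,y) \;=\; \frac{1}{n\,\hat\sigma_n(x)\hat\sigma_n(y)} \sum_{j=1}^n \Bigl(\hat K_n\!\bigl((x-Y_j)/h_n\bigr) - \bar{\hat K}_n(x)\Bigr)\Bigl(\hat K_n\!\bigl((y-Y_j)/h_n\bigr) - \bar{\hat K}_n(y)\Bigr),
\]
while $Z_n^G$ has covariance $\mathcal R_n(x,y) = \Cov(Z_n^*(x), Z_n^*(y))$. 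By the Gaussian comparison lemma of \cite{ChChKa14b} (applied conditionally on $\mathcal D_n$), it suffices to show
\[
\Delta_n \;:=\; \sup_{x,y \in I} \bigl| \widehat{\mathcal R}_n(x,y) - \mathcal R_n(x,y) \bigr| \;\stackrel{\Pr}{\to}\; 0
\]
at a rate fast enough that $\Delta_n \log^{2}\!h_n^{-1}\to 0$ (the polylog factor comes from the entropy of $I$ under the intrinsic semi-metric, which is of polynomial type). This is the hard step. I would decompose the difference into three error terms: (a) replacement of $\hat\sigma_n$ by $\sigma_n$ in the normalisers; (b) replacement of $\hat K_n$ by $K_n$ inside the empirical sum; (c) the empirical-vs-population covariance gap $(P_n - P)\bigl[K_n(\cdot - Y/h_n) K_n(\cdot - Y/h_n) / (\sigma_n \sigma_n)\bigr]$. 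Term (c) is a uniform empirical-process deviation controlled via a maximal inequality over the VC-type class $\{K_n((x-\cdot)/h_n) : x \in I\}$ (the envelope is $\|K_n\|_\infty = O(h_n^{-\alpha})$ by Assumption \ref{as: ill-posedness}, and $\sigma_n^2\asymp h_n^{-2\alpha+1}$ by Lemma \ref{lem: variance}, as in the proof of Theorem \ref{thm: Gaussian approximation}). Terms (a) and (b) are where the unknown error density matters: they reduce to a uniform bound on $\|\hat K_n - K_n\|_\infty$ and on $\|\hat\sigma_n - \sigma_n\|_I$, both of which follow from the uniform rate on $\hat\varphi_\varepsilon$ in Lemma \ref{lem: rate ecf} together with the refined estimation-error argument for $\hat f_X - \hat f_X^*$ already developed in Step 2 of the proof of Theorem \ref{thm: Gaussian approximation}. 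Assumption \ref{as: rate}(b) is exactly what is needed to make (a) and (b) asymptotically negligible relative to the polylog factor.

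Given \eqref{eq: bootstrap}, the coverage statement \eqref{eq: validity of MB} follows from a standard quantile inversion: the Gaussian anti-concentration inequality of \cite{ChChKa14a} states that $z\mapsto \Pr\{\|Z_n^G\|_I\leq z\}$ has no jumps larger than $C(\log h_n^{-1})^{1/2}$ times the Kolmogorov gap, so combining Theorem \ref{thm: Gaussian approximation} and \eqref{eq: bootstrap} yields $\Pr\{\|\hat Z_n\|_I \leq \hat c_n(1-\tau)\} - (1-\tau) \to 0$ (this is the standard conversion of Kolmogorov convergence of bootstrap law into coverage). For the width, by construction $2\hat\sigma_n(x)\hat c_n(1-\tau)/(\sqrt n h_n)$. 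Lemma \ref{lem: variance} and the bound $\|\hat\sigma_n-\sigma_n\|_I=o_\Pr(h_n^{-\alpha+1/2})$ from the covariance analysis give $\sup_{x\in I}\hat\sigma_n(x) = O_\Pr(h_n^{-\alpha+1/2})$. For $\hat c_n(1-\tau)$, I apply \eqref{eq: bootstrap} to transfer the problem to $\|Z_n^G\|_I$ and then use the Borell--Sudakov--Tsirelson inequality together with the standard bound on $\E[\|Z_n^G\|_I]$ in terms of the metric entropy of $I$; since the intrinsic semi-metric of $Z_n^G$ is Hölder in $|x-y|$ with a polynomial-in-$h_n^{-1}$ constant, this gives $\hat c_n(1-\tau) = O_\Pr(\sqrt{\log h_n^{-1}})$. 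Multiplying yields the claimed width $O_\Pr\{h_n^{-\alpha}(nh_n)^{-1/2}\sqrt{\log h_n^{-1}}\}$.

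The main obstacle is the covariance comparison $\Delta_n\to 0$: the product structure $\hat K_n(\cdot)\hat K_n(\cdot)$ inflates the sup-norm of the summands to order $h_n^{-2\alpha}$, so the empirical-process term (c) is on the edge of usable rates, and the replacement errors (a)--(b) must use the sharpened bound on $\hat K_n - K_n$ rather than the cruder rate $O_\Pr\{h_n^{-\alpha}(mh_n)^{-1/2}\}$ of \cite{DaReTr16}; this is precisely where the refinement introduced in Step 2 of the proof of Theorem \ref{thm: Gaussian approximation} does the work.
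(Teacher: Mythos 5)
Your strategy is conceptually sound and reaches the right conclusion, but it takes a genuinely different route from the paper's proof in the first (and main) step. The paper proves \eqref{eq: bootstrap} by applying the \emph{coupling} theorem of \cite{ChChKa16} (their Theorem~2.2) to the idealized multiplier process $Z_n^\xi$ built with the exact $K_n$ and $\sigma_n$; this theorem directly produces, conditionally on $\mathcal D_n$, a random variable $W_n^\xi$ distributed exactly as $\|Z_n^G\|_I$ together with the coupling bound $\bigl|\|Z_n^\xi\|_I - W_n^\xi\bigr| = o_\Pr\{(\log h_n^{-1})^{-1/2}\}$, which combined with anti-concentration \eqref{eq: AC bound} immediately gives the conditional Kolmogorov approximation. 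The paper's Step~2 then shows $\|\hat Z_n^\xi - Z_n^\xi\|_I = o_\Pr\{(\log h_n^{-1})^{-1/2}\}$, which is precisely your error terms (a) and (b). Your approach, by contrast, exploits the conditional Gaussianity of $\hat Z_n^\xi$ and attempts a Gaussian--Gaussian comparison in the supremum covariance distance $\Delta_n$; this is legitimate (it is the route taken by \cite{ChChKa14b} in the direct-observation kernel density setting), but it forces you to control the extra term (c), the empirical-versus-population covariance gap over the product class indexed by $I\times I$, which the coupling approach handles internally and the paper never has to argue. Term (c) is nontrivial: the relevant envelope is $\|K_n\|_\R^2/\inf_I\sigma_n^2 = O(h_n^{-1})$, and you must verify that the maximal-inequality rate together with the polylog losses from the Gaussian comparison inequality and the entropy of $I$ still vanishes, which is precisely where your remark ``on the edge of usable rates'' lives. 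You also elide the discretisation/continuity step needed because the comparison inequality is stated for finite-dimensional maxima, whereas $I$ is a continuum; the paper's tight-Gaussian-element machinery absorbs this. Your arguments for \eqref{eq: validity of MB} (quantile inversion via anti-concentration) and the width bound (Borell--Sudakov--Tsirelson plus Dudley's entropy bound on $\E\|Z_n^G\|_I$, plus $\sup_I\hat\sigma_n = O_\Pr(h_n^{-\alpha+1/2})$) match the paper's Step~3 essentially verbatim.
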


Theorem \ref{thm: multiplier bootstrap} shows that the multiplier bootstrap confidence band $\hat{\mathcal{C}}_{n}$ defined in (\ref{eq: multiplier bootstrap confidence band}) contains the surrogate function $\Ep[\hat{f}_{X}^{*}(\cdot)]$ on $I$ with probability  $1-\tau+o(1)$ as $n \to \infty$. 
If $f_{X}$ belongs to a H\"{o}lder ball $\Sigma (\beta,B)$, then $\hat{\mathcal{C}}_{n}$ will be a valid confidence band for $f_{X}$ provided that $h_{n}$ is chosen in such a way that $h_{n}^{\alpha+\beta}\sqrt{n h_{n} \log h_{n}^{-1}} \to 0$, which corresponds to choosing undersmoothing bandwidths. 

\begin{corollary}
\label{cor: multiplier bootstrap}
Suppose that Assumptions \ref{as: standard}--\ref{as: rate} are satisfied. 
Furthermore, suppose that $f_{X} \in \Sigma (\beta,B)$ for some $\beta > 0, B > 0$, and that Condition (\ref{eq: higher order kernel}) is satisfied for the kernel function $K$ where $k$ is the integer such that $k < \beta \leq k+1$. Consider the multiplier bootstrap confidence band $\hat{\mathcal{C}}_{n}$ defined in (\ref{eq: multiplier bootstrap confidence band}). 
Then, as $n \to \infty$, $\Pr \{ f_{X}(x) \in \hat{\mathcal{C}}_{n}(x) \ \forall x \in I \} \to 1-\tau$,
provided that
\begin{equation}
h_{n}^{\alpha+\beta}  \sqrt{n h_{n} \log h_{n}^{-1}} \to 0. \label{eq: undersmoothing}
\end{equation}
\end{corollary}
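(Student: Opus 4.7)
The strategy is to combine the Gaussian multiplier bootstrap validity from Theorem \ref{thm: multiplier bootstrap} with a uniform bias bound to replace the ``surrogate'' function $\Ep[\hat f_X^*(\cdot)]$ by the target $f_X(\cdot)$. By the triangle inequality,
\[
\Pr\{f_X(x)\in \hat{\mathcal C}_n(x)\ \forall x\in I\} \geq \Pr\Big\{\|\hat Z_n\|_I \leq \hat c_n(1-\tau) - \tfrac{\sqrt{n}h_n}{\hat\sigma_n(\cdot)}\|f_X-\Ep[\hat f_X^*]\|_I\Big\},
\]
so it suffices to show that the bias contribution is asymptotically negligible compared with the random threshold $\hat c_n(1-\tau)$.

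\textbf{Step 1: Bias control.} First I would show $\|\Ep[\hat f_X^*(\cdot)] - f_X(\cdot)\|_\R = O(h_n^\beta)$. A short Fourier computation gives $\Ep[\hat f_X^*(x)] = (f_X * K_{h_n})(x)$, where $K_{h_n}(u) = h_n^{-1}K(u/h_n)$; indeed the change of variables $s = t/h_n$ in the definition of $K_n$ and the identity $\varphi_Y = \varphi_X\varphi_\varepsilon$ eliminate $\varphi_\varepsilon$ from the integrand. Since $K$ is a $(k+1)$-th order kernel by Condition (\ref{eq: higher order kernel}), a standard Taylor expansion of $f_X$ around $x$ combined with the H\"older assumption $f_X \in \Sigma(\beta,B)$ and the vanishing moments $\int x^\ell K(x)\,dx = 0$ for $\ell=1,\dots,k$ yields the uniform bound $\|(f_X * K_{h_n}) - f_X\|_\R = O(h_n^\beta)$.

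\textbf{Step 2: Comparison with the bandwidth.} From Theorem \ref{thm: multiplier bootstrap}, the half-width of $\hat{\mathcal C}_n$ is $\hat\sigma_n(x) \hat c_n(1-\tau)/(\sqrt{n}h_n) = O_\Pr\{h_n^{-\alpha}(nh_n)^{-1/2}\sqrt{\log h_n^{-1}}\}$ uniformly on $I$. Equivalently, using Lemma \ref{lem: variance} (applied in Theorem \ref{thm: multiplier bootstrap}) that $\sigma_n(x)$ is of order $h_n^{-\alpha+1/2}$ uniformly on $I$ and that $\hat\sigma_n/\sigma_n \to 1$ uniformly on $I$ in probability (established during the proof of Theorem \ref{thm: Gaussian approximation}), the ratio of bias to half-width is
\[
\frac{\sqrt{n}h_n\,\|f_X - \Ep[\hat f_X^*]\|_I}{\hat\sigma_n(x)\hat c_n(1-\tau)} = O_\Pr\!\left( \frac{h_n^{\alpha+\beta}\sqrt{nh_n}}{\sqrt{\log h_n^{-1}}}\right),
\]
which tends to $0$ under (\ref{eq: undersmoothing}) (in fact (\ref{eq: undersmoothing}) is slightly stronger than needed since it carries $\sqrt{\log h_n^{-1}}$ in the numerator, which absorbs the $1/\sqrt{\log h_n^{-1}}$ factor comfortably).

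\textbf{Step 3: Conclusion via anti-concentration.} Combining Steps 1 and 2, $\|f_X - \Ep[\hat f_X^*]\|_I \cdot \sqrt{n}h_n/\hat\sigma_n(\cdot)$ is $o_\Pr(1)$ uniformly, hence by the triangle inequality the event $\{f_X(x) \in \hat{\mathcal C}_n(x)\ \forall x \in I\}$ is implied, up to probability $o(1)$, by $\{\|\hat Z_n\|_I \leq \hat c_n(1-\tau) - \delta_n\}$ with $\delta_n = o_\Pr(1)$. Applying the Gaussian anti-concentration inequality to $\|Z_n^G\|_I$ (a standard ingredient of the bootstrap machinery of \cite{ChChKa14b}) in conjunction with Theorem \ref{thm: Gaussian approximation} and (\ref{eq: bootstrap}), the $o_\Pr(1)$ perturbation of the threshold does not affect the limiting coverage, so $\Pr\{f_X(x) \in \hat{\mathcal C}_n(x)\ \forall x\in I\} \to 1-\tau$.

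\textbf{Main obstacle.} The only non-routine ingredient is verifying that $\hat c_n(1-\tau)$ is of order $\sqrt{\log h_n^{-1}}$ at the rate used in comparing bias to width; this follows from Theorem \ref{thm: multiplier bootstrap}'s width bound together with uniform consistency of $\hat\sigma_n$, but one must also ensure that shifting the quantile by an $o_\Pr(1)$ amount preserves the coverage level — this is where Gaussian anti-concentration (in the form used throughout \cite{ChChKa14b}) is essential, since $\|Z_n^G\|_I$ need not have a non-degenerate limit law without further structural assumptions.
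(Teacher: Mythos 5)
Your overall strategy — bound the bias by $O(h_n^\beta)$, show it is negligible after scaling by $\sqrt{n}h_n/\hat\sigma_n$, then invoke Gaussian anti-concentration — is the same strategy the paper uses (the paper does this by showing $\sqrt{n}h_n(\hat f_X - f_X)/\hat\sigma_n = \tilde Z_n + o_\Pr\{(\log h_n^{-1})^{-1/2}\}$ and then appealing to Step 2 of Theorem~\ref{thm: Gaussian approximation} and Step 3 of Theorem~\ref{thm: multiplier bootstrap}). Steps 1 and 2 of your proposal are correct.

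However, Step 3 as written contains a real error: you claim that an ``$o_\Pr(1)$ perturbation of the threshold does not affect the limiting coverage'' by anti-concentration. That is false for a sequence of Gaussian suprema whose magnitude diverges. The anti-concentration bound (\ref{eq: AC bound}) gives
\[
\Pr\{ z - \Delta \le \|Z_n^G\|_I \le z + \Delta \} \lesssim \Delta\,\Ep\|Z_n^G\|_I \lesssim \Delta\sqrt{\log h_n^{-1}},
\]
so a threshold shift of size $\Delta$ is harmless only when $\Delta\sqrt{\log h_n^{-1}} \to 0$, i.e. $\Delta = o\{(\log h_n^{-1})^{-1/2}\}$. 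Merely $\Delta = o(1)$ does not suffice, since $\Ep\|Z_n^G\|_I \asymp \sqrt{\log h_n^{-1}} \to \infty$. The required rate is exactly why the undersmoothing condition (\ref{eq: undersmoothing}) is written as $h_n^{\alpha+\beta}\sqrt{nh_n \log h_n^{-1}} \to 0$ rather than the weaker $h_n^{\alpha+\beta}\sqrt{nh_n} \to 0$: it guarantees $\sqrt{n}h_n\|f_X - \Ep[\hat f_X^*]\|_I/\hat\sigma_n = O_\Pr(h_n^{\alpha+\beta}\sqrt{nh_n}) = o_\Pr\{(\log h_n^{-1})^{-1/2}\}$, which is what anti-concentration actually needs. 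You actually establish this stronger rate in Step 2 (your ratio-to-halfwidth computation carries the necessary $\sqrt{\log h_n^{-1}}$ factor), so the fix is to carry that rate explicitly into Step 3 instead of collapsing it to $o_\Pr(1)$ — but as currently stated, the final inference does not follow from what you invoke.
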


Consider the canonical case where $m=n$. Then the conditions on the bandwidth $h_{n}$ in Corollary \ref{cor: multiplier bootstrap} reduce to 
\[
\frac{(\log h_{n}^{-1})^{2}}{nh_{n}^{2\alpha+2}}  \bigvee h_{n}^{\alpha+\beta}  \sqrt{n h_{n} \log h_{n}^{-1}} \to 0,
\]
and so we need $\beta > 1/2$ in order to ensure the existence of bandwidths satisfying these conditions. For example, if $\beta > 1/2$, choosing $h_{n} = v_{n} (n/\log n)^{-1/(2\alpha+2\beta+1)}$ for $v_{n} \sim (\log n)^{-1}$ satisfies the above restriction and yields that the supremum width of the band $\hat{\mathcal{C}}_{n}$ is  
\[
O_{\Pr} \{ (n/\log n)^{-\beta/(2\alpha+2\beta+1)}(\log n)^{\alpha+1/2} \},
\]
which is close to the optimal rate up to $\log n$ factors. 

\begin{remark}[On undersmoothing]
In the present paper, we assume undersmoothing bandwidths so that the deterministic  bias is asymptotically negligible relative to the ``variance'' or ``stochastic'' term. An alternative approach is to estimate the bias at each point, and construct a bias correct confidence band; see \cite{EuSp93} and \cite{Xi98} for bias corrected confidence bands in a regression context. See also \cite{HaHo13}, \cite{ChChKa14b}, \cite{Sc15}, \cite{ArKo14}, and \cite{CaCaFa15} for recent discussions including yet alternative approaches on selection of bandwidths for confidence intervals or bands. 
These papers do not formally cover the case of deconvolution, and formally adapting such approaches to deconvolution is beyond the scope of this paper. For practical choice of the bandwidth, see Section \ref{subsec: choice of the bandwidth}.
\end{remark}

\begin{remark}[Comparisons with \cite{BiDuHoMu07} and \cite{ScMuDu13}]
\label{rem: comparison}
\cite{BiDuHoMu07} is an important pioneering work on confidence bands in deconvolution. They assume that the error density is known and ordinary smooth, and show that 
\[
\sqrt{2\log h_{n}^{-1}} (\| \sqrt{n}h_{n}^{\alpha+1/2} (\hat{f}_{X}^{*}(\cdot) - \Ep[\hat{f}_{X}^{*}(\cdot)])/\sqrt{f_{Y}(\cdot)} \|_{[0,1]}/C_{K,1}^{1/2} - d_{n} )
\]
converges in distribution to a Gumbel distribution, where $d_{n}=\sqrt{2\log h_{n}^{-1}} + \frac{\log (C_{K,2}^{1/2} /2\pi)}{\sqrt{2\log(1/h_{n})}}$,
and $C_{K,1},C_{K,2}$ are numerical constants that depend only on $K$; see \cite{BiDuHoMu07} for their explicit values. 
Furthermore, they show the validity of the nonparametric bootstrap for approximating the distribution of $\| \sqrt{n}h_{n}^{\alpha+1/2}(\hat{f}_{X}^{*} (\cdot) - \Ep[\hat{f}_{X}^{*}(\cdot)])/\sqrt{f_{Y}(\cdot)} \|_{[0,1]}$; see their Theorem 2. 

Since we work with a different setting from that of \cite{BiDuHoMu07} in the sense that we allow $f_{\varepsilon}$ to be unknown and an auxiliary sample from $f_{\varepsilon}$ is available, the regularity conditions in the present paper are not directly comparable to those of \cite{BiDuHoMu07}. 
However, it is worthwhile pointing out that conditions on the error characteristic function are significantly relaxed in the present paper. Indeed, their Assumption 2 is substantially more restrictive than our Assumption \ref{as: ill-posedness}. 
The reasons that they require their Assumption 2 are that: 1) they use the KMT strong approximation \citep{KoMaTu75} to the  empirical process $[0,1] \ni x \mapsto \sqrt{n}h_{n}^{\alpha+1/2} (\hat{f}_{X}^{*}(x) - \Ep[\hat{f}_{X}^{*}(x)])/\sqrt{f_{Y}(x)}$, for which a bound on the total variation of  $K_{n}$ is needed, and their Assumption 2 (a) plays that role; and 2) their analysis relies on the Gumbel approximation, for which they require further approximations based on the extreme value theory \citep[cf][]{LeLiRo83} beyond the KMT approximation, and consequently require some extra assumptions, namely, their Assumption 2 (b). 

In the present paper, we build upon the intermediate Gaussian and multiplier bootstrap approximation theorems  developed in \cite{ChChKa14a, ChChKa14b, ChChKa16}, and regularity conditions needed to apply those techniques are typically much weaker than those for the KMT and Gumbel approximations. In particular, we do not need a bound on the total variation of $K_{n}$; instead, we need that the class of functions $\{ y \mapsto K_{n}((x-y)/h_{n}) : x \in I \}$ is of Vapnik-Chervonenkis type, and to that end, thanks to Lemma 1 in \cite{GiNi09}, it is enough to prove that $K_{n}$ has a bounded \textit{quadratic variation} of order $h_{n}^{-2\alpha}$, which is ensured by our Assumptions \ref{as: kernel} and \ref{as: ill-posedness} (see Lemmas \ref{lem: VC type} and \ref{lem: quadratic variation} ahead). 
In addition, in contrast to \cite{BiDuHoMu07}, we do not need that $\sigma_{n}^{2}(x)/h_{n}^{-2\alpha+1}$ has a fixed limit; we only need that $\sigma_{n}^{2}(x)/h_{n}^{-2\alpha+1}$ is bounded away from zero uniformly in $x \in I$. 

Furthermore, the intermediate Gaussian and multiplier bootstrap approximations apply not only to the ordinary smooth case, but also to the super-smooth case, as discussed in Section \ref{sec: supersmooth}, and so they enable us to study confidence bands for $f_{X}$ in a unified way (although in the super-smooth case we require $m/n \to \infty$). On the other hand, as shown in \cite{EsGu08}, the Gumbel approximation does not hold for the super-smooth case in general (see also Remark \ref{rem: van Es} ahead). 

\cite{ScMuDu13}, assuming that the error density is known and ordinary smooth, develop methods to make inference on shape constraints for $f_{X}$, which also cover a construction of confidence bands (although their main interest is not in confidence bands). 
They use an intermediate Gaussian approximation different from ours based on the KMT approximation, and are able to relax assumptions in \cite{BiDuHoMu07}. Still, our conditions on the error characteristic function are weaker than theirs, since they further require that $\varphi_{\varepsilon}$ is twice differentiable and $|\varphi_{\varepsilon}''(t)|$ decays like $|t|^{-\alpha-2}$. 
Importantly, the crucial point of their approach is that the distribution of the approximating Gaussian process is known, which is the case when the distribution of $\varepsilon$ is known but not the case otherwise. Hence their methodology is not directly applicable to our case.
We also note that, in their methodology, $f_{Y}$ appears as a scaling constant, and so that we need to estimate  $f_{Y}$ separately and thus to choose an appropriate bandwidth for $f_{Y}$ separately. 
On the other hand, we are using a different scaling, and a separate estimation of $f_{Y}$ is not needed. 
\end{remark}

\section{Simulation studies}
\label{sec: numerical simulations}

\subsection{Simulation framework}

In this section, we present simulation studies to evaluate finite-sample performance of the  inference method developed in the previous two sections.
We generate data from the model introduced in Example \ref{ex: canonical}.
For distributions of the primitive latent variables $(X,\varepsilon^{(1)},\varepsilon^{(2)})$, we consider two alternative models described below.

In the first model, $X$ is drawn from the centered normal distribution $N(0,\sigma_X^2)$, and
$\varepsilon^{(1)}$ and $\varepsilon^{(2)}$ are drawn from the Laplace distribution with $(0,1)$ as the location and scale parameters.
This Laplace distribution is symmetric around zero, and therefore the premise of Example \ref{ex: canonical} regarding the error variables is satisfied.
The distribution of $\varepsilon = (\varepsilon^{(1)} + \varepsilon^{(2)})/2$ has its characteristic function not vanishing on $\R$ and is ordinary smooth with $\alpha=4$.
This setting conveniently yields the signal-to-noise ratio given by 
\[
\sqrt{\frac{\Var (X)}{\Var (\varepsilon)}} = \sqrt{\frac{\sigma_X^2}{\Var (\varepsilon^{(1)})/4 + \Var (\varepsilon^{(2)})/4}} = \sigma_X.
\]

In the second model, $X$ is drawn from the chi-squared distribution $\chi^2(df)$, and
$\varepsilon^{(1)}$ and $\varepsilon^{(2)}$ are drawn from the Laplace distribution with $(0,\sqrt{2})$ as the location and scale parameters.
The distribution of $\varepsilon = (\varepsilon^{(1)} + \varepsilon^{(2)})/2$ has its characteristic function not vanishing on $\R$ and is ordinary smooth with $\alpha=4$.
In this setting, the signal-to-noise ratio is given by 
\[
\sqrt{\frac{\Var (X)}{\Var (\varepsilon)}} = \sqrt{\frac{2df}{\Var (\varepsilon^{(1)})/4 + \Var (\varepsilon^{(2)})/4}} = \sqrt{df}.
\]
Table \ref{tab:summary_two_models} summarizes the two models and their relevant properties.

{\small
\begin{table}
	\centering
		\begin{tabular}{lccc}
		\hline\hline
		                                                          && Model 1 & Model 2\\
		\hline
			Distribution of $X$                                     && $N(0,\sigma_X^2)$ & $\chi^2(df)$ \\
			Distribution of $(\varepsilon^{(1)},\varepsilon^{(2)})$ && Laplace\,$(0,1)$ & Laplace\,$(0,\sqrt{2})$\\
			Smoothness of $X$                                       && Super & Ordinary \\
			Smoothness of $\varepsilon$                             && Ordinary & Ordinary \\
			Signal-to-noise ratio                                   && $\sigma_X$ & $\sqrt{df}$\\
                   Interval $I$                                                && $[-2\sigma_X, 2\sigma_X]$ & $[\mu_X/2, \mu_X + 2\sigma_X]$\\
		\hline\hline
		\end{tabular}
     \medskip
	\caption{A summary of the two models considered for simulation studies.}
	\label{tab:summary_two_models}
\end{table}
}

The observed portion of data, $\mathcal{D}_n = \left\{Y_1,\ldots,Y_n,\eta_1,\ldots,\eta_n\right\}$, is constructed by $Y_j = (Y^{(1)}_j + Y^{(2)}_j)/2$ and $\eta_j=(Y^{(1)}_j- Y^{(2)}_j)/2$, where $Y^{(1)}_j= X_j+ \varepsilon^{(1)}_j$ and $Y^{(2)}_j = X_j + \varepsilon^{(2)}_j$, for each $j=1,\dots,n$.
The three primitive latent variables, $X$, $\varepsilon^{(1)}$, and $\varepsilon^{(2)}$ are independently generated.
We use Monte Carlo simulations to compute the coverage probabilities of our multiplier bootstrap confidence bands for $f_X$ on the interval $I = [-2\sigma_X, 2\sigma_X]$ for Model 1 and on the interval $I = [\mu_X/2, \mu_X + 2\sigma_X]$ where $(\mu_X,\sigma_X)=(df,\sqrt{2df})$ for Model 2.
We use the kernel function $K$ defined by its Fourier transform $\varphi_{K}$ as follows: 
\[
\varphi_{K} (t) =
\begin{cases}
1 &\text{if } | t | \leq c\\
\exp\left\{ \frac{-b \exp(-b/(| t | - c)^2)}{(| t | - 1)^2} \right\} &\text{if } c < | t | < 1\\
0 &\text{if } |t| \geq 1
\end{cases}
,
\]
where  $b=1$ and $c=0.05$ \citep[cf.][]{McPo04, BiDuHoMu07}. 
Note that $\varphi_K$ is infinitely differentiable with support $[-1,1]$, 
and its inverse Fourier transform $K$ is real-valued and integrable with $\int_{\R} K(x) dx = 1$.
%While we use this ``flat top kernel'' following earlier work in the literature, we emphasize that this flat-top feature is not required in our theory.
%Given this kernel function and the data $\mathcal{D}_n$, we have all the ingredients, namely, $\hat K_n$, $\hat f_X$, and $\hat \sigma_n$, to compute confidence bands, provided an appropriate choice of the bandwidth $h_n$ which we will discuss in the next subsection. 
For the bandwidth selection, we follow a data-driven rule discussed in the next subsection, inspired by \cite{BiDuHoMu07}.

\subsection{Bandwidth selection}
\label{subsec: choice of the bandwidth}

Our theory prescribes admissible asymptotic rates for the bandwidth $h_n$ that require undersmoothing.
The literature provides data-driven approaches to bandwidth selection, which are usually based on minimizing the MISE. 
These data-driven approaches tend to yield non-under-smoothing bandwidths, and do not conform with our requirements.
We adopt the two-step selection method developed in \citet[][Section 5.2]{BiDuHoMu07} that aims to select undersmoothing bandwidths.
The first step selects a pilot bandwidth $h_n^P$ based on a data-driven approach.
We simply use a normal reference bandwidth \citep[][Section 3.1]{DeGi04} for $h_n^P$.
Once the pilot bandwidth $h_n^P$ is obtained, we next make a list of candidate bandwidths $h_{n,j} = (j/J) h_n^P$ for $j = 1,\ldots, J$.
The deconvolution estimate based on the $j$-th candidate bandwidth is denoted by $\hat f_{X,j}$.
The second step in the two step approach chooses the largest bandwidth $h_{n,j}$ such that the adjacent uniform distance $\| \hat f_{X,j-1} - \hat f_{X,j} \|_{I}$ is larger than $\rho \| \hat f_{X,J-1} - \hat f_{X,J} \|_{I}$ in the pilot case for some $\rho > 1$.
Similarly to the values recommended by \cite{BiDuHoMu07}, we find that $J \approx 20$ and $\rho \approx 3$ work well in our simulation studies.

\subsection{Simulation results}

Simulated uniform coverage probabilities are computed for each of the three nominal coverage probabilities, 80\%, 90\%, and 95\%, based on 2,000 Monte Carlo iterations.
In each run of the simulation, we generate 2,500 multiplier bootstrap replications given the observed data $\mathcal{D}_n$ to compute the estimated critical values, $\hat c_n(1-\tau)$.

\begin{table}[t]
	\centering
	\scalebox{.92}{
		\begin{tabular}{cccccccccc}
		\hline\hline
		  &&& \multicolumn{3}{c}{(A) Model 1} && \multicolumn{3}{c}{(B) Model 2} \\
			\cline{4-6}\cline{8-10}
			Nominal Coverage & Sample && \multicolumn{3}{c}{Signal-to-Noise Ratio} && \multicolumn{3}{c}{Signal-to-Noise Ratio}\\
			Probability ($1-\tau$) & Size ($n$) && 2.0 & 4.0 & 8.0 && 2.0 & 4.0 & 8.0 \\
		\hline
		  0.800            &   250  && 0.762 & 0.755 & 0.728  && 0.636 & 0.740 & 0.666\\
			                 &   500  && 0.786 & 0.746 & 0.763  && 0.698 & 0.751 & 0.685\\
											 & 1,000  && 0.784 & 0.750 & 0.754  && 0.732 & 0.749 & 0.702\\
		\hline
			0.900            &   250  && 0.870 & 0.866 & 0.843  && 0.760 & 0.853 & 0.799\\
			                 &   500  && 0.897 & 0.862 & 0.863  && 0.822 & 0.867 & 0.822\\
											 & 1,000  && 0.897 & 0.862 & 0.863  && 0.848 & 0.878 & 0.836\\
		\hline
			0.950            &   250  && 0.930 & 0.936 & 0.907  && 0.834 & 0.927 & 0.871\\
			                 &   500  && 0.951 & 0.929 & 0.923  && 0.834 & 0.928 & 0.897\\
											 & 1,000  && 0.942 & 0.933 & 0.933  && 0.912 & 0.940 & 0.906\\
		\hline\hline
		\end{tabular}
	}%END SCALEBOX
     \medskip
	\caption{{\small Simulated uniform coverage probabilities of $f_X$ by estimated confidence bands in $I=[-2\sigma_X,2\sigma_X]$ for Model 1 and $I = [\mu_X/2, \mu_X + 2\sigma_X]$ for Model 2. The simulated probabilities are computed for each of the three nominal coverage probabilities, 80\%, 90\%, and 95\%, based on 2,000 Monte Carlo iterations.}}
	\label{tab:simulation_results}
\end{table}

Results under Models 1 and 2 are summarized in column groups (A) and (B), respectively, of Table \ref{tab:simulation_results} for each of the three different cases of the signal-to-noise ratio: $\sigma_X \in \{2.0, 4.0, 8.0\}$, and for each of the three sample sizes $n=m \in \{250, 500, 1,000\}$.
Observe that the simulated probabilities are close to the respective nominal probabilities.
Not surprisingly, the size tends to be more accurate for the results based on larger sample sizes.
The simulated probabilities are closer to the nominal probabilities in (A) than (B).

In addition to the size, we also analyze the power of uniform specification tests based on our uniform confidence band.
We now consider a list of alternative specifications of $f_X$ given by
$f_{X,\mu_X}(x) = (2\pi)^{-1/2} e^{-(x-\mu_{X})^{2}/2}$ for $\mu_X \in \{0.0,0.1,0.2,0.3,0.4,0.5\}$,
and likewise consider a list of alternative specifications given by
$f_{X,\sigma_X}(x) = (2\pi \sigma_{X}^{2})^{-1/2} e^{-x^{2}/(2\sigma_{X}^{2})}$ for $\sigma_X \in \{1.0,1.1,1.2,1.3,1.4,1.5\}$.
For the errors, we again consider the independent Laplace random vector $(\varepsilon^{(1)}, \varepsilon^{(2)})$ as in Model 1.
Figure \ref{fig:alternative_mu_coverage90} plots simulated coverage probabilities for the list of the alternative specifications of $f_{X,\mu_X}$ (top) and for the list of the alternative specifications of $f_{X,\sigma_X}$ (bottom) for the nominal coverage probability of $(1-\tau) = 0.90$.
The three curves are drawn for each of the three sample sizes $n \in \{250, 500, 1,000\}$.
Observe that, under the true specification (i.e., $\mu_X = 0.0$ in the top graph and $\sigma_X = 1.0$ in the bottom graph), the simulated coverage probabilities are close to the nominal coverage probability of $0.90$, with the case of $n=1,000$ being the closest and the case of $n=250$ being the farthest.
On the other hand, as the specification deviates away from the truth (i.e., as $\mu_X$ or $\sigma_X$ increases), the nominal coverage probabilities decrease, with the case of $n=1,000$ being the fastest and the case of $=250$ being the slowest.
These results evidence the power as well as the size of the uniform specification tests.

\begin{figure}
	\centering
		\includegraphics[width=0.75\textwidth]{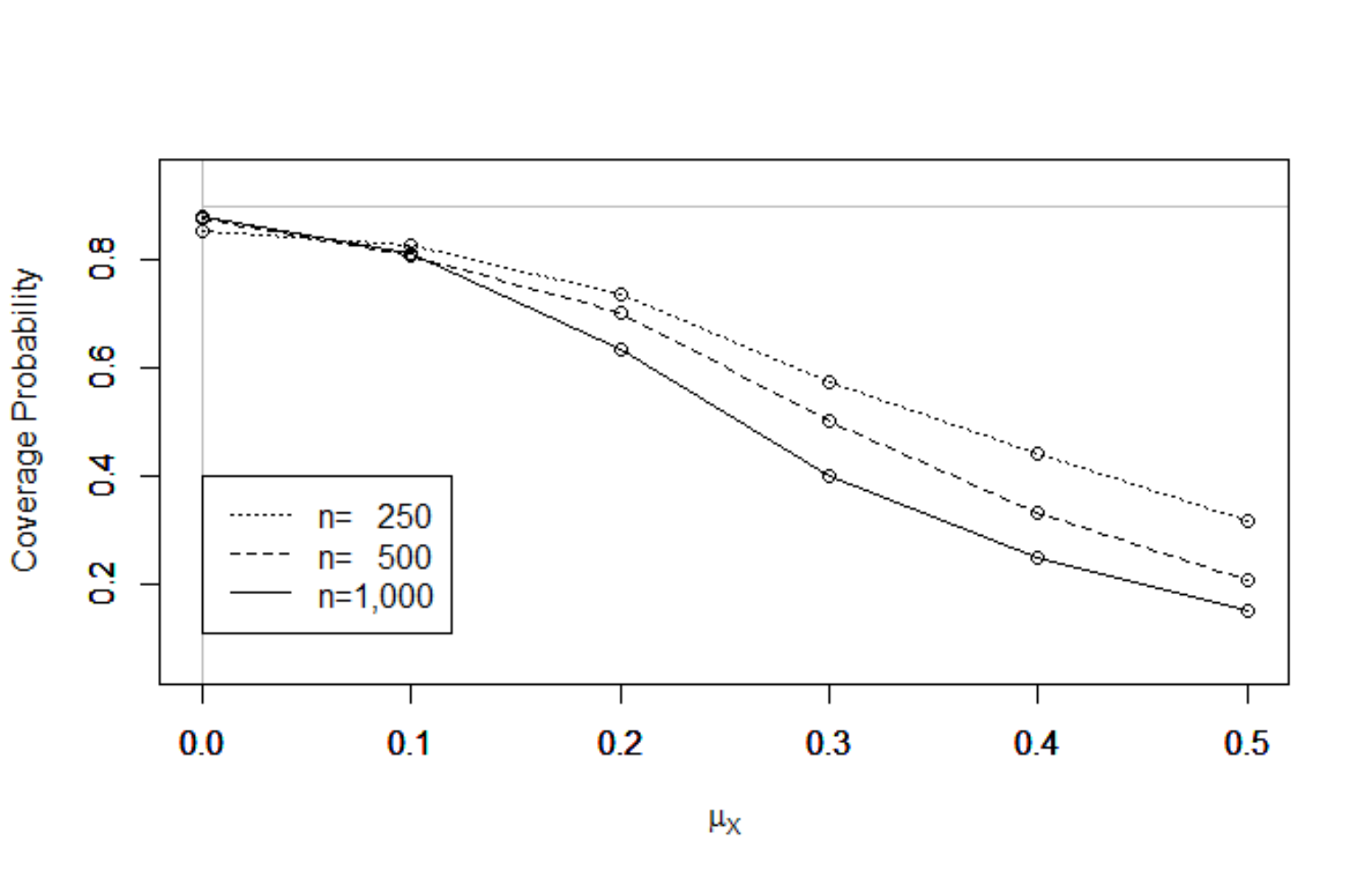}\\
		\includegraphics[width=0.75\textwidth]{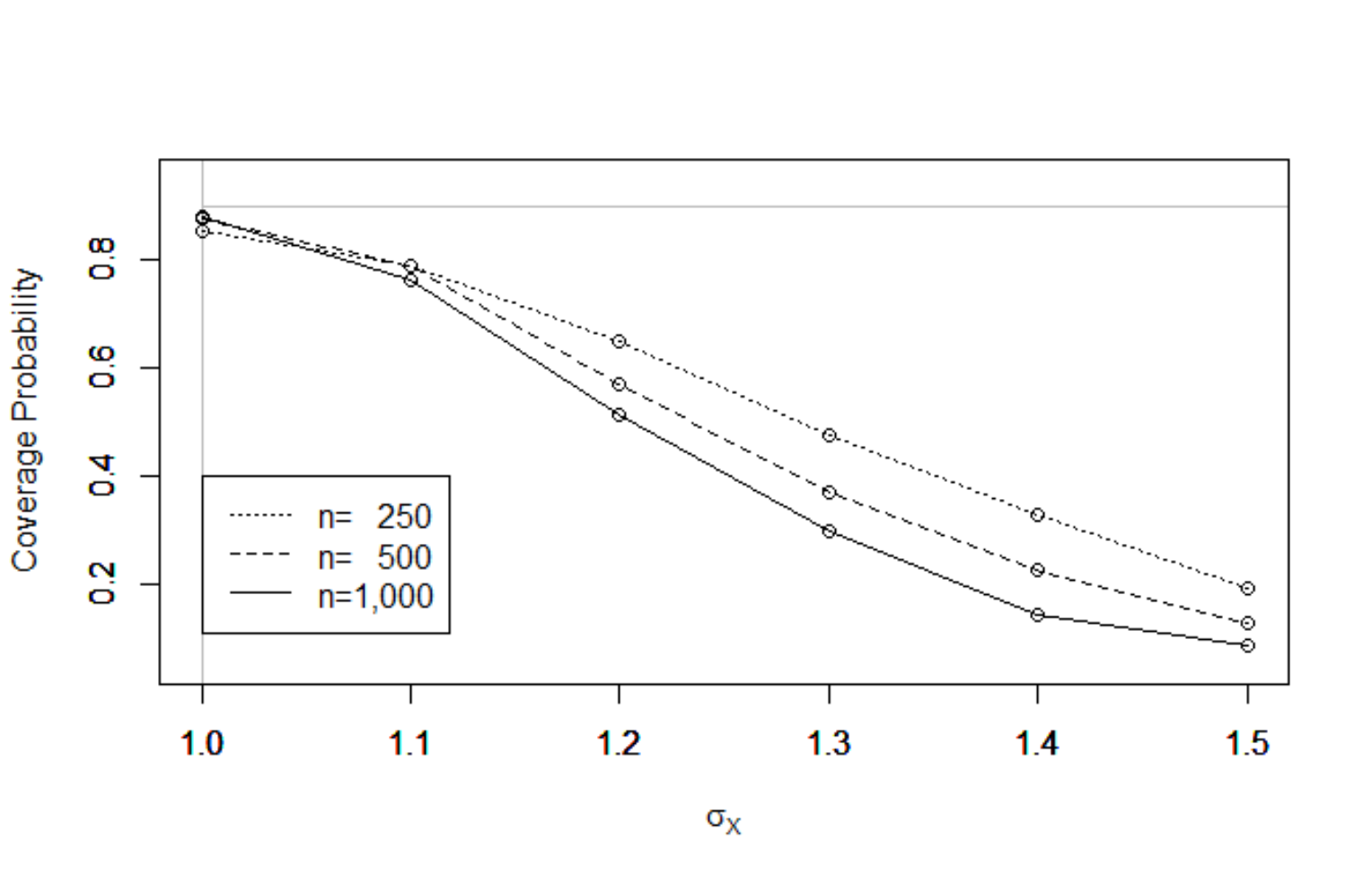}
	\caption{{\small Simulated uniform coverage probabilities for alternative specifications of $f_X$  by estimated confidence bands in $I=[-2,2]$ under the standard normal random variable $X$ and an independent Laplace random vector $(\varepsilon^{(1)}, \varepsilon^{(2)})$. For the top graph, the list of alternative specifications are given by $f_{X,\mu_X}(x) = (2\pi)^{-1/2} e^{-(x-\mu_{X})^{2}/2}$ for $\mu_X \in \{0.0,0.1,0.2,0.3,0.4,0.5\}$. 
For the bottom graph, the list of alternative specifications are given by $f_{X,\sigma_X}(x) = (2\pi \sigma_{X}^{2})^{-1/2} e^{-x^{2}/(2\sigma_{X}^{2})}$ for $\sigma_X \in \{1.0,1.1,1.2,1.3,1.4,1.5\}$. The simulated probabilities are computed for the nominal coverage probability of 90\% based on 2,000 Monte Carlo iterations.}}
	\label{fig:alternative_mu_coverage90}
\end{figure}

\section{Application to OCS Wildcat auctions}
\label{sec: application}

In this section, we apply our method to the Outer Continental Shelf (OCS) Auction Data (see \cite{HePoBo87} for details), and construct a confidence band for the density of mineral rights on oil and gas on offshore lands off the coasts of Texas and Louisiana in the gulf of Mexico.
We focus on ``wildcat sales,'' referring to sales of those oil and gas tracts whose geological or seismic characteristics are unknown to participating firms.
The sales rule follows the first-price sealed-bid auction mechanism, where participating firms simultaneously submit sealed bids, and the highest bidder pays the price they submitted to receive the right for the tract.
Firms who are willing to participate in sales can carry out a seismic investigation before the sales date in order to estimate the value of mineral rights.
The ex ante value $Y^{(1)}$ (in the logarithm of US dollars per acre) obtained by firm 1 through its investigation of the tract is treated as a measure of the \text{ex post} value $X$ (also known as the common component, in the logarithm of US dollars per acre) with an assessment error $\varepsilon^{(1)}$ (also known as the private component), i.e., $Y^{(1)} = X + \varepsilon^{(1)}$.
Collecting the ex ante values $(Y^{(1)},Y^{(2)})$ for pairs of firms across various wildcat auctions, we can obtain data necessary to construct a uniform confidence band for the density $f_X$ of ex post mineral right values under our assumptions.

For this setup and for this data set, \cite{LiPeVu00} apply the method of \cite{LiVu98} to nonparametrically estimate $f_X$, but they do not obtain a confidence band.
In their analysis, firms' ex ante values $(Y^{(1)},Y^{(2)})$ are first recovered from bid data through a widely used method in economics which is based on an equilibrium restriction (Bayesian Nash equilibrium) for the first-price sealed-bid auction mechanism -- see our supplementary material.
In this paper, we directly take these ex ante values $(Y^{(1)},Y^{(2)})$ as the data to be used as an input for our analysis.
The sample consists of 169 tracts with 2 firms in each tract.
We next construct our auxiliary data $(Y,\eta)$ following Example \ref{ex: canonical}.
% under the assumption that the private components $(\varepsilon^{(1)},\varepsilon^{(2)})$ in the firms' valuation are symmetrically and identically distributed.
%It is supposed that, ``because bidders have equal access to the same information about the tract, they can be considered as identical ex ante'' \citep{LiPeVu00}.
%This supports the identical distribution assumption that we make.

We continue to use the same kernel function and the same bandwidth selection rule as those ones used for simulation studies in Section \ref{sec: numerical simulations}.
Confidence bands for $f_X$ are constructed using 25,000 multiplier bootstrap replications.
Figure \ref{fig:application_band} (A) shows the obtained 90\% and 95\% confidence bands in light gray and dark gray, respectively.
The black curve draws our nonparametric estimate of $f_X$, and runs at the center of the bands.
Our nonparametric estimate, our confidence bands, and the nonparametric estimate obtained by \cite{LiPeVu00} -- shown in their Figure 4 -- are all similar to each other, and share the same qualitative characteristics.
First, unlike the bimodal density of the private values $(Y^{(1)},Y^{(2)})$ (see Figure \ref{fig:application_data} in the supplementary material), the density of the common value $X$ is suggested to be single-peaked in all the results of our analysis and that of \cite{LiPeVu00}.
Second, as \cite{LiPeVu00} also emphasized, the small bump in the estimated density $f_X$ around $x=6$ is common in all the results.
Furthermore, our confidence bands do not include zero at this locality, $x=6$.
This result can be viewed as a statistical evidence in support of a significant presence of such a bump pointed out by \cite{LiPeVu00}.
A more global look into the graph suggests that the 95\% confidence band is bounded away from zero on the interval $[3.90,6.32]$.
In other words, we can conclude that the ex post value of mineral rights in the US dollars per acre is supported on a superset of the interval $[\exp(3.90),\exp(6.32)] \approx [49,555]$, if we consider a set of density functions contained in the 95\% confidence band.

\begin{figure}
	\centering
	\begin{tabular}{cc}
		(A) Unknown Error Distribution & (B) Laplace Error Distribution\\
		\includegraphics[width=0.5\textwidth]{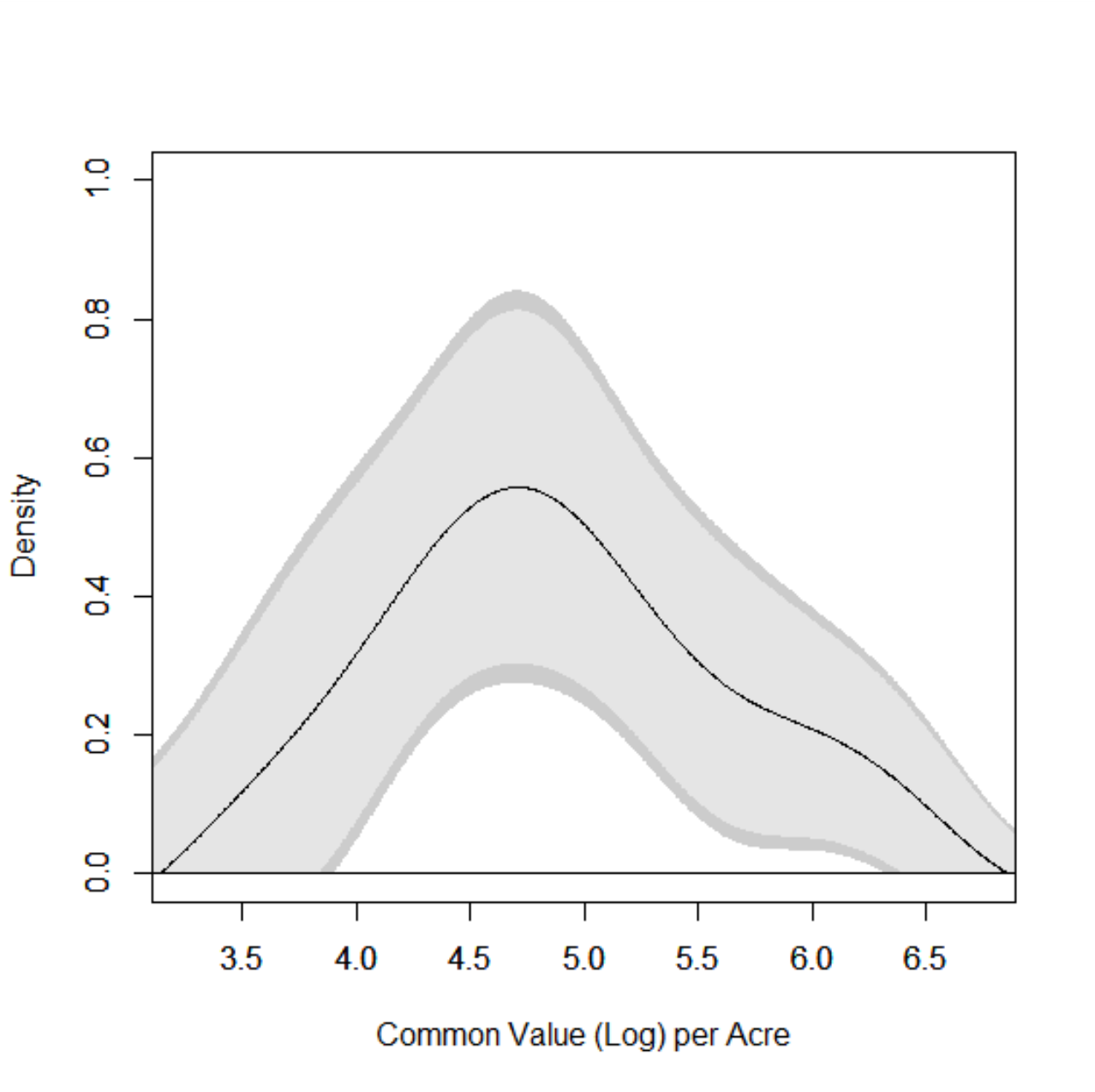} &
		\includegraphics[width=0.5\textwidth]{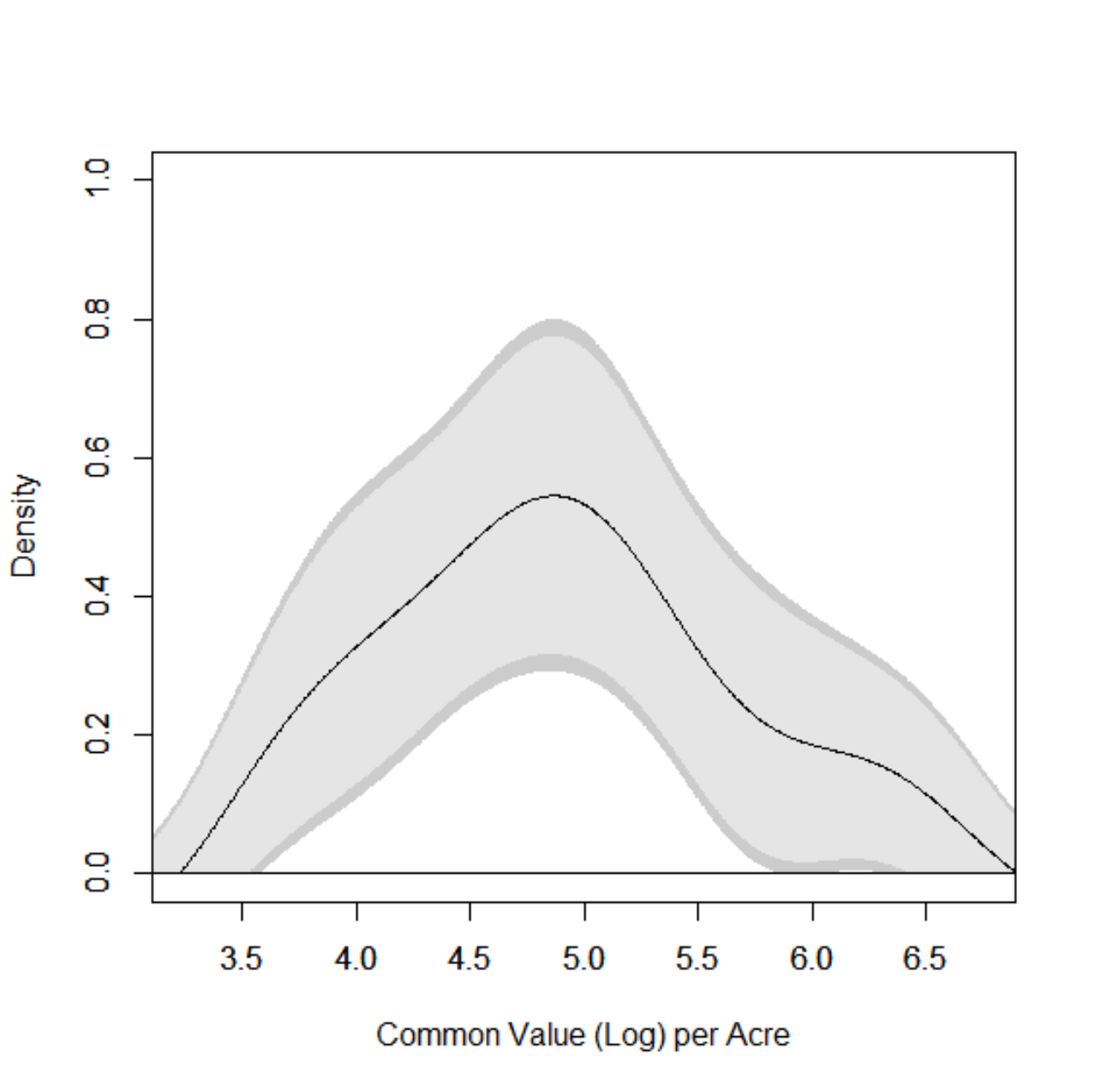}
	\end{tabular}
	\caption{{\small 90\% and 95\% confidence bands for $f_X$ marked in light gray and dark gray, respectively. The black curve in the middle indicates our  deconvolution kernel density estimate of $f_X$.}}
	\label{fig:application_band}
\end{figure}

Finally, %for the purpose of illustrating the advantage of our method over the existing others, 
we show an empirical result that we would obtain under the assumption that the distribution of $\varepsilon$ were known as in the previous literature.
Specifically, for this exercise, we assume that $\varepsilon$ follows $\text{Laplace}\,(0,\sqrt{\sigma_\varepsilon^2/2})$, where $\sigma_\varepsilon^2 = 0.168$ -- this number is the sample variance of $\eta_j$.
Figure \ref{fig:application_band} (B) shows the result.
%Some qualitative features still remain the same as those of the previous result based on unknown error distribution.
%On the other hand, 
There are some notable differences from Figure \ref{fig:application_band} (A).
Most importantly, the 95\% confidence band now contains zero around the locality, $x=6$, of the aforementioned bump in the estimated density of $X$.
The contrast between Figure \ref{fig:application_band} (A) and (B) shows that there can be non-trivial differences in statistical implications of confidence bands between the case of assuming known error distribution and the case of assuming unknown error distribution.

\section{Application to panel data}
\label{sec: panel}

Example \ref{ex: canonical} demonstrates that the availability of repeated measurements with a symmetric error distribution satisfy our data requirement.
A particular example of this case is the additive panel data model with fixed effects as in \citet{HoMa96}:
\[
Y_{j,t} = W_{j,t}'\theta_{0} + U_{j} + V_{j,t}, \quad j=1,\dots,n; t=1,2,
\]
where $Y_{j,t}$ is a scalar outcome variable, $W_{j,t}$ is a $d$-dimensional vector of regressors, $\theta_{0} \in \R^{d}$ is the slope parameter, $U_{j}$ is an unobservable individual-specific effect, and $V_{j,t}$ is an error term. 
Inference on the density function of unobservables in this sort of panel data models is of interest in empirical research \citep[e.g.,][]{BoRo10,BoSa11}.
\citet{HoMa96} or other related panel data papers do not provide asymptotic distribution results for the density function of unobservables to our knowledge.
Our method applies to inference on the density of $U_j$.%, which for example measures the efficiency in the context of the stochastic frontier analysis \citep[e.g.,][]{AiLoSc77} or the total factor productivity in the context of production analysis.

\subsection{Methodology for panel data}

We assume that 
\begin{equation}
\label{eq: panel data condition}
\begin{cases}
\text{1) $\{ (Y_{j,1},Y_{j,2},W_{j,1},W_{j,2},U_{j},V_{j,1},V_{j,2}) \}_{j=1}^{\infty}$ are i.i.d.}; \\
\text{2) $(V_{j,1},V_{j,2})$ is independent of $(W_{j,1},W_{j,2},U_{j})$}; \\
\text{3) the conditional distribution of $V_{j,2}$ given $V_{j,1}$ is symmetric}. 
\end{cases}
\end{equation}
Independence between $(V_{j,1},V_{j,2})$ and $(W_{j,1},W_{j,2})$ can be removed at the expense of more complicated regularity conditions, but we assume the independence assumption for the simplicity of exposition; see Remark \ref{rem: independence} ahead. \cite{HoMa96} assume that $V_{j,1}$ and $V_{j,2}$ are i.i.d. and the common distribution is symmetric (see their Condition A.1); in contrast, we do not assume that $V_{j,1}$ and $V_{j,2}$ are i.i.d., nor did we assume symmetry of both distributions of $V_{j,1}$ and $V_{j,2}$. 
Now, consider the following transformations:
\[
\begin{cases}
Y_{j}^{\dagger} = (Y_{j,1}+Y_{j,2})/2 - \theta_{0}'(W_{j,1}+W_{j,2})/2, \\
 \varepsilon_{j} = V_{j,1}+V_{j,2})/2, \\
 \eta_{j}  = (V_{j,1}-V_{j,2})/2= (Y_{j,1}-Y_{j,2})/2 - \theta_{0}'(W_{j,1}-W_{j,2})/2.
\end{cases}
\]
Observe that 
\[
Y_{j}^{\dagger} = U_{j} + \varepsilon_{j}, \quad \eta_{j} \stackrel{d}{=} \varepsilon_{j}.
\]
We assume that the densities of $U_{j}$ and $\varepsilon_{j}$ exist and are denoted by $f_{U}$ and $f_{\varepsilon}$, respectively. 
The density of $Y_{j}^{\dagger}$ is given by 
\[
f_{Y^{\dagger}} (u) =(f_{U}*f_{\varepsilon})(u) =  \int_{\R} f_{U}(u-y) f_{\varepsilon}(y) dy.
\]
Hence, estimation of the density $f_{U}$ reduces to a deconvolution problem. 
The difference from the original setup is that $\theta_{0}$ is unknown and has to be estimated. 
We assume, as in \citet{HoMa96}, that there is an estimator $\hat{\theta}$ of $\theta_{0}$ such that $\hat{\theta} - \theta_{0} = O_{\Pr}(n^{-1/2})$, and let 
\[
\begin{cases}
\hat{Y}_{j}^{\dagger} = (Y_{j,1}+Y_{j,2})/2 - \hat{\theta}'(W_{j,1}+W_{j,2})/2, \\
\hat{\eta}_{j} =   (Y_{j,1}-Y_{j,2})/2 - \hat{\theta}'(W_{j,1}-W_{j,2})/2.
\end{cases}
\]
Define
\begin{align*}
&\varphi_{Y^{\dagger}} (t) = \Ep[ e^{itY_{j}^{\dagger}}], \ \varphi_{U}(t)  = \Ep[ e^{itU_{j}} ], \ \varphi_{\varepsilon}(t) = \Ep[ e^{it\varepsilon_{j}}], \\
&\hat{\varphi}_{Y^{\dagger}}^{*}(t) = \frac{1}{n}\sum_{j=1}^{n} e^{itY_{j}^{\dagger}}, \ \hat{\varphi}_{Y^{\dagger}} (t) = \frac{1}{n}\sum_{j=1}^{n} e^{it\hat{Y}_{j}^{\dagger}}, \ \hat{\varphi}_{\varepsilon}^{*}(t) = \frac{1}{n}\sum_{j=1}^{n} e^{it\eta_{j}}, \ \hat{\varphi}_{\varepsilon}(t) = \frac{1}{n} \sum_{j=1}^{n} e^{it\hat{\eta}_{j}}.
\end{align*}
Let $K: \R \to \R$ be a kernel function such that its Fourier transform $\varphi_{K}$ is supported in $[-1,1]$.
The deconvolution kernel density estimator of $f_{U}$ is given by 
\[
\hat{f}_{U}(u) = \frac{1}{2\pi} \int_{\R} e^{-itu} \hat{\varphi}_{Y^{\dagger}} (t) \frac{\varphi_{K}(th_{n})}{\hat{\varphi}_{\varepsilon}(t)} dt = \frac{1}{nh_{n}} \sum_{j=1}^{n} \hat{K}_{n}((u-\hat{Y}_{j}^{\dagger})/h_{n}),
\]
where  $h_{n}$ is a sequence of bandwidths tending to $0$ as $n \to \infty$, and 
\[
\hat{K}_{n}(u) = \frac{1}{2\pi} \int_{\R} e^{-itu} \frac{\varphi_{K}(t)}{\hat{\varphi}_{\varepsilon}(t/h_{n})} dt. 
\]
The rest of the procedure is the same as in Section \ref{sec: methodology}. Let $\xi_{1},\dots,\xi_{n}$ be independent standard normal variables independent of the data $\mathcal{D}_{n}= \{ (Y_{j,1},Y_{j,2},W_{j,1},W_{j,2}) \}_{j=1}^{n}$, and consider the multiplier process
\[
\hat{Z}_{n}^{\xi} (u) = \frac{1}{\hat{\sigma}_{n}(u)\sqrt{n}} \sum_{j=1}^{n} \xi_{j} \left \{  \hat{K}_{n} ((u-\hat{Y}^{\dagger}_{j})/h_{n}) - n^{-1} {\textstyle \sum}_{j'=1}^{n} \hat{K}_{n}((u-\hat{Y}^{\dagger}_{j'})/h_{n}) \right \}, \ u \in I,
\]
where $I \subset \R$ is a compact interval on which we would like to make inference on $f_{U}$, and 
\[
\hat{\sigma}_{n}^{2}(u) = \frac{1}{n} \sum_{j=1}^{n} \hat{K}_{n}^{2}((u-\hat{Y}_{j}^{\dagger})/h_{n}) - \left ( \frac{1}{n} \sum_{j=1}^{n} \hat{K}_{n}((u-\hat{Y}_{j}^{\dagger})/h_{n}) \right )^{2}, \ u \in I. 
\]
Now, for a given $\tau \in (0,1)$, let
\[
\hat{c}_{n}(1-\tau) = \text{conditional $(1-\tau)$-quantile of $\| \hat{Z}_{n}^{\xi} \|_{I}$ given $\mathcal{D}_{n}$},
\]
and consider the confidence band 
\begin{equation}
\hat{\mathcal{C}}_{n} (u)= \left [ \hat{f}_{U}(u) \pm \frac{\hat{\sigma}_{n}(u)}{\sqrt{n}h_{n}} \hat{c}_{n}(1-\tau)\right ], \ u \in I. 
\label{eq: MB confidence band panel}
\end{equation}

We make the following assumption for the validity of the confidence band (\ref{eq: MB confidence band panel}). 

\begin{assumption}
\label{as: panel}
In addition to the baseline condition (\ref{eq: panel data condition}), we assume the following conditions. (i) The function $|\varphi_{U}|$ is integrable on $\R$. (ii) $f_{U} \in \Sigma (\beta,B)$ for some $\beta > 1/2$ and $B>0$. Let $k$ denote the integer such that $k < \beta \leq \beta + 1$. (iii) Let $K: \R \to \R$ be a kernel function such that its Fourier transform $\varphi_{K}$ is supported in $[-1,1]$ and Condition (\ref{eq: higher order kernel}) is satisfied. (iv) The error characteristic function  $\varphi_{\varepsilon}$ is continuously differentiable and does not vanish on $\R$, and 
there exist constants $\alpha > 1/2$ and $C_{1} > 1$ such that $
C_{1}^{-1} |t|^{-\alpha} \leq | \varphi_{\varepsilon} (t) | \leq C_{1} |t|^{-\alpha}$ and $| \varphi_{\varepsilon}'(t) | \leq C_{1}|t|^{-\alpha-1}$ for all $|t| \geq 1$. (v) For some $p > 0$, $\Ep[ |\varepsilon_{1}|^{p} ] < \infty$ and $\Ep[ (1+|Y_{1}^{\dagger}|^{p}) (\| W_{1,1} \|^{2} + \| W_{1,2} \|^{2})] < \infty$. 
(vi) Let $\hat{\theta}$ be an estimator for $\theta_{0}$ such that $\hat{\theta} - \theta_{0} = O_{\Pr}(n^{-1/2})$. 
(vii) Let $I$ be a compact interval in $\R$ such that $f_{Y^{\dagger}}(u) > 0$ for all $u \in I$. (viii) 
\begin{equation}
\frac{\log h_{n}^{-1}}{nh_{n}^{5}} \bigvee \frac{(\log h_{n}^{-1})^{2}}{nh_{n}^{2\alpha+2}} \bigvee h_{n}^{\alpha+\beta}  \sqrt{n h_{n} \log h_{n}^{-1}} \to 0. \label{eq: bandwidth panel}
\end{equation}
Furthermore, for $W_{1,t} = (W_{1,t,1},\dots,W_{1,t,d})'$, 
\begin{equation}
\int_{-h_{n}^{-1}}^{h_{n}^{-1}}  | s \Ep[ e^{isU_{1}} W_{1,t,\ell}] | ds = o \{ h_{n}^{-\alpha-1/2} (\log h_{n}^{-1})^{-1/2} \}, \ t=1,2; \ell=1,\dots,d. \label{eq: additional condition}
\end{equation}
\end{assumption}

These conditions ensure the asymptotic validity of the confidence band (\ref{eq: MB confidence band panel}). 

\begin{theorem}
\label{thm: validity of MB panel}
Under Assumption \ref{as: panel}, we have that $\Pr \{ f_{U}(u) \in \hat{\mathcal{C}}_{n} (u) \ \forall u \in I \} \to 1-\tau$ as $n \to \infty$. Furthermore, the supremum width of the band $\hat{\mathcal{C}}_{n}$ is $O_{\Pr}\{ h_{n}^{-\alpha} (nh_{n})^{-1/2} \sqrt{\log h_{n}^{-1}}\}$. 
\end{theorem}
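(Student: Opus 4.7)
The plan is to reduce the feasible construction based on $(\hat Y_j^\dagger, \hat\eta_j)$ to the infeasible (``oracle'') construction based on $(Y_j^\dagger, \eta_j)$, and then apply Corollary \ref{cor: multiplier bootstrap} to the oracle case (which fits the baseline framework with $m=n$, since Assumption \ref{as: panel}(i)--(v),(vii) is exactly Assumptions \ref{as: standard}--\ref{as: variance} and \ref{as: moment condition} for the pair $(Y_j^\dagger,\eta_j)$, the rate portion of Assumption \ref{as: rate} is implied by (\ref{eq: bandwidth panel}), and the undersmoothing bandwidth condition $h_n^{\alpha+\beta}\sqrt{nh_n\log h_n^{-1}}\to 0$ is also in (\ref{eq: bandwidth panel})). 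Thus the main work is to show that the plug-in of $\hat\theta$ contaminates the characteristic functions, the density estimator, the variance estimator and the multiplier process by terms that are negligible at the resolution $h_n^{-\alpha}(nh_n)^{-1/2}\sqrt{\log h_n^{-1}}$.

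Denote by $\hat\varphi_\varepsilon^*$ and $\hat\varphi_{Y^\dagger}^*$ the oracle empirical characteristic functions and by $\hat K_n^*$, $\hat f_U^*$, $\hat\sigma_n^{*2}$, $\hat Z_n^{\xi,*}$ the corresponding oracle quantities. Writing $\hat\eta_j - \eta_j = -(\hat\theta-\theta_0)'(W_{j,1}-W_{j,2})/2$ and $\hat Y_j^\dagger - Y_j^\dagger = -(\hat\theta-\theta_0)'(W_{j,1}+W_{j,2})/2$, I would Taylor expand the exponentials:
\[
e^{it\hat\eta_j} - e^{it\eta_j} = -it(\hat\theta-\theta_0)'(W_{j,1}-W_{j,2})/2\cdot e^{it\eta_j} + R_j(t),
\]
with $|R_j(t)| \le t^2\|\hat\theta-\theta_0\|^2\|W_{j,1}-W_{j,2}\|^2/8$, and similarly for $\hat\varphi_{Y^\dagger}$. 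Using Assumption \ref{as: panel}(v)--(vi) and a uniform Bernstein-type bound over the frequency grid $[-h_n^{-1},h_n^{-1}]$, the second-order remainder contributes $O_\Pr(h_n^{-2}n^{-1})$ uniformly, which is absorbed by (\ref{eq: bandwidth panel}) since $h_n^{-\alpha-2}n^{-1}\ll h_n^{-\alpha}(nh_n)^{-1/2}\sqrt{\log h_n^{-1}}$ under the bandwidth conditions. The first-order linear-in-$(\hat\theta-\theta_0)$ correction is the delicate part: the deterministic portion of the induced shift in $\hat f_U-\hat f_U^*$ equals, up to lower-order terms,
\[
\frac{-i}{2\pi}(\hat\theta-\theta_0)'\int_{-h_n^{-1}}^{h_n^{-1}}\!\! t\,\Ep[e^{itU_1}(W_{1,1}+W_{1,2})/2]\,\varphi_K(th_n)\,e^{-itu}\,dt
\]
(after the cancellation $\varphi_\varepsilon(t)/\varphi_\varepsilon(t)=1$ that results from combining the numerator and denominator expansions), and condition (\ref{eq: additional condition}) is precisely tailored to bound the magnitude of this integral by $o(h_n^{-\alpha-1/2}(\log h_n^{-1})^{-1/2})$. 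Multiplying by $\|\hat\theta-\theta_0\|=O_\Pr(n^{-1/2})$ yields $o_\Pr(h_n^{-\alpha}(nh_n)^{-1/2}(\log h_n^{-1})^{-1/2})$, strictly smaller than the target scale, as required. The stochastic portion of the first-order correction has variance $O(h_n^{-2}n^{-1})$ after integrating against the Fourier kernel, which is again absorbed; an analogous treatment applies to $\hat\sigma_n$ and to the multiplier process $\hat Z_n^\xi$.

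Given the above, one obtains $\|\hat f_U-\hat f_U^*\|_I = o_\Pr(h_n^{-\alpha}(nh_n)^{-1/2}(\log h_n^{-1})^{1/2})$, $\|\hat\sigma_n-\hat\sigma_n^*\|_I/\|\hat\sigma_n^*\|_I = o_\Pr(1)$, and $\sup_{z\in\R}|\Pr\{\|\hat Z_n^\xi\|_I\le z\mid\mathcal D_n\}-\Pr\{\|\hat Z_n^{\xi,*}\|_I\le z\mid\mathcal D_n\}|\stackrel{\Pr}{\to}0$, where the oracle process $\hat Z_n^{\xi,*}$ is covered by Theorem \ref{thm: multiplier bootstrap} applied to $(Y_j^\dagger,\eta_j)$. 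Combining these approximations with the oracle Gaussian approximation gives $\sup_{z}|\Pr\{\|\hat Z_n\|_I\le z\}-\Pr\{\|Z_n^G\|_I\le z\}|\to 0$ and the conditional bootstrap approximation, hence $\Pr\{\|\hat Z_n\|_I\le\hat c_n(1-\tau)\}\to 1-\tau$. The bias $\|f_U-\Ep[\hat f_U^*]\|_I$ is controlled by the H\"older assumption on $f_U$ together with the higher-order kernel and the undersmoothing part of (\ref{eq: bandwidth panel}), as in Corollary \ref{cor: multiplier bootstrap}, which promotes coverage of $\Ep[\hat f_U^*]$ to coverage of $f_U$. Finally, the supremum-width bound follows from Theorem \ref{thm: multiplier bootstrap} applied to the oracle case and the fact that $\hat\sigma_n/\hat\sigma_n^*\to_\Pr 1$ uniformly on $I$.

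The principal technical obstacle is isolating and bounding the first-order-in-$(\hat\theta-\theta_0)$ contamination at a resolution finer than $h_n^{-\alpha}(nh_n)^{-1/2}\sqrt{\log h_n^{-1}}$: naively the factor $|t|$ introduced by differentiation of the exponential costs an extra $h_n^{-1}$ inside the Fourier inversion, which would destroy the approximation. Condition (\ref{eq: additional condition}) is what makes this bound hold, and its verification (via the joint smoothness of $U$ and the tail behavior of $\Ep[e^{itU_1}W_{1,t,\ell}]$) is the substantive step beyond the baseline argument of Section \ref{sec: main}.
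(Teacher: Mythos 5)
Your strategy matches the paper's: Taylor-expand $e^{it\hat Y_j^\dagger}$ and $e^{it\hat\eta_j}$ around $(\hat\theta-\theta_0)=0$, use the independence structure of (\ref{eq: panel data condition}) to replace $\Ep[e^{itY_1^\dagger}W_1^+]$ by $\varphi_\varepsilon(t)\Ep[e^{itU_1}W_1^+]$ so the $\varphi_\varepsilon$ in the deconvolution denominator cancels, invoke (\ref{eq: additional condition}) to control the resulting Fourier integral, absorb second-order remainders via (\ref{eq: bandwidth panel}), and propagate the same reduction through $\hat\sigma_n$ and $\hat Z_n^\xi$ before falling back on the Gaussian/bootstrap approximation machinery. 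One imprecision worth flagging: your formula for the ``deterministic portion of the induced shift'' in $\hat f_U - \hat f_U^*$ records only the numerator first-order term, $-it(\hat\theta-\theta_0)\Ep[e^{itU_1}W_1^+]$, but if $\hat f_U^*$ is the full oracle (using both $Y_j^\dagger$ and $\eta_j$) there is an additional first-order piece coming from the $\hat\varphi_\varepsilon$ denominator, namely $+it(\hat\theta-\theta_0)\varphi_U(t)\Ep[W_1^-]$ after the $\varphi_\varepsilon$ cancellation. This second piece is not covered by condition (\ref{eq: additional condition}); its Fourier inversion is $O_\Pr(n^{-1/2}h_n^{-1})$ (using $\int|\varphi_U|<\infty$ and $|\Ep[W_1^-]|<\infty$), and absorbing it at the resolution $h_n^{-\alpha}(nh_n)^{-1/2}(\log h_n^{-1})^{-1/2}$ is precisely where Assumption \ref{as: panel}(iv)'s requirement $\alpha>1/2$ bites. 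The paper's proof decouples the two sources of error (it first swaps $\hat\varphi_{Y^\dagger}\to\hat\varphi_{Y^\dagger}^*$, leaving $\hat\varphi_\varepsilon$ in place, and then separately bounds $\hat\varphi_\varepsilon-\hat\varphi_\varepsilon^*$ via (\ref{eq: ecf estimated}) and feeds it into the Step-2 Cauchy--Schwarz decomposition of Theorem \ref{thm: Gaussian approximation}), which is organizationally different from your one-shot expansion of the ratio but leads to the same estimates; either organization is fine provided the denominator term and the role of $\alpha>1/2$ are not dropped.
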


\begin{remark}[Discussions on Assumption \ref{as: panel}]
These conditions are mostly adapted from the conditions given in Section \ref{sec: main} with $m = n$. 
We assume here that $\alpha > 1/2$ for a technical reason to bound the impact of the estimation error in $\hat{\theta}$ on $\hat{f}_{U}$. Condition (\ref{eq: bandwidth panel}) restricts $(\alpha,\beta)$ so that $\alpha+\beta > 2$, which we believe is a mild restriction. Condition (\ref{eq: additional condition}) is another technical condition to deal with the impact of the estimation error in $\hat{\theta}$. Condition (\ref{eq: additional condition}) is not restrictive; in general, $|\Ep[ e^{isU_{1}} W_{1,t,\ell}]| \leq \Ep[ | W_{1,t,\ell} |] < \infty$, so that the left hand side on (\ref{eq: additional condition}) is at most $O(h_{n}^{-2})$ and hence Condition (\ref{eq: additional condition}) is satisfied as long as $\alpha > 3/2$. However, Condition (\ref{eq: additional condition}) can be satisfied without such restrictions on $\alpha$. In many cases, $|\Ep [ e^{isU_{1}}W_{1,t,\ell}]|$ decays to zero as $|s| \to \infty$, which is the case if, e.g., $U_{1}$ and $W_{1,t,\ell}$ have a joint density, or $W_{1,t,\ell}$ is finitely discrete and the conditional distribution of $U_{1}$ given $W_{1,t,\ell}$ is absolutely continuous. So, if $|\Ep[ e^{isU_{1}} W_{1,t,\ell}]| = O(|s|^{-\gamma})$ as $|s| \to \infty$ for some $\gamma > 0$, then the left hand side on (\ref{eq: additional condition}) is 
\[
\begin{cases}
O(h_{n}^{\gamma-2}) & \text{if} \ \gamma < 2 \\
O(\log h_{n}^{-1}) & \text{if} \ \gamma=2 \\
O(1) & \text{if} \ \gamma > 2
\end{cases}
\]
and hence Condition (\ref{eq: additional condition}) is satisfied as long as $\gamma > 3/2-\alpha$. 
\end{remark}

\begin{remark}[Independence between $(V_{j,1},V_{j,2})$ and $(W_{j,1},W_{j,2})$]
\label{rem: independence}
Condition (\ref{eq: panel data condition}) assumes that $(V_{j,1},V_{j,2})$ and $(U_{j},W_{j,1},W_{j,2})$ are independent. Independence between $(V_{j,1},V_{j,2})$ and $(W_{j,1},W_{j,2})$ can be removed at the cost of more complicated regularity conditions. In the proof of Theorem \ref{thm: validity of MB panel}, this independence assumption is used to deduce that
\begin{equation}
\label{eq: implication of independence}
\Ep[ e^{itY_{1}^{\dagger}} W_{1,\ell}^{+} ] = \varphi_{\varepsilon} (t)\Ep[ e^{itU_{1}} W_{1,\ell}^{+}], \ \Ep[ e^{it\eta_{1}} W_{1,\ell}^{-}] =  \varphi_{\varepsilon}(t) \Ep[ W_{1,\ell}^{-}],
\end{equation}
where $W_{1,\ell}^{+} = (W_{1,1,\ell}+W_{1,2,\ell})/2$ and $W_{1,\ell}^{-} = (W_{1,1,\ell}-W_{1,2,\ell})/2$ for $\ell=1,\dots,d$. Now, without requiring independence between $(V_{j,1},V_{j,2})$ and $(W_{j,1},W_{j,2})$ (thereby (\ref{eq: implication of independence}) need not hold), the conclusion of Theorem \ref{thm: validity of MB panel} remains true if $\left | \frac{\Ep[ e^{it\eta_{1}} W_{1,\ell}^{-}]}{\varphi_{\varepsilon}(t)} \right |$
is bounded in $t \in \R$, and instead of (\ref{eq: additional condition}), 
\[
\int_{-h_{n}^{-1}}^{h_{n}^{-1}} \left | \frac{t\Ep[ e^{itY_{1}^{\dagger}} W_{1,\ell}^{+} ]}{\varphi_{\varepsilon}(t)} \right | dt = o \{ h_{n}^{-\alpha-1/2} (\log h_{n}^{-1})^{-1/2} \}
\]
holds for $\ell=1,\dots,d$.
\end{remark}

\subsection{Empirical illustration with panel data}

\citet{LePe03} analyze Chilean industries for the period of 1979--1986.
Following up with their studies, we analyze the distribution of the total factor productivity across firms by applying the method introduced in the previous subsection to the data set of \citeauthor{LePe03}.
We focus on the food industry, which is the largest industry in Chile among those studied by \citeauthor{LePe03}.
The data set is an unbalanced panel of $t=8$ years.
The sample sizes are
$n=709$ firms between 1979--1980,
$n=751$ firms between 1980--1981,
$n=721$ firms between 1981--1982,
$n=691$ firms between 1982--1983,
$n=639$ firms between 1984--1985, and
$n=618$ firms between 1985--1986.

Let $Y_{j,t}$ denote the logarithm of output produced by firm $j$ in year $t$.
The output is produced by using 
unskilled labor inputs denoted in logarithm by $W^{l^u}_{j,t}$,
skilled labor inputs denoted in logarithm by $W^{l^s}_{j,t}$,
capital inputs denoted in logarithm by $W^{k}_{j,t}$,
material inputs denoted in logarithm by $W^{m}_{j,t}$,
electricity inputs denoted in logarithm by $W^{e}_{j,t}$, and
fuel inputs denoted in logarithm by $W^{u}_{j,t}$.
In addition, we include two time-period dummies,
$W^{d^1}_{j,t}$ and $W^{d^2}_{j,t}$ for 1982--1983 and 1984--1986, respectively, following the time periods defined by \cite{LePe03}.
Gross-output production function in logs is written as
$$
Y_{j,t} = W_{j,t}' \theta_{0} + \omega_{j,t} + \eta_{j,t}
$$
where
$W_{j,t} = (W^{l^u}_{j,t},W^{l^s}_{j,t},W^{k}_{j,t},W^{m}_{j,t},W^{e}_{j,t},W^{u}_{j,t},W^{d^1}_{j,t},W^{d^2}_{j,t})$,
$\theta_{0} = (\theta_{lu},\theta_{ls},\theta_k,\theta_m,\theta_e,\theta_u,\theta_{d^1},\theta_{d^2})'$,
$\omega_{j,t}$ denotes the productivity, and
$\eta_{j,t}$ denotes an idiosyncratic shock.

Rational firms accumulate state variables and make static input choices endogenously in response to the current and past productivity levels, and thus $W_{j,t}$ is not statistically independent of $\omega_{j,t}$.
Under the presence of this endogeneity, various approaches \citep{OlPa96,LePe03,AcCaFr06,Wo09} are developed for identification and consistent estimation of the production function parameters $\theta_{0}$.
We use the GMM criterion of \cite{Wo09} to estimate these parameters with the third degree polynomial control of $\omega_{j,t}$ and with $W^{m}_{j,t}$ as a proxy following \citeauthor{LePe03} by pooling all the observations in the data.
Let the estimate be denoted by $\hat{\theta}$.

Focusing on any pair of two adjacent time periods, we can rewrite the gross-output production function as a panel data model with fixed effects as follows.
$$
Y_{j,t} = W_{j,t}'\theta_{0} + U_j + V_{j,t}, \qquad j = 1,...,n; t = 1,2,
$$
where $U_j = \omega_{j,1}$, $V_{j,1} = \eta_{j,1}$ and $V_{j,2} = \omega_{j,2}-\omega_{j,1}+\eta_{j,2}$.
Following the method proposed in the previous subsection, we construct the auxiliary variables
\begin{align*}
Y_j^\dagger &= (Y_{j,1} + Y_{j,2})/2 - \theta_{0}'(W_{j,1} + W_{j,2})/2 = U_j + (V_{j,1} + V_{j,2})/2
\\
\eta_j &= (Y_{j,1} - Y_{j,2})/2 - \theta_{0}'(W_{j,1} - W_{j,2})/2 = (V_{j,1} - V_{j,2})/2
\end{align*}
The independence condition between $U_{j}$ and $(V_{j,1},V_{j,2})$ is satisfied if
\begin{enumerate}[(i)]
	\item the productivity $\omega_{j,1}$ and the idiosyncratic shocks $(\eta_{j,1},\eta_{j,2})$ are independent; and
	\item the productivity $\omega_{j,1}$ and the productivity innovation $\omega_{j,2} - \omega_{j,1}$ are independet.
\end{enumerate}
These conditions are assumed in the aforementioned production function papers, and we thus maintain this primitive assumption in order to satisfy our high-level independence condition.
We substitute the above estimate $\widehat\theta$ for $\theta_{0}$ and use
\begin{align*}
\widehat Y_j^\dagger &= (Y_{j,1} + Y_{j,2})/2 - \hat{\theta}'(W_{j,1} + W_{j,2})/2
\\
\widehat \eta_j &= (Y_{j,1} - Y_{j,2})/2 - \hat{\theta}'(W_{j,1} - W_{j,2})/2
\end{align*}
to apply our method.

We continue to use the same kernel function and the same bandwidth selection rule as those
ones used for simulation studies in Section 5.
Confidence bands for $f_{\omega_{j,t}}$ are constructed for each $t=$1979--1985 using
25,000 multiplier bootstrap replications.
Figure \ref{fig:productivity1} shows the 90\% and 95\% confidence bands as well as the estimate for $f_{\omega_{j,t}}$ for years $t=$1979--1982.
Figure \ref{fig:productivity2} shows the 90\% and 95\% confidence bands as well as the estimate for $f_{\omega_{j,t}}$ for years $t=$1982--1985.
Observe that the distribution of the productivities is shifting to the right as time progresses.
Furthermore, the constructed confidence bands informatively indicate the possible densities accounting for uncertainties in data sampling.

\begin{figure}
	\centering
		\includegraphics[width=0.67\textwidth]{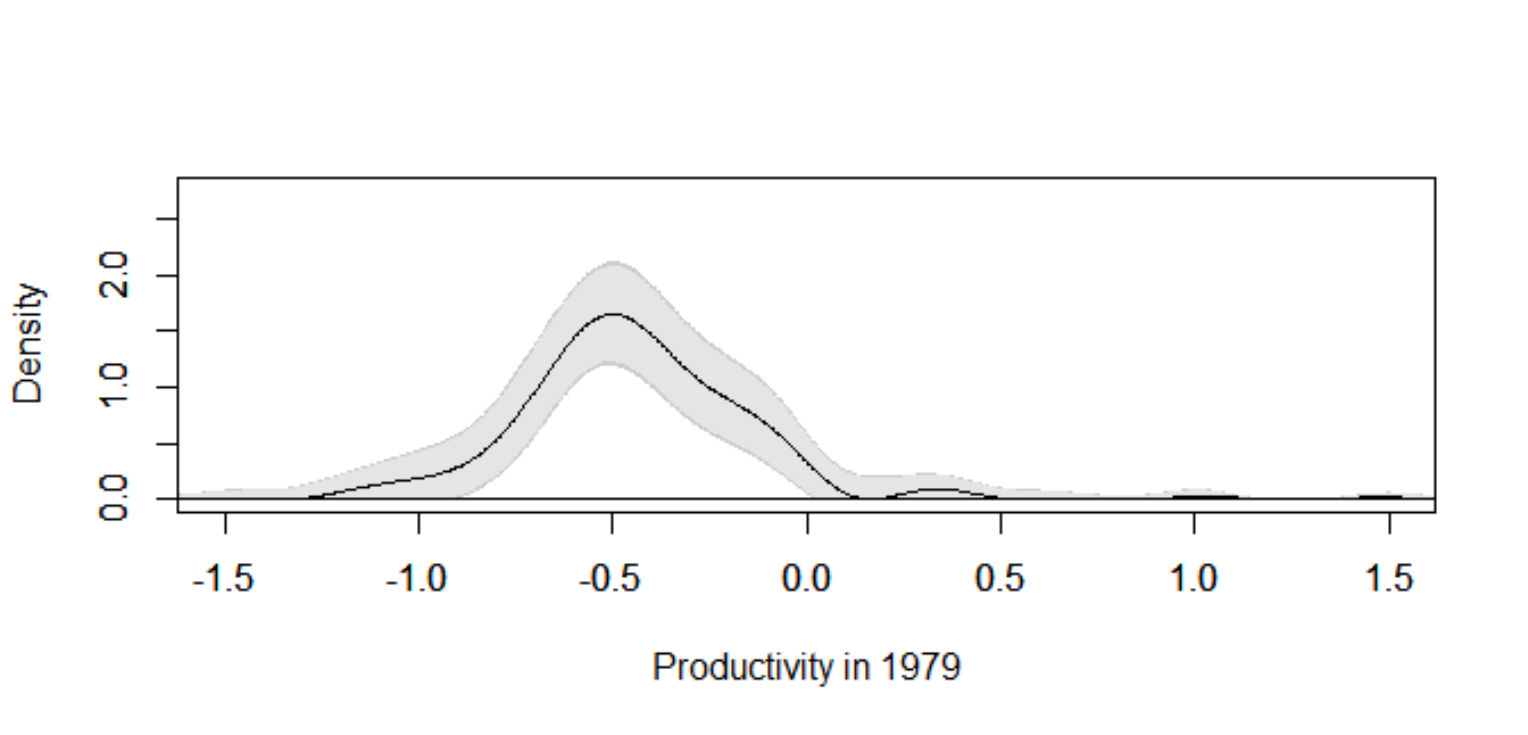}\\
		\includegraphics[width=0.67\textwidth]{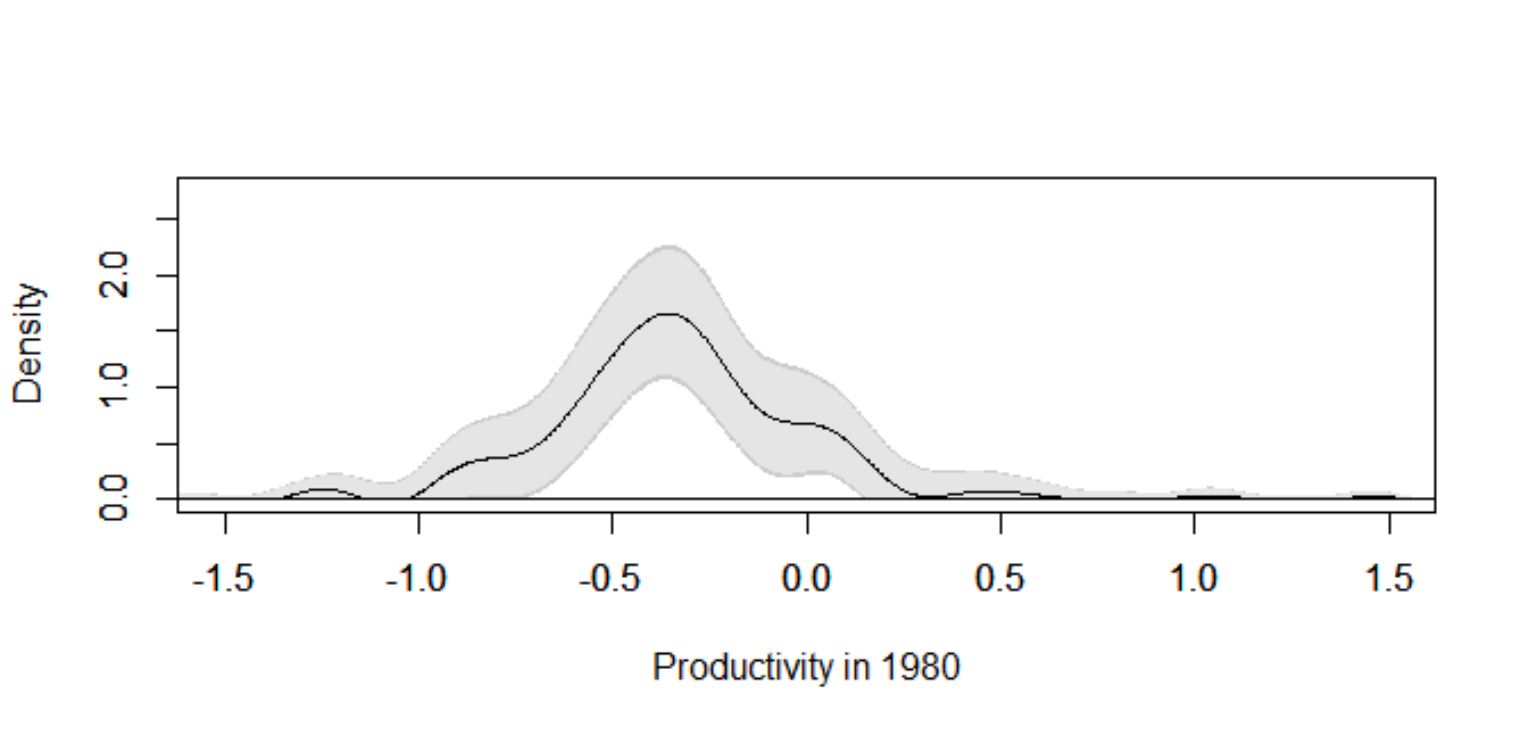}\\
		\includegraphics[width=0.67\textwidth]{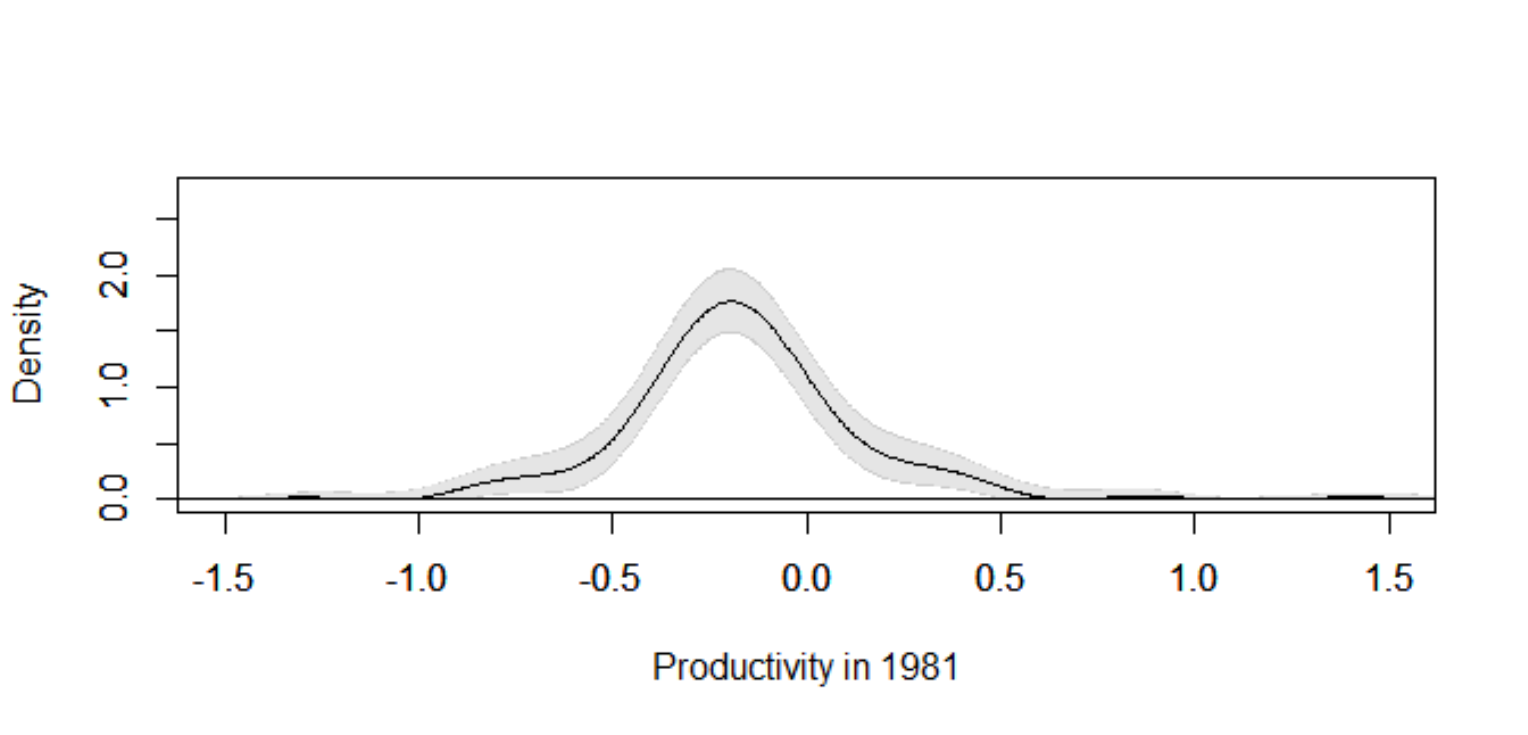}\\
		\includegraphics[width=0.67\textwidth]{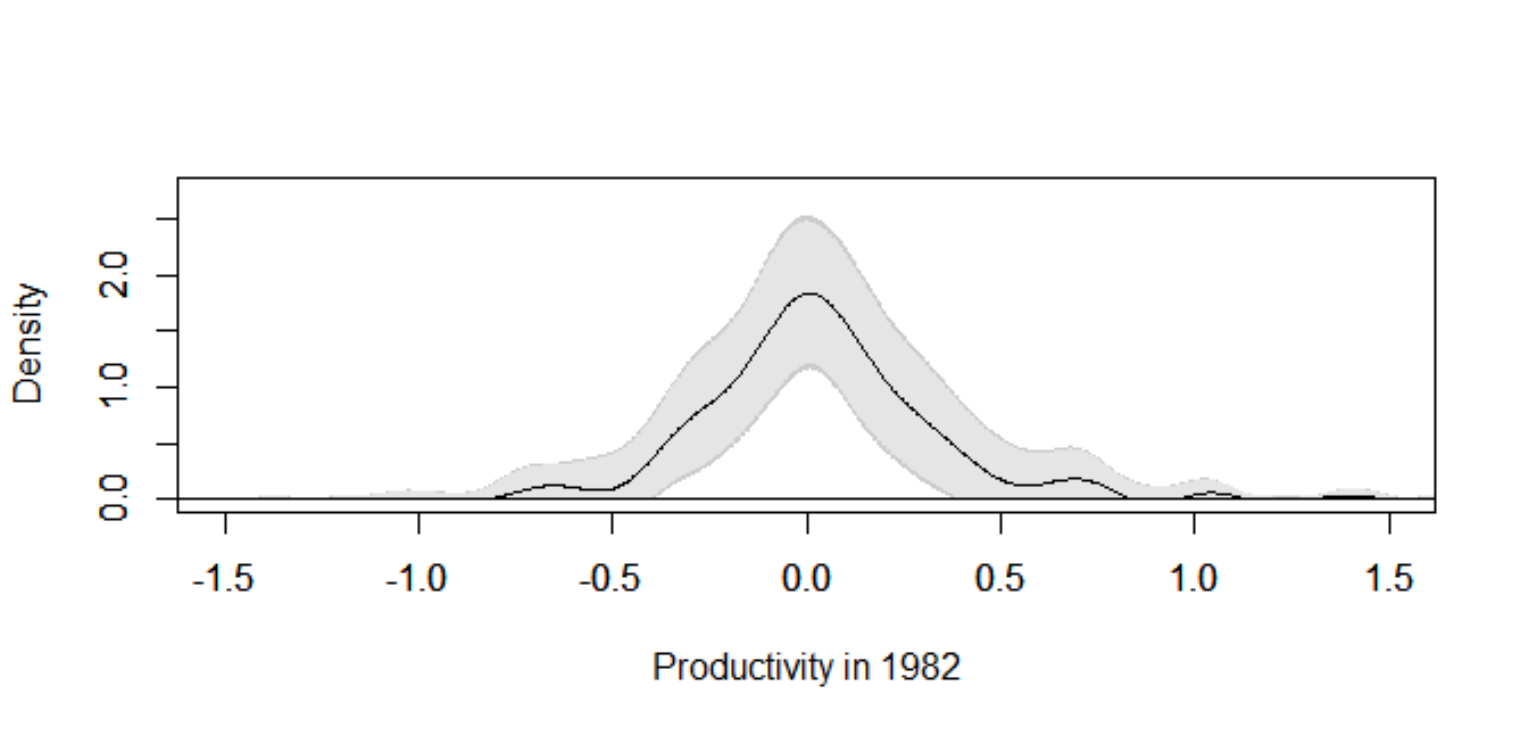}
	\caption{90\% and 95\% confidence bands for $f_{\omega_{j,t}}$ marked in light gray and dark gray, respectively. The black curve in the middle indicates the deconvolution kernel density estimate.}
	\label{fig:productivity1}
\end{figure}

\begin{figure}
	\centering
		\includegraphics[width=0.67\textwidth]{fig_productivity1982.pdf}\\
		\includegraphics[width=0.67\textwidth]{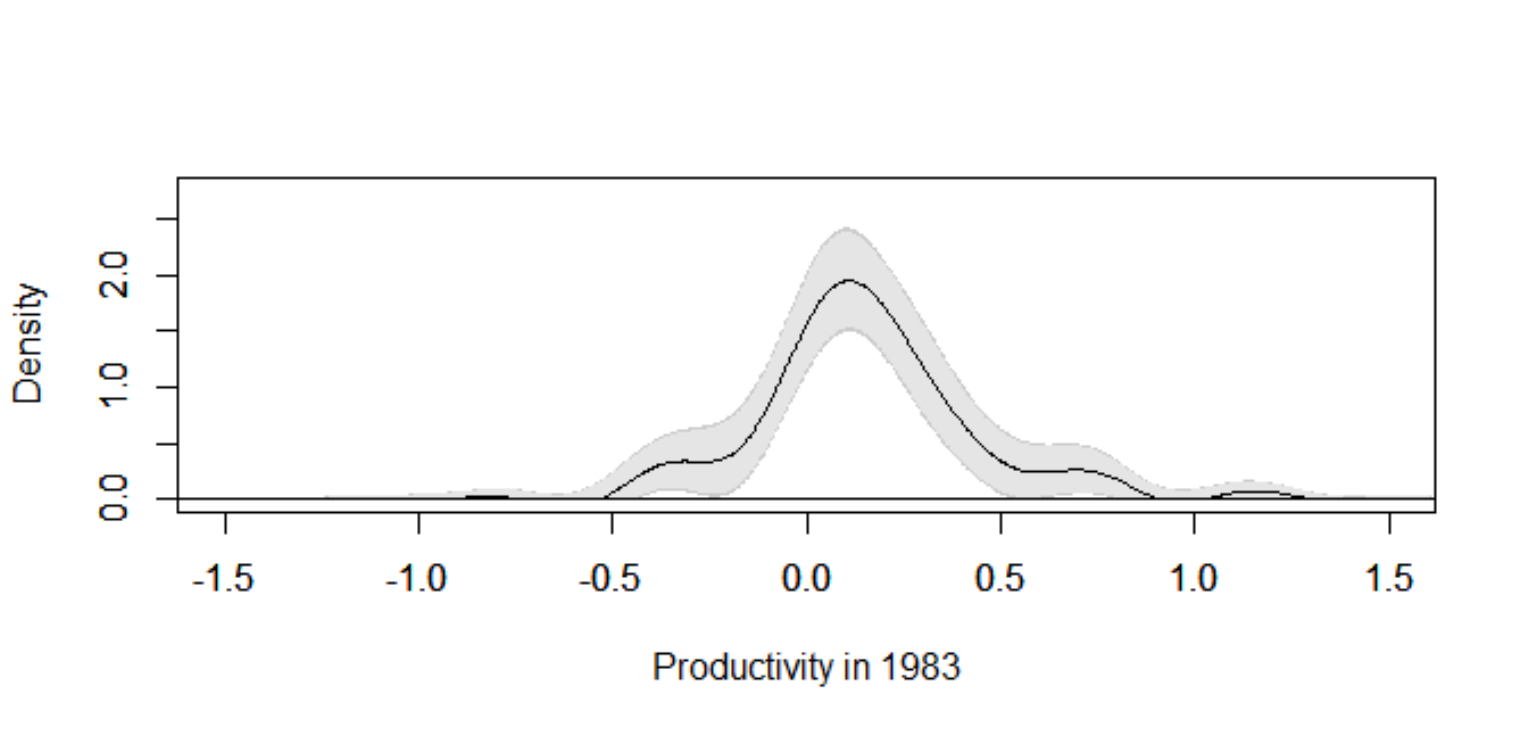}\\
		\includegraphics[width=0.67\textwidth]{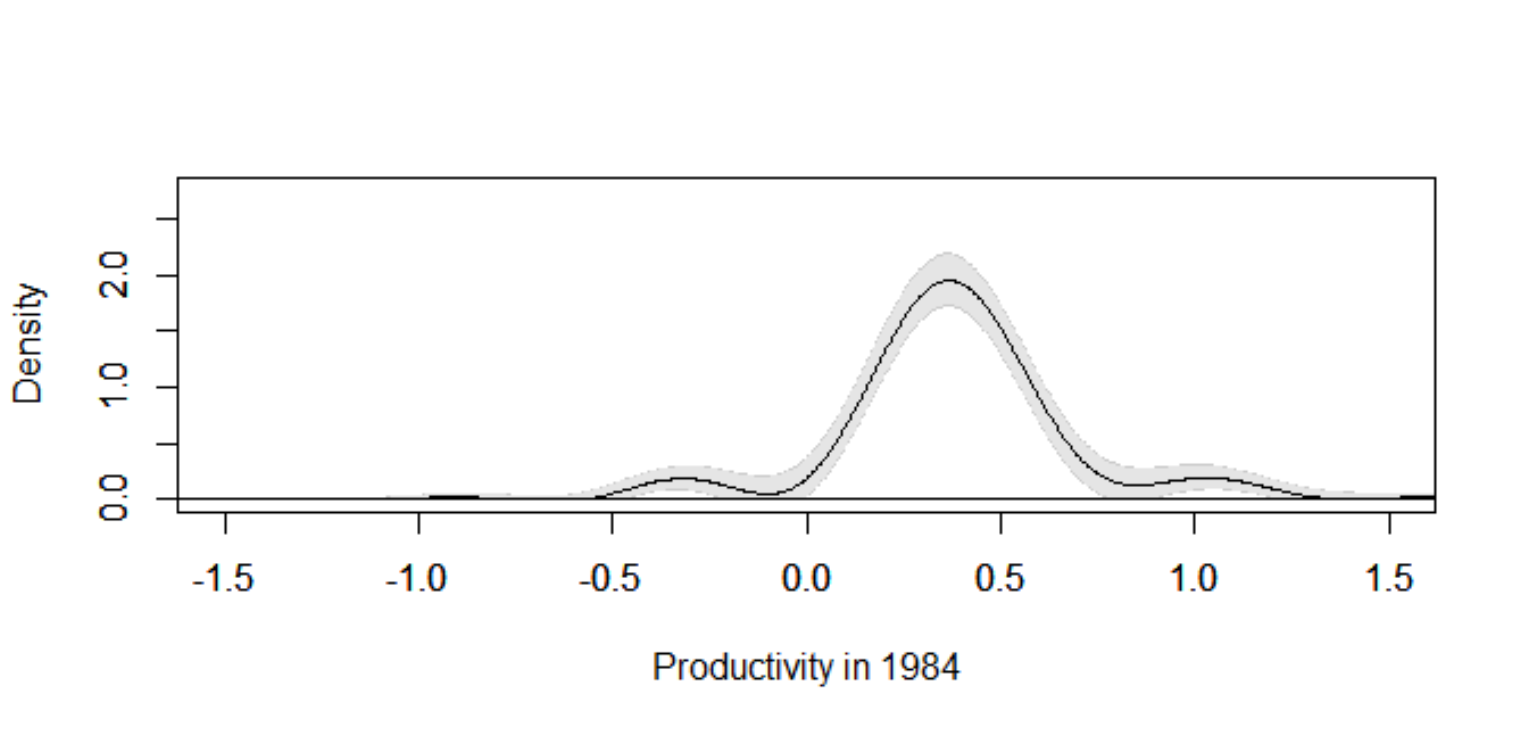}\\
		\includegraphics[width=0.67\textwidth]{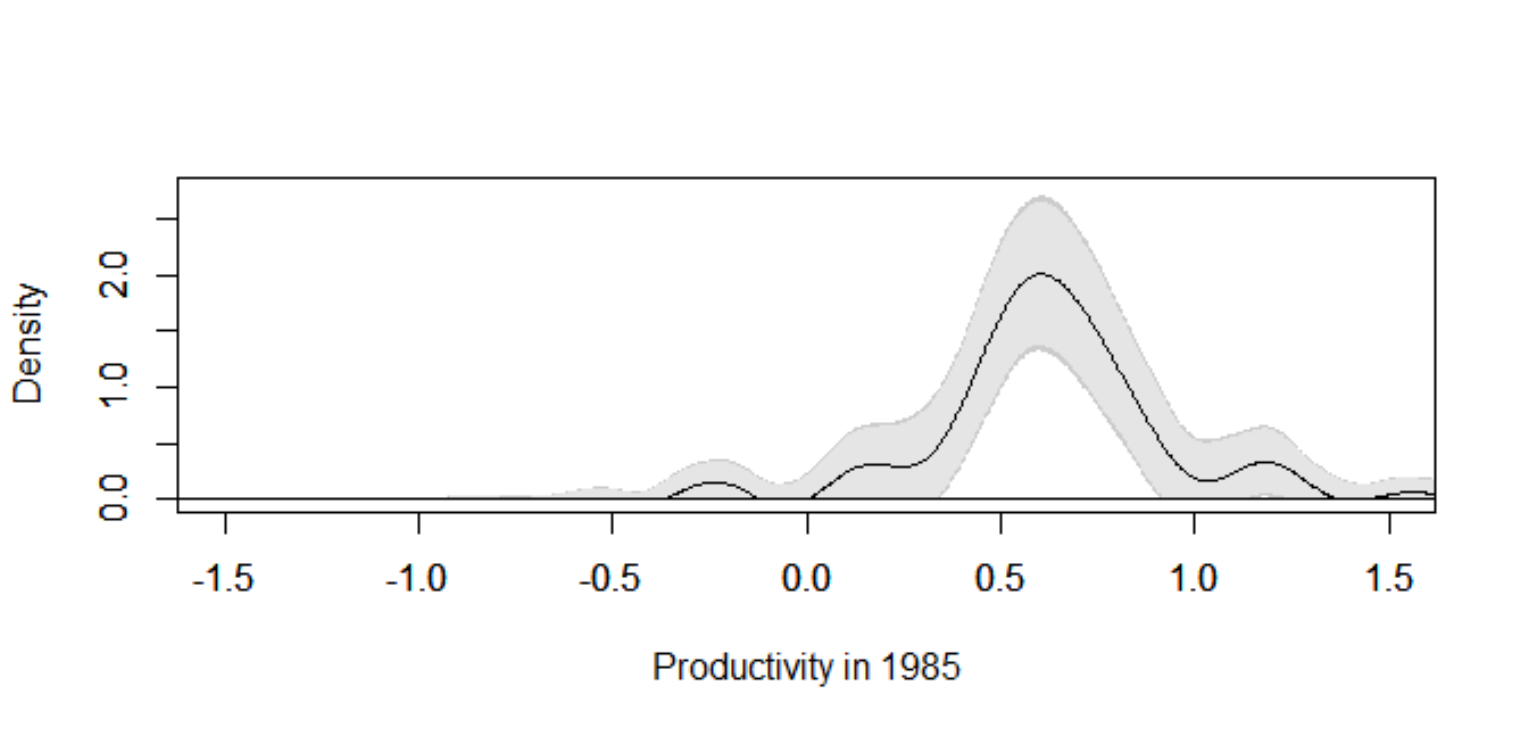}
	\caption{90\% and 95\% confidence bands for $f_{\omega_{j,t}}$ marked in light gray and dark gray, respectively. The black curve in the middle indicates the deconvolution kernel density estimate.}
	\label{fig:productivity2}
\end{figure}

\section{Extensions to super-smooth case}
\label{sec: supersmooth}

In this section, we consider extensions of the results on confidence bands to the case where the error density $f_{\varepsilon}$ is super-smooth.
While some notations were changed in Section \ref{sec: panel} to accommodate panel data models, we switch back in this section to the original notations used prior to Section \ref{sec: panel}.
We still keep Assumptions \ref{as: standard} and \ref{as: moment condition}, but require a different set of assumptions on the kernel function $K$, the bandwidth $h_{n}$, and the sample size $m$ for $f_{\varepsilon}$. It turns out that from a technical reason, we require $m/n \to \infty$ in the super-smooth case, and so Example \ref{ex: canonical} is formally not covered in the super-smooth case. Still, we believe that the extensions to the super-smooth case are of some interest. 

We modify Assumptions \ref{as: kernel}, \ref{as: ill-posedness}, and \ref{as: rate} as follows. First, for the kernel  and error characteristic functions, we assume the following conditions. 

\begin{assumption}
\label{as: kernel supersmooth}
Let $K: \R \to \R$ be a kernel function  such that its Fourier transform $\varphi_{K}$ is even (i.e., $\varphi_{K}(-t)=\varphi_{K}(t)$) and has support $[-1,1]$. Furthermore, there exist  constants  $C_{2} > 0$ and $\lambda \geq 0$ such that $\varphi_{K}(1-t) = C_{2} t^{\lambda} + o(t^{\lambda})$ as $t \downarrow 0$.
\end{assumption}

\begin{assumption}
\label{as: supersmooth}
The error characteristic function $\varphi_{\varepsilon}$ does not vanish on $\R$, and there exist constants $C_{3} > 0, \gamma > 1, \gamma_{0} \in \R, \nu > 0$ such that 
$\varphi_{\varepsilon}(t) = C_{3} (1+o(1) )|t|^{\gamma_{0}}e^{-\nu |t|^{\gamma}}$ as $|t| \to \infty$.
\end{assumption}

These assumptions are adapted from \cite{EsUh05}. Assumption \ref{as: supersmooth} covers cases where the error characteristic function decays exponentially fast as $|t| \to \infty$, thereby covering cases where the error density is super-smooth. 
However, Assumption \ref{as: supersmooth} is more restrictive than  standard super-smoothness conditions; e.g., it excludes the Cauchy error. This assumption is needed to derive a lower bound on $\sigma_{n}^{2}(x)$; see the following discussion. 

Assumptions \ref{as: kernel supersmooth} and \ref{as: supersmooth}, together with the assumption that  $\Ep[Y^{2}] < \infty$, ensure that the variance function $\sigma_{n}^{2}(x) = \Var (K_{n}((x-Y)/h_{n}))$ is expanded as
\begin{equation}
\sigma_{n}^{2}(x) =(1+o(1)) \frac{C_{2}^{2}}{2C_{3}^{2}\pi^{2}} (\nu \gamma)^{-2\lambda-2} (\Gamma (\lambda+1))^{2}  h_{n}^{2\gamma (1+\lambda) + 2\gamma_{0}} e^{2\nu h_{n}^{-\gamma}} \label{eq: variance supersmooth} 
\end{equation}
as $n \to \infty$; see the proof of Theorem 1.5 in \cite{EsUh05}. It is not difficult to verify from their proof that  $o(1)$ in (\ref{eq: variance supersmooth}) is uniform in $x \in I$ for any compact interval $I \subset \R$. 
It is worthwhile to point out that, in contrast to the ordinary smooth case, the lower bound on $\sigma_{n}^{2}(x)$ in (\ref{eq: variance supersmooth}) does not explicitly depend on $x$ nor $f_{Y}$. 
Further, Assumption \ref{as: supersmooth} implies that 
\begin{equation}
\inf_{|t| \leq h_{n}^{-1}} | \varphi_{\varepsilon}(t) | \geq C_{3}  (1-o(1)) h_{n}^{-\gamma_{0}} e^{-\nu h_{n}^{-\gamma}} \label{eq: lower bound on chf supersmooth}
\end{equation}
as $n \to \infty$.  It turns out that (\ref{eq: variance supersmooth}) and (\ref{eq: lower bound on chf supersmooth}) are the only differences  to take care of when proving the analogues of Theorems \ref{thm: Gaussian approximation} and \ref{thm: multiplier bootstrap} in the super-smooth case. Finally, we modify Assumption \ref{as: rate} as follows.

\begin{assumption}
\label{as: rate supersmooth}
(a)  $\frac{(\log h_{n}^{-1})^{2}}{nh_{n}^{4\gamma (1+\lambda)}} \to 0$. (b) $\frac{n \log h_{n}^{-1}}{mh_{n}^{2\gamma (1+\lambda) -2}}  \bigvee \frac{e^{2\nu h{n}^{-\gamma}} (\log h_{n}^{-1})^{2}}{mh_{n}^{4\gamma (1+\lambda)-2\gamma_{0}}} \to 0$.
%\[
%m^{-1} = o \Big [ \left \{ n^{-1} h_{n}^{2\gamma (1+\lambda) -2} (\log h_{n}^{-1})^{-1} \right \} \wedge \left \{ h_{n}^{4\gamma (1+\lambda) - 2\gamma_{0}} e^{-2\nu h_{n}^{-\gamma}} (\log h_{n}^{-1})^{-2} \right \} \Big].
%\]
\end{assumption}

The requirement that $\frac{n \log h_{n}^{-1}}{mh_{n}^{2\gamma (1+\lambda) -2}} \to 0$ implies that we at least need $m/n \to \infty$. This condition is used to ensure that the effect of estimating $\varphi_{\varepsilon}$ is negligible. To be precise, 
in our proof, a bound on $\| \hat{f}_{X} - \hat{f}_{X}^{*} \|_{\R}$ involves a term of order $m^{-1/2}h_{n}^{\gamma_{0}} e^{\nu h_{n}^{-\gamma}}$, which has to be of smaller order than $n^{-1/2}h_{n}^{\gamma (1+\lambda) + \gamma_{0}-1} e^{\nu h_{n}^{-\gamma}}(\log h_{n}^{-1})^{-1/2}$. 
Technically, this problem happens because the ratio of $1/\inf_{|t| \leq h_{n}^{-1}} | \varphi_{\varepsilon}(t)|$ over $\inf_{x \in I} \sigma_{n}(x)$ is larger in the super-smooth case than that in the ordinary smooth case; the ratio is $O(h_{n}^{-\gamma (1+\lambda)})$ in the super-smooth case, while it is $O(h_{n}^{-1/2})$ in the ordinary smooth case. It is not known at the current moment whether we could relax this condition on $m$ in the super-smooth case.

In any case, these assumptions guarantee that the conclusions of Theorems \ref{thm: Gaussian approximation} and \ref{thm: multiplier bootstrap}, except for the result on the width of the band,  hold true in the super-smooth case. 

\begin{theorem}
\label{thm: supersmooth}
Suppose that Assumptions \ref{as: standard}, \ref{as: moment condition}, and \ref{as: kernel supersmooth}--\ref{as: rate supersmooth} are satisfied. Let $I \subset \R$ be any compact interval, and suppose in addition that $\Ep[Y^{2}] < \infty$. 
Then the conclusions of Theorems \ref{thm: Gaussian approximation} and \ref{thm: multiplier bootstrap} , except for the result on the width of the band, hold true. 
\end{theorem}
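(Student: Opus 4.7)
The strategy is to mimic the proofs of Theorems~\ref{thm: Gaussian approximation} and \ref{thm: multiplier bootstrap}, replacing every ordinary-smooth quantitative bound with its super-smooth analogue as supplied by (\ref{eq: variance supersmooth}) and (\ref{eq: lower bound on chf supersmooth}). Concretely, the rates $h_{n}^{-\alpha}$ (on $1/|\varphi_{\varepsilon}|$) and $h_{n}^{-\alpha+1/2}$ (on $1/\sigma_{n}$) used in Section~\ref{sec: main} get replaced by $h_{n}^{-\gamma_{0}} e^{\nu h_{n}^{-\gamma}}$ and $h_{n}^{-\gamma(1+\lambda)-\gamma_{0}} e^{\nu h_{n}^{-\gamma}}$, respectively, and Assumption~\ref{as: rate supersmooth} has been calibrated to make the resulting error terms negligible.

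First, I would rebuild the infrastructure lemmas behind Theorem~\ref{thm: Gaussian approximation}. (i) From Assumption~\ref{as: supersmooth} one obtains (\ref{eq: lower bound on chf supersmooth}). (ii) The variance asymptotics (\ref{eq: variance supersmooth}) -- which uses $\E[Y^{2}]<\infty$ and follows from Esfandiari and Uhler -- gives $\inf_{x\in I}\sigma_{n}(x)\gtrsim h_{n}^{\gamma(1+\lambda)+\gamma_{0}}e^{\nu h_{n}^{-\gamma}}$ uniformly on $I$, replacing Lemma~\ref{lem: variance}. (iii) Differentiating the Fourier representation of $K_{n}$ and exploiting the boundary behaviour $\varphi_{K}(1-t)=C_{2}t^{\lambda}+o(t^{\lambda})$ from Assumption~\ref{as: kernel supersmooth}, one gets $\|K_{n}\|_{\infty}\lesssim h_{n}^{\gamma_{0}}e^{\nu h_{n}^{-\gamma}}$ and a quadratic-variation bound of the same exponential order, which through Lemma~1 of Gin\'e--Nickl (2009) shows that $\{y\mapsto K_{n}((x-y)/h_{n}):x\in I\}$ is of VC-type with envelope $\lesssim h_{n}^{\gamma_{0}}e^{\nu h_{n}^{-\gamma}}$. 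These are precisely the ingredients needed to feed the intermediate Gaussian approximation theorem of Chernozhukov--Chetverikov--Kato.

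Second -- and this is the main obstacle -- I would control the pre-estimation error $\|\hat{f}_{X}-\hat{f}_{X}^{*}\|_{\R}$. Following Step~2 of the proof of Theorem~\ref{thm: Gaussian approximation}, one expands $\hat{K}_{n}-K_{n}$ via a first-order perturbation of $1/\hat{\varphi}_{\varepsilon}$ around $1/\varphi_{\varepsilon}$, together with the uniform rate for $\hat{\varphi}_{\varepsilon}-\varphi_{\varepsilon}$ from Lemma~\ref{lem: rate ecf} and a remainder bounded using (\ref{eq: lower bound on chf supersmooth}). The leading contribution is of order $m^{-1/2}h_{n}^{\gamma_{0}}e^{\nu h_{n}^{-\gamma}}\sqrt{\log h_{n}^{-1}}$, which the bandwidth condition~\ref{as: rate supersmooth}(b) has been calibrated to make $o\bigl(n^{-1/2}h_{n}^{\gamma(1+\lambda)+\gamma_{0}-1}e^{\nu h_{n}^{-\gamma}}(\log h_{n}^{-1})^{-1/2}\bigr)$, i.e., negligible relative to the target stochastic scale $\sigma_{n}(x)/(\sqrt{n}h_{n})$. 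This is the hard step because in the super-smooth case the ratio $(\inf_{|t|\le h_{n}^{-1}}|\varphi_{\varepsilon}(t)|)^{-1}/\inf_{x\in I}\sigma_{n}(x)$ grows like $h_{n}^{-\gamma(1+\lambda)}$ rather than like $h_{n}^{-1/2}$ as in the ordinary-smooth case, which is exactly why we are forced to impose $m/n\to\infty$.

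Third, with the VC-type structure, the variance lower bound, and the pre-estimation error under control, I would apply the intermediate Gaussian approximation of CCK14a/16 to the ``oracle'' empirical process built from $K_{n}$, transfer the conclusion to $\hat{Z}_{n}$ by absorbing the negligible contributions of $\hat{K}_{n}-K_{n}$ and of $\hat{\sigma}_{n}-\sigma_{n}$ (a uniform consistency statement for $\hat{\sigma}_{n}/\sigma_{n}$ follows from the same VC/envelope estimates applied to $K_{n}^{2}$), and conclude the analogue of Theorem~\ref{thm: Gaussian approximation}. The analogue of Theorem~\ref{thm: multiplier bootstrap} follows by applying the multiplier-bootstrap comparison theorem of CCK14b to the same function class, comparing the conditional covariance of $\hat{Z}_{n}^{\xi}$ to that of $Z_{n}^{G}$ and again using Assumption~\ref{as: rate supersmooth} to swallow the estimation error in $\hat{K}_{n}$ and $\hat{\sigma}_{n}$. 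The width-of-band conclusion of Theorem~\ref{thm: multiplier bootstrap} is not restated here because in the super-smooth regime the width involves the exponential factor $e^{\nu h_{n}^{-\gamma}}$ and does not admit the clean polynomial form of the ordinary-smooth case.
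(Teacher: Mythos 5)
Your proposal follows the same path as the paper: rebuild the infrastructure lemmas with the super-smooth rates from (\ref{eq: variance supersmooth}) and (\ref{eq: lower bound on chf supersmooth}), control the pre-estimation error, and feed the result into the intermediate Gaussian and multiplier-bootstrap comparison theorems. The paper's own proof is in fact much terser than yours --- it simply says the argument is ``almost identical'' to the ordinary-smooth case with the new rates plugged in --- so your exposition actually fills in more of the detail than the paper does, and your identification of the ratio $\bigl(\inf_{|t|\le h_n^{-1}}|\varphi_\varepsilon(t)|\bigr)^{-1}\big/\inf_{x\in I}\sigma_n(x)$ as the obstacle forcing $m/n\to\infty$ matches the paper's own discussion.

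There is, however, one genuine gap. The single step the paper actually writes out in detail is the verification that the variance expansion (\ref{eq: variance supersmooth}) holds \emph{uniformly} in $x\in I$; the van Es--Uh result you cite (note: van Es and Uh, not ``Esfandiari and Uhler'') is a pointwise asymptotic. You assert ``$\inf_{x\in I}\sigma_n(x)\gtrsim h_n^{\gamma(1+\lambda)+\gamma_0}e^{\nu h_n^{-\gamma}}$ uniformly on $I$'' without verification, but that uniformity is not free: it requires showing that $\Var\bigl(\cos((x-Y)/h_n)\bigr)\to\tfrac12$ uniformly in $x$, which the paper obtains by expanding $\cos^2((x-Y)/h_n)$ through the angle-addition formula and invoking the Riemann--Lebesgue lemma on $\Ep[\cos(2Y/h_n)]$ and $\Ep[\sin(2Y/h_n)]$. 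A secondary, more cosmetic inaccuracy: you attribute the envelope bound $\|K_n\|_\infty\lesssim h_n^{\gamma_0}e^{\nu h_n^{-\gamma}}$ and the quadratic-variation/VC-type bound to the boundary behaviour $\varphi_K(1-t)=C_2t^\lambda+o(t^\lambda)$, but in fact those bounds follow purely from the lower bound (\ref{eq: lower bound on chf supersmooth}) and the compact support of $\varphi_K$ (exactly as Lemmas \ref{lem: VC type}--\ref{lem: quadratic variation} show, with $r_n=1/\inf_{|t|\le h_n^{-1}}|\varphi_\varepsilon(t)|$); the boundary behaviour of $\varphi_K$ is only needed to derive the \emph{lower} bound (\ref{eq: variance supersmooth}) on $\sigma_n^2(x)$.
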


\begin{remark}[Comparisons with \cite{EsGu08}]
\label{rem: van Es}
\cite{EsGu08} prove that, under the assumptions that $f_{\varepsilon}$ is known and satisfies Assumption \ref{as: supersmooth} (of the present paper) with $\gamma=2$, 
\[
\frac{\sqrt{n}}{h_{n}^{2 (1+\lambda) + \gamma_{0}-1}e^{\nu h_{n}^{-2}}} \| \hat{f}_{X}^{*} (\cdot) - \Ep[\hat{f}_{X}^{*}(\cdot)] \|_{[0,1]} \stackrel{d}{\to} \frac{C_{2}}{\sqrt{2}C_{3}\pi} (2\nu)^{-\lambda-1} \Gamma (\lambda+1) V,
\]
where $V$ follows the Rayleigh distribution, i.e., $V$ is a random variable having density $f_{V}(v) = v e^{-v^{2}/2}1_{[0,\infty)}(v)$ \citep[see][for the precise regularity conditions]{EsGu08}. Interestingly, the limit distribution differs from Gumbel distributions. 

Despite this non-standard feature, Theorem \ref{thm: supersmooth} shows that the multiplier bootstrap ``works'', i.e., the conditional distribution of $\| \hat{Z}_{n}^{\xi} \|_{I}$ can consistently estimate the distribution of $\| \hat{Z}_{n} \|_{I}$ in the sense that $\sup_{z \in \R} | \Pr \{ \| \hat{Z}_{n}^{\xi} \|_{I} \leq z \mid \mathcal{D}_{n} \} - \Pr \{ \| \hat{Z}_{n} \|_{I} \leq z \} | \stackrel{\Pr}{\to} 0$. Further, Theorem \ref{thm: supersmooth} extends the admissible range of $\gamma$ compared with the result of \cite{EsGu08}.  
\end{remark}

If $f_{X}$ belongs to a H\"{o}lder ball $\Sigma (\beta,B)$, then we have the following corollary. 

\begin{corollary}
\label{cor: supersmooth}
Assume all the conditions of Theorem \ref{thm: supersmooth}. Further, suppose that $f_{X} \in \Sigma (\beta,B)$ for some $\beta > 0, B > 0$, and that Condition (\ref{eq: higher order kernel}) is satisfied for the kernel function $K$ where $k$ is the integer such that $\beta < k \leq \beta+1$. Consider the multiplier bootstrap confidence band $\hat{\mathcal{C}}_{n}$ defined in (\ref{eq: multiplier bootstrap confidence band}). Then as $n \to \infty$, $\Pr \{ f_{X}(x) \in \hat{\mathcal{C}}_{n}(x) \ \forall x \in I \} \to 1-\tau$,
provided that $
h_{n}^{\beta+1-\gamma (1+\lambda) - \gamma_{0}} e^{-\nu h_{n}^{-\gamma}}\sqrt{n \log h_{n}^{-1}} \to 0$.
\end{corollary}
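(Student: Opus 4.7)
The plan is to reduce the corollary to Theorem \ref{thm: supersmooth} by controlling the deterministic bias of $\hat f_X$ relative to its noise level. First, I rewrite the coverage event as $\|\tilde Z_n\|_I \leq \hat c_n(1-\tau)$, where $\tilde Z_n(x) = \sqrt{n}h_n(f_X(x) - \hat f_X(x))/\hat\sigma_n(x)$, and split $\tilde Z_n = -\hat Z_n + B_n$ with $B_n(x) = \sqrt{n}h_n b_n(x)/\hat\sigma_n(x)$ and $b_n(x) = f_X(x) - \Ep[\hat f_X^*(x)]$. Since $\bigl|\|\tilde Z_n\|_I - \|\hat Z_n\|_I\bigr| \leq \|B_n\|_I$, validity follows once I show $\|B_n\|_I\sqrt{\log h_n^{-1}} = o_\Pr(1)$, and then combine with Gaussian anti-concentration and the Gaussian approximation already furnished by Theorem \ref{thm: supersmooth}.

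For the bias, the Fourier identity $\varphi_Y(t)\varphi_K(th_n)/\varphi_\varepsilon(t) = \varphi_X(t)\varphi_K(th_n)$ gives $\Ep[\hat f_X^*(x)] = (K_{h_n} * f_X)(x)$ with $K_{h_n}(u) = h_n^{-1}K(u/h_n)$, so the standard Taylor expansion under $f_X \in \Sigma(\beta,B)$ together with the $(k+1)$-th order condition (\ref{eq: higher order kernel}) yields $\|b_n\|_\R = O(h_n^\beta)$. For the noise, the uniform expansion (\ref{eq: variance supersmooth}) gives $\inf_{x \in I}\sigma_n(x) \geq c\, h_n^{\gamma(1+\lambda)+\gamma_0} e^{\nu h_n^{-\gamma}}$ for some $c > 0$ and all large $n$; the uniform consistency $\sup_{x \in I}|\hat\sigma_n(x)/\sigma_n(x) - 1| \stackrel{\Pr}{\to} 0$, already established en route to Theorem \ref{thm: supersmooth}, transfers the same lower bound to $\hat\sigma_n$ with probability tending to one. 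Combining, $\|B_n\|_I = O_\Pr\bigl(\sqrt{n}\,h_n^{\beta+1-\gamma(1+\lambda)-\gamma_0}e^{-\nu h_n^{-\gamma}}\bigr)$, which is precisely the corollary's bandwidth condition divided by $\sqrt{\log h_n^{-1}}$.

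For the anti-concentration step, because $Z_n^G$ is a mean-zero tight Gaussian in $\ell^\infty(I)$ with $\Var(Z_n^G(x)) = 1$ for every $x \in I$, the Chernozhukov-Chetverikov-Kato anti-concentration inequality gives $\sup_{z \in \R}\Pr\{|\|Z_n^G\|_I - z| \leq \varepsilon\} \leq C\varepsilon(1+\Ep\|Z_n^G\|_I)$ for every $\varepsilon > 0$. A VC-type argument parallel to the ordinary smooth case (Lemma 1 of \cite{GiNi09} applied via the quadratic-variation bound for $\hat K_n$, with the scale now governed by (\ref{eq: variance supersmooth})) yields $\Ep\|Z_n^G\|_I = O(\sqrt{\log h_n^{-1}})$. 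Combining with the previous paragraph makes the anti-concentration gap negligible uniformly in the $\mathcal{D}_n$-measurable cutoff $\hat c_n(1-\tau)$, so
\[
\Pr\{\|\tilde Z_n\|_I \leq \hat c_n(1-\tau)\} - \Pr\{\|\hat Z_n\|_I \leq \hat c_n(1-\tau)\} \to 0.
\]
Since the second probability tends to $1-\tau$ by Theorem \ref{thm: supersmooth}, the corollary follows.

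The main obstacle will be verifying $\Ep\|Z_n^G\|_I = O(\sqrt{\log h_n^{-1}})$ in the super-smooth setting: one must reprise the VC-type entropy / quadratic-variation argument with the correctly rescaled kernel $K_n$, whose supremum norm and quadratic variation now scale exponentially like $h_n^{\gamma_0} e^{\nu h_n^{-\gamma}}$ rather than polynomially in $h_n$, and confirm that the polynomial-entropy bound still delivers a logarithmic rate after dividing by $\inf_{x \in I}\sigma_n(x)$. The remaining pieces -- the bias expansion, the variance expansion, and the uniform consistency of $\hat\sigma_n$ -- are direct consequences of standard deconvolution computations and of ingredients already established in the proof of Theorem \ref{thm: supersmooth}.
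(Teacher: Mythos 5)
Your proposal is correct and essentially mirrors the paper's argument: reduce the corollary to Theorem~\ref{thm: supersmooth} by showing the deterministic bias is $O(h_n^{\beta})$, divide by the uniform variance lower bound from (\ref{eq: variance supersmooth}), and absorb the resulting $o_{\Pr}\{(\log h_n^{-1})^{-1/2}\}$ perturbation via the Gaussian anti-concentration inequality (\ref{eq: AC bound}). One small point of calibration: the entropy bound $\Ep\|Z_n^G\|_I = O(\sqrt{\log h_n^{-1}})$ that you flag as the main obstacle actually belongs to the proof of Theorem~\ref{thm: supersmooth} (whose conclusions you are entitled to assume), and it follows with no extra work because Lemmas~\ref{lem: VC type}--\ref{lem: quadratic variation} already hold at scale $r_n = 1/\inf_{|t|\leq h_n^{-1}}|\varphi_\varepsilon(t)|$ for any non-vanishing $\varphi_\varepsilon$, so the envelope $r_n/\inf_I\sigma_n \lesssim h_n^{-\gamma(1+\lambda)}$ and Dudley's integral still give the logarithmic rate.
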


\section{Conclusion}
\label{sec: conclusion}
The previous literature on inference in deconvolution has focused on the case where the error distribution is known. 
In econometric applications, the assumption that the error distribution is known is unrealistic, and the present paper fills this important void. 
Specifically, we develop a method to construct uniform confidence bands in deconvolution when the error distribution is unknown and needs to be estimated with an auxiliary sample from the error distribution.
The auxiliary sample may directly come from validation data, such as administrative data, or can be constructed from panel data with a symmetric error distribution.

We first focus on the baseline setting where the error density is ordinary smooth. 
The construction is based upon the ``intermediate'' Gaussian approximation and the Gaussian multiplier bootstrap, instead of explicit limit distributions such as Gumbel distributions.
This approach allows us to prove the validity of the proposed multiplier bootstrap confidence band under mild regularity conditions. 
Simulation studies demonstrate that the multiplier bootstrap confidence bands perform well in the finite sample. 
We apply our method to the Outer Continental Shelf (OCS) Auction Data and draw confidence bands for the density of common values of mineral rights on oil and gas tracts.
We also discuss an application of our main result to additive fixed-effect panel data models.
As an empirical illustration of the panel analysis, we draw confidence bands for the density of the total factor productivity in the food manufacturing industry in Chile following the analysis of \cite{LePe03}.
Finally, we present extensions of the baseline theoretical results to the case of super-smooth error densities.

Throughout this paper, we suppose the availability of an auxiliary sample, an example of which is panel data with a symmetric error distribution similarly to \cite{HoMa96}.
The estimator of \cite{LiVu98}, on the other hand, relaxes the assumption of a symmetric error distribution.
An extension of our results on the method of inference to this case is left for future research.

\appendix 

\section{Proofs}

In what follows, the notation $\lesssim$ signifies that the left hand side is bounded by the right hand side up to some constant independent of $n$ and $x$.

\subsection{Proof of Theorem \ref{thm: Gaussian approximation}}
\label{sec: proofs}

%In this section, we prove Theorem \ref{thm: Gaussian approximation}. 
%Proofs of all the other results are found in the supplementary material. 

We first state the following lemmas which will be used in the proof of Theorem \ref{thm: Gaussian approximation}. 
%The proofs of the following lemmas are found in the supplemental material (Appendices \ref{sec: proof of lem: VC type} and \ref{sec: proof of lem: variance}).
For a class of measurable functions $\mF$ on a measurable space $(S,\mathcal{S})$ and a probability measure $Q$ on $\mathcal{S}$, let $N(\mF,\| \cdot \|_{Q,2},\delta)$ denote the $\delta$-covering number for $\mF$ with respect to the $L^{2}(Q)$-seminorm  $\| \cdot \|_{Q,2}$; see Section 2.1 in \cite{vaWe96} for details.

\begin{lemma}
\label{lem: VC type}
Let $K$ be a kernel function on $\R$ such that $\varphi_{K}$ is  supported in $[-1,1]$, and suppose that  $\varphi_{\varepsilon}$ does not vanish on $\R$. Let $r_{n}=1/\inf_{|t| \leq h_{n}^{-1}} | \varphi_{\varepsilon}(t)|$.
Consider the class of functions $\mathcal{K}_{n} = \{ y \mapsto K_{n}((x-y)/h_{n}) : x \in \R \}$, where $K_{n}$ denotes the corresponding deconvolution kernel. Then there exist constants $A,v > 0$ independent of $n$ such that for all $n \geq 1$,
\[
\sup_{Q} N(\mathcal{K}_{n},\| \cdot \|_{Q,2},  r_{n}\delta ) \leq (A/\delta)^{v}, \ 0 < \forall \delta \leq 1,
\]
where $\sup_{Q}$ is taken over all Borel probability measures $Q$ on $\R$. 
\end{lemma}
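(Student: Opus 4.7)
The core step is to recognize $\mathcal{K}_n$ as the class of real translates of a single function and then invoke a general entropy bound for such families. A change of variables, setting $g_n(z) := K_n(-z/h_n)$, shows that each element of $\mathcal{K}_n$ has the form $y\mapsto g_n(y-x)$, so $\mathcal{K}_n = \{g_n(\cdot-x): x\in\R\}$. For translates of a bounded function of bounded quadratic variation, Lemma 1 of Gin\'e and Nickl (2009) (the reference flagged in Remark \ref{rem: comparison}) gives a covering number bound of the form
\[
\sup_{Q} N\bigl(\{g(\cdot-x): x\in\R\},\|\cdot\|_{Q,2},\delta\|g\|_{\infty}\bigr) \leq (A/\delta)^{v},
\]
where $A$ and $v$ depend only on the scale-invariant ratio of the quadratic variation of $g$ to $\|g\|_{\infty}^{2}$. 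The goal is therefore to verify this hypothesis for $g_n$ with constants \emph{uniform in $n$}.

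Two ingredients will suffice. First, the sup-norm envelope: by the defining Fourier representation and the compact support of $\varphi_K$,
\[
\|K_n\|_{\infty} \leq \frac{1}{2\pi}\int_{-1}^{1}\frac{|\varphi_K(t)|}{|\varphi_\varepsilon(t/h_n)|}\,dt \leq \frac{\|\varphi_K\|_{\infty}}{\pi}\,r_n,
\]
so $\|g_n\|_{\infty}\leq C r_n$ for some constant $C$ independent of $n$. Second, the quadratic variation of $g_n$ (equivalently, of $K_n$ up to a factor of $h_n$): this is precisely the content of the companion Lemma \ref{lem: quadratic variation} announced in Remark \ref{rem: comparison}, which gives the matching bound $V_2(g_n)\lesssim r_n^{2}$. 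Hence the ratio $V_2(g_n)/\|g_n\|_{\infty}^{2}$ is bounded by a constant independent of $n$, and the Gin\'e--Nickl bound yields
\[
\sup_{Q} N(\mathcal{K}_n,\|\cdot\|_{Q,2},\delta\|g_n\|_{\infty}) \leq (A/\delta)^{v}
\]
with $A,v$ absolute. Substituting $\|g_n\|_{\infty}\leq Cr_n$ and absorbing $C$ into $A$ (or rescaling $\delta$) gives exactly the stated inequality.

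The main technical point is not this lemma itself but the matching powers of $r_n$ in the envelope and the quadratic variation: if the two quantities were to scale at different rates in $h_n$, the constants $A,v$ would depend on $n$ and the conclusion would fail. The envelope bound above is immediate from Assumption \ref{as: kernel} and the definition of $r_n$; the quadratic variation estimate, deferred to Lemma \ref{lem: quadratic variation}, is where Assumption \ref{as: ill-posedness} enters in an essential way, through control on both $|\varphi_\varepsilon|^{-1}$ and $|\varphi_\varepsilon'|/|\varphi_\varepsilon|^{2}$ on $[-h_n^{-1},h_n^{-1}]$. Given that companion lemma, the VC-type bound above is a direct application.
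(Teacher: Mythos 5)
Your proof is correct and follows essentially the same route as the paper: both reduce Lemma \ref{lem: VC type} to the Gin\'e--Nickl entropy bound for translation classes of a function of bounded quadratic variation, with the required bound $V_2(K_n)\lesssim r_n^2$ supplied by Lemma \ref{lem: quadratic variation}. The only addition you make beyond the paper's one-line reduction is to spell out the envelope bound $\|K_n\|_\infty\lesssim r_n$, which is a harmless (and in fact implicit) detail; note as a small point that $V_2(g_n)=V_2(K_n)$ exactly, since the $p$-variation is invariant under monotone reparametrization, so the ``up to a factor of $h_n$'' qualifier is unnecessary.
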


In view of Lemma 1 in \cite{GiNi09} (or  Proposition 3.6.12 in \cite{GiNi16}), Lemma \ref{lem: VC type} follows as soon as we show that $K_{n}$ has \textit{quadratic variation} $\lesssim r_{n}^{2}$. 
Recall that a real-valued function $f$ on $\R$ is said to be of bounded $p$-variation for $p \in [1,\infty)$ if 
\[
V_{p}(f) := \sup \Bigg \{ \sum_{\ell=1}^{N} |f(x_{\ell}) - f(x_{\ell-1})|^{p} : -\infty < x_{0} < \cdots < x_{N} < \infty, N =1,2,\dots \Bigg \}
\]
is finite. A function of bounded $2$-variation is said to be of bounded quadratic variation.
Now, Lemma \ref{lem: VC type} follows from the next lemma.

\begin{lemma}
\label{lem: quadratic variation}
Assume the same conditions as in Lemma \ref{lem: VC type}. 
Then the deconvolution kernel $K_{n}$ is of bounded quadratic variation with $V_{2}(K_{n}) \lesssim r_{n}^{2}$. 
\end{lemma}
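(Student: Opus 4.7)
The idea is to exploit the Paley--Wiener structure of $K_n$. Setting $g_n(t) := \varphi_K(t)/\varphi_\varepsilon(t/h_n)$, which is supported in $[-1,1]$ and satisfies $\|g_n\|_\infty \leq r_n \|\varphi_K\|_\infty$, the function $K_n$ extends to an entire function of exponential type at most one, lies in $L^2(\R)$ with $\|K_n\|_{L^2} \lesssim r_n$ (by Plancherel), and satisfies $\|K_n\|_\infty \lesssim r_n$. Bernstein's inequality then gives $\|K_n'\|_{L^2} \leq \|K_n\|_{L^2} \lesssim r_n$; equivalently, by Plancherel, $\|K_n'\|_{L^2}^2 = (2\pi)^{-1}\int_{-1}^1 |t g_n(t)|^2\,dt \leq \|K_n\|_{L^2}^2$.

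\textbf{Short/long dichotomy.} Fix an arbitrary finite increasing partition $x_0 < \cdots < x_N$ with $\Delta_\ell := x_\ell - x_{\ell-1}$, and split the indices into the short class $S = \{\ell : \Delta_\ell \leq 1\}$ and the long class $L = \{\ell : \Delta_\ell > 1\}$. For the short indices, writing $K_n(x_\ell) - K_n(x_{\ell-1}) = \int_{x_{\ell-1}}^{x_\ell} K_n'(s)\,ds$ and applying Cauchy--Schwarz, together with $\Delta_\ell \leq 1$ and disjointness of the intervals, yields
\[
\sum_{\ell \in S} |K_n(x_\ell) - K_n(x_{\ell-1})|^2 \;\leq\; \sum_{\ell \in S} \Delta_\ell \int_{x_{\ell-1}}^{x_\ell} (K_n'(s))^2\,ds \;\leq\; \|K_n'\|_{L^2}^2 \;\lesssim\; r_n^2 .
\]
For the long indices I would use $|K_n(x_\ell) - K_n(x_{\ell-1})|^2 \leq 2|K_n(x_\ell)|^2 + 2|K_n(x_{\ell-1})|^2$. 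The crucial point is that each of the sets $\{x_{\ell-1} : \ell \in L\}$ and $\{x_\ell : \ell \in L\}$ is $1$-separated in $\R$, because consecutive long intervals are disjoint and each has length greater than one. The Plancherel--Polya inequality applied to $K_n$, an entire function of exponential type $\leq 1$ in $L^2(\R)$, evaluated on any $1$-separated sequence gives $\sum_k |K_n(z_k)|^2 \lesssim \|K_n\|_{L^2}^2 \lesssim r_n^2$, so the long contribution is also $\lesssim r_n^2$.

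\textbf{Conclusion and main obstacle.} Summing the two estimates and taking the supremum over partitions delivers $V_2(K_n) \lesssim r_n^2$. The central difficulty is that the total variation $V_1(K_n)$ is in general infinite for a Paley--Wiener function on $\R$ (the $\mathrm{sinc}$ kernel already exhibits this), which rules out the naive route $V_2 \leq 2\|K_n\|_\infty V_1(K_n)$, and also means one cannot simply invoke a bound via $\int |K_n'|$. The short/long dichotomy sidesteps this: the short part is controlled by an $L^2$ Cauchy--Schwarz estimate combined with Bernstein, while the long part is reduced to a discrete Plancherel--Polya inequality for bandlimited $L^2$ functions on separated sequences. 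The latter is where the specific Paley--Wiener/bandlimited structure of $K_n$, rather than any extra smoothness of $\varphi_K$ or $\varphi_\varepsilon$, is genuinely needed.
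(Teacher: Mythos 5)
Your argument is correct, and it takes a genuinely different route from the paper. The paper's proof computes the homogeneous Besov norm $\| K_{n} \|_{\dot{B}_{2,1}^{1/2}}$ directly via Plancherel -- splitting the $u$-integral in the Besov seminorm into $|u|\le 1$ and $|u|>1$ -- and then invokes the embedding $\dot{B}_{2,1}^{1/2}(\R)\hookrightarrow BV_{2}(\R)$ from Bourdaud, Lanza de Cristoforis, and Sickel (2006, Theorem 5), following the strategy of Lounici and Nickl (2011). You instead work straight from the definition of $V_{2}$, splitting partition increments into short ($\Delta_\ell\le 1$) and long ($\Delta_\ell>1$): the short part is handled by Cauchy--Schwarz against $\|K_n'\|_{L^2}$ together with Bernstein (or, as you note, directly from $|t|\le 1$ on the support), and the long part by the Plancherel--P\'olya inequality for $1$-separated real sequences, which applies since $K_n$ is Paley--Wiener of type $\le 1$ and in $L^2$. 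Both proofs ultimately rest only on $\|K_n\|_{L^2}\lesssim r_n$, coming from the uniform bound $r_n$ on the compactly supported symbol $g_n=\varphi_K/\varphi_\varepsilon(\cdot/h_n)$, and both have the same small-scale/large-scale dichotomy, just implemented at the level of Besov increments versus partition increments. The paper's version is a two-line Plancherel computation once the Besov-to-$BV_2$ embedding is granted as a black box; yours avoids that embedding entirely at the cost of importing Plancherel--P\'olya, and makes the bandlimited structure the explicitly load-bearing feature. One small point worth stating in full if you write this up: for $\ell_1<\ell_2$ both long, $x_{\ell_2}-x_{\ell_1}\ge \Delta_{\ell_2}>1$ and $x_{\ell_2-1}-x_{\ell_1-1}\ge\Delta_{\ell_1}>1$ regardless of how many short intervals lie in between, which is what actually gives the $1$-separation.
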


\begin{proof}
The basic idea of the proof is due to the proof of Lemma 1 in \cite{LoNi11}. In view of the continuous embedding of the homogeneous Besov space $\dot{B}_{2,1}^{1/2}(\R)$ into $BV_{2}(\R)$, the space of functions of bounded quadratic variation \citep[][Theorem 5]{BoLaSi06}, it is enough to show that $\| K_{n} \|_{\dot{B}_{2,1}^{1/2}} \lesssim r_{n}$, where 
\[
\| f \|_{\dot{B}_{2,1}^{1/2}} = \int_{\R} \frac{1}{|u|^{3/2}}  \left ( \int_{\R} | f(x+u) - f(x)|^{2} dx \right )^{1/2} du. 
\] 
Precisely speaking, for any real-valued function $f$ on $\R$ that vanishes at infinity (i.e., $\lim_{|x| \to \infty} f(x) = 0$), the following bound holds: $V_{2}(f)^{1/2} \lesssim \| f \|_{B_{2,1}^{1/2}}$ up to a constant independent of $f$. Observe that $K_{n}$ vanishes at infinity by the Riemann-Lebesgue lemma.
Let $\psi_{n} (t) = \varphi_{K}(t)/\varphi_{\varepsilon}(t/h_{n})$, and observe that, using Plancherel's theorem, 
\[
\int_{\R} | K_{n} (x+u) - K_{n}(x) |^{2} dx = \frac{1}{2\pi} \int_{\R} | e^{-itu} - 1 |^{2} | \psi_{n}(t) |^{2}dt = \frac{1}{\pi} \int_{\R} (1-\cos (tu)) | \psi_{n}(t) |^{2} dt.
\]
Using the inequality $1-\cos (tu) \leq \min \{ 2, (tu)^{2}/2 \}$, 
we conclude that 
\begin{align*}
\| K_{n} \|_{\dot{B}_{2,1}^{1/2}} &\lesssim \left ( \int_{\R} t^{2} | \psi_{n}(t) |^{2} dt \right )^{1/2} \int_{[-1,1]} \frac{1}{|u|^{1/2}} du  +  \left ( \int_{\R}| \psi_{n}(t) |^{2} dt \right )^{1/2} \int_{[-1,1]^{c}} \frac{1}{|u|^{3/2}} du \\
&\lesssim r_{n}.
\end{align*}
This completes the proof. 
\end{proof}

\begin{remark}
Lemma \ref{lem: VC type} generalizes a part of Lemma 5.3.5 in \cite{GiNi16} that focuses on the case where the bandwidth $h_{n}$ takes values in $\{ 2^{-k} : k=1,2,\dots \}$. 
The proof of Lemma \ref{lem: VC type} appears to be simpler than that of Lemma 5.3.5 in \cite{GiNi16} (but note that Lemma 5.3.5 in \cite{GiNi16} also covers wavelet kernels). 
\end{remark}

\begin{lemma}
\label{lem: variance}
Assumptions \ref{as: standard}--\ref{as: variance} imply that, for sufficiently large $n$,  $\inf_{x \in I} \sigma_{n}^{2} (x) \gtrsim h_{n}^{-2\alpha+1}$.
\end{lemma}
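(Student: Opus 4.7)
\textbf{Proof proposal for Lemma~\ref{lem: variance}.} The plan is to mimic the standard kernel-density identity
\[
\sigma_n^2(x) \;\sim\; h_n f_Y(x)\int_\R K_n^2(u)\,du
\]
uniformly in $x\in I$, and then combine it with $\int K_n^2 \asymp h_n^{-2\alpha}$. Throughout, the decomposition $\sigma_n^2(x) = \E[K_n^2((x-Y)/h_n)] - (\E[K_n((x-Y)/h_n)])^2$ is the starting point. The squared-mean term is negligible: by Fourier inversion and $\varphi_Y=\varphi_X\varphi_\varepsilon$,
\[
\E[K_n((x-Y)/h_n)] \;=\; h_n\,(K_{h_n}\ast f_X)(x),\qquad K_{h_n}(y):=h_n^{-1}K(y/h_n),
\]
and since $\|f_X\|_\infty \le (2\pi)^{-1}\|\varphi_X\|_{L^1}<\infty$ by Assumption~\ref{as: standard} together with $K\in L^1$ from Assumption~\ref{as: kernel}, this quantity is $O(h_n)$ uniformly, whence its square is $O(h_n^2)=o(h_n^{-2\alpha+1})$.

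Next I handle $\int K_n^2$ via Plancherel:
\[
\int_\R K_n^2(u)\,du \;=\; \frac{h_n}{2\pi}\int_{|s|\le 1/h_n} \frac{|\varphi_K(sh_n)|^2}{|\varphi_\varepsilon(s)|^2}\,ds.
\]
Continuity of $\varphi_K$ with $\varphi_K(0)=1$ gives $|\varphi_K(sh_n)|\ge 1/2$ for $|s|\le c_0/h_n$ with some $c_0\in(0,1)$, while Assumption~\ref{as: ill-posedness} provides $|\varphi_\varepsilon(s)|^{-2}\ge C_1^{-2}|s|^{2\alpha}$ on $|s|\ge 1$. Restricting to $1\le|s|\le c_0/h_n$ and integrating yields $\int K_n^2 \gtrsim h_n^{-2\alpha}$.

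The main obstacle is that $K_n$ is not compactly supported, so the naive substitution $f_Y(x-h_nu)\approx f_Y(x)$ inside the integral is unavailable. I resolve this via a truncation argument backed by a second-moment estimate for $K_n$. Plancherel applied to $uK_n(u)$, whose Fourier transform is $-i\varphi_{K_n}'(t)$ with
\[
\varphi_{K_n}'(t) \;=\; \frac{\varphi_K'(t)}{\varphi_\varepsilon(t/h_n)} \;-\; \frac{\varphi_K(t)\,\varphi_\varepsilon'(t/h_n)}{h_n\,\varphi_\varepsilon(t/h_n)^2},
\]
combined with both bounds on $|\varphi_\varepsilon|^{-1}$ and $|\varphi_\varepsilon'|$ from Assumption~\ref{as: ill-posedness}, gives after a direct estimate
\[
\int_\R u^2 K_n^2(u)\,du \;\lesssim\; h_n^{-2\alpha} + h_n^{-1}\log h_n^{-1}.
\]
Markov's inequality then shows, for any fixed $M>0$,
\[
\int_{|u|>M/h_n} K_n^2(u)\,du \;\le\; (h_n/M)^2\bigl(h_n^{-2\alpha}+h_n^{-1}\log h_n^{-1}\bigr) \;=\; o(h_n^{-2\alpha}),
\]
using $\alpha>0$. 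Consequently $\int_{|u|\le M/h_n} K_n^2 \gtrsim h_n^{-2\alpha}$ for $n$ large.

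To close the argument, I observe that $|\varphi_Y|\le|\varphi_X|\in L^1$ by Assumption~\ref{as: standard}, so $f_Y$ is the Fourier inverse of an $L^1$ function and hence uniformly continuous on $\R$. Together with Assumption~\ref{as: variance} (positivity of $f_Y$ on compact $I$), there exist constants $c,M>0$ such that $f_Y(y)\ge c/2$ whenever $\mathrm{dist}(y,I)\le M$. For $x\in I$ and $|u|\le M/h_n$ we have $|x-h_nu-x|\le M$, so $f_Y(x-h_nu)\ge c/2$. Therefore, uniformly in $x\in I$ and for $n$ sufficiently large,
\[
\E[K_n^2((x-Y)/h_n)] \;=\; h_n\int_\R K_n^2(u) f_Y(x-h_nu)\,du \;\ge\; \frac{c\,h_n}{2}\int_{|u|\le M/h_n} K_n^2(u)\,du \;\gtrsim\; h_n^{-2\alpha+1},
\]
which, together with the $O(h_n^2)$ bound on the squared mean, yields $\inf_{x\in I}\sigma_n^2(x) \gtrsim h_n^{-2\alpha+1}$.
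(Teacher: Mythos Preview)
Your proof is correct and follows the same overall strategy as the paper: bound the squared mean by $O(h_n^2)$, show $\int K_n^2 \gtrsim h_n^{-2\alpha}$ via Plancherel, and then argue that the tail of $K_n^2$ is negligible so that $\int K_n^2(u)f_Y(x-h_nu)\,du$ inherits the lower bound uniformly in $x\in I$.

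The execution of the tail step differs. The paper uses integration by parts in the inverse Fourier representation of $K_n$ to obtain the \emph{pointwise} decay $|K_n(x)|\lesssim h_n^{-\alpha}/|x|$, i.e., $h_n^{2\alpha}K_n^2(x)\lesssim\min\{1,1/x^2\}$, and then runs an $\varepsilon$--$\delta$ argument based on continuity of $f_Y$ to show $h_n^{2\alpha}\int K_n^2(y)\{f_Y(x-h_ny)-f_Y(x)\}\,dy\to 0$ uniformly in $x\in I$. You instead apply Plancherel to $uK_n(u)$ (Fourier transform $-i\varphi_{K_n}'$) to get the \emph{second-moment} bound $\int u^2K_n^2(u)\,du\lesssim h_n^{-2\alpha}+h_n^{-1}\log h_n^{-1}$, use Chebyshev to truncate, and then lower bound $f_Y$ directly on an $M$-neighborhood of $I$. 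Both routes compute $\varphi_{K_n}'$ and use the same hypotheses on $\varphi_\varepsilon$ and $\varphi_\varepsilon'$; the paper's pointwise bound is slightly more informative (and reusable elsewhere), whereas your $L^2$/Chebyshev step is a touch shorter and avoids the $\varepsilon$--$\delta$ argument. Either way the conclusion follows.
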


\begin{proof}
The proof is inspired by \cite{Fa91b}. The difficulty here is that $K_{n}$ has unbounded support (since its Fourier transform $\frac{\varphi_{K}}{\varphi_{\varepsilon}(\cdot/h_{n})}$ is compactly supported) and depends intrinsically on $n$. Observe first that $\| K_{n} \|_{\R} \lesssim h_{n}^{-\alpha}$. Second, integration by parts yields that 
\[
K_{n}(x) = \frac{1}{2\pi ix} \int_{\R} e^{-itx} \left \{ \frac{\varphi_{K}(t)}{\varphi_{\varepsilon}(t/h_{n})} \right \}' dt
= \frac{1}{2\pi ix} \int_{\R} e^{-itx} \left \{ \frac{\varphi_{K}'(t)}{\varphi_{\varepsilon}(t/h_{n})} -\frac{\varphi_{K}(t) \varphi_{\varepsilon}'(t/h_{n})}{h_{n}\varphi^{2}_{\varepsilon}(t/h_{n})}\right \} dt. 
\]
It is not difficult to verify that 
\[
\int_{\R} \left |  \frac{\varphi_{K}'(t)}{\varphi_{\varepsilon}(t/h_{n})} \right | dt \lesssim h_{n}^{-\alpha}.
\]
Splitting the integral into $|t/h_{n}| \leq 1$ and $|t/h_{n}| > 1$, we also see that 
\begin{align*}
\left \{ \int_{|t/h_{n}| \leq 1} + \int_{|t/h_{n}| > 1} \right \} \left | \frac{\varphi_{K}(t)\varphi_{\varepsilon}'(t/h_{n})}{h_{n}\varphi^{2}_{\varepsilon}(t/h_{n})} \right | dt 
\lesssim 1 + h_{n}^{-\alpha} \int_{\R} |t|^{\alpha-1} |\varphi_{K}(t)| dt,
\end{align*}
which is $\lesssim h_{n}^{-\alpha}$. 
This yields that $h_{n}^{2\alpha} K_{n}^{2}(x) \lesssim 1/x^{2}$, and so $h_{n}^{2\alpha} K_{n}^{2}(x) \lesssim \min \{ 1,1/x^{2} \}$.

Now, observe that
\begin{align*}
| \Ep[ K_{n} ((x-Y)/h_{n}) ] | &= h_{n} \left | \int_{\R} K(y) f_{X}(x-h_{n}y) dy \right |  \\
&\leq h_{n} \| f_{X} \|_{\R} \int_{\R} |K(y)| dy =O(h_{n}), \ \text{and} \\
\Ep[ K_{n}^{2}((x-Y)/h_{n}) ] &= h_{n} \int_{\R} K_{n}^{2}(y) f_{Y}(x-h_{n}y) dy.
\end{align*}
So, it is enough to prove that
\[
\inf_{x \in I} \int_{\R} K_{n}^{2}(y) f_{Y}(x-h_{n}y) dy \gtrsim (1-o(1)) h_{n}^{-2\alpha}.
\]
To this end, since 
\[
\int_{\R} K_{n}^{2}(y) dy = \frac{1}{2 \pi} \int_{\R} \frac{| \varphi_{K}(t) |^{2}}{| \varphi_{\varepsilon}(t/h_{n}) |^{2}} dt \gtrsim h_{n}^{-2\alpha},
\]
by Plancherel's theorem (we have used that $|\varphi_{\varepsilon}(t)| \leq C_{1}|t|^{-\alpha}$ for $|t| \geq 1$ to deduce the last inequality), it is enough  to prove that as $n \to \infty$,
\[
\sup_{x \in I} \left | h_{n}^{2\alpha} \int_{\R} K_{n}^{2}(y) \{ f_{Y}(x-h_{n}y) - f_{Y}(x) \} dy \right | \to 0.
\]
Since $f_{Y}$ is continuous and $I$ is compact, for any $\rho > 0$, there exists $\delta > 0$ such that $\sup_{x \in I} |f_{Y}(x-y) - f_{Y}(x)| \leq \rho$ whenever $|y| \leq \delta$. So,
\begin{align*}
&\sup_{x \in I} h_{n}^{2\alpha} \int_{\R} K_{n}^{2}(y) | f_{Y}(x-h_{n}y) - f_{Y}(x) | dy \\
&\quad \lesssim \rho \int_{|y| \leq \delta/h_{n}} \min \{ 1, 1/y^{2} \} dy +2\| f_{Y} \|_{\R} \int_{|y| > \delta/h_{n}} y^{-2} dy \lesssim \rho + o(1),
\end{align*}
which yields the desired conclusion. 
\end{proof}

\begin{proof}[Proof of Theorem \ref{thm: Gaussian approximation}]
We divide the proof into three steps. 

\textbf{Step 1.} (Gaussian approximation to $Z_{n}^{*}$). Recall the empirical process $Z_{n}^{*}(x), x \in I$ defined as
\begin{align*}
Z_{n}^{*}(x) &= \frac{\sqrt{n} h_{n} \{ \hat{f}_{X}^{*}(x) - \Ep [\hat{f}_{X}^{*}(x)] \}}{\sigma_{n}(x)} \\
&=\frac{1}{\sigma_{n}(x)\sqrt{n}} \sum_{j=1}^{n} \{ K_{n}((x-Y_{j})/h_{n}) - \Ep[K_{n}((x-Y)/h_{n})] \}, \ x \in I. 
\end{align*}
Consider the class of functions 
\[
\mG_{n} = \left \{ \frac{1}{\sigma_{n}(x)} \{ K_{n}((x-\cdot)/h_{n}) - \E[K_{n}((x-Y)/h_{n})] \}   : x \in I \right \},
\]
together with the empirical process indexed by $\mathcal{G}_{n}$ defined as
\[
\mG_{n} \ni g \mapsto \nu_{n}(g) = \frac{1}{\sqrt{n}} \sum_{j=1}^{n} g(Y_{j}).
\]
Observe that $\| Z_{n}^{*} \|_{I} =\| \nu_{n} \|_{\mG_{n}}$. We apply Corollary 2.2 in \cite{ChChKa14a} to $\| \nu_{n} \|_{\mG_{n}}$. First, since the set $\{ 1/\sigma_{n}(x) : x \in I \}$ is bounded with $\| 1/\sigma_{n} \|_{I} \lesssim h_{n}^{\alpha-1/2}$, 
in view of Lemma \ref{lem: VC type} of the present paper and Corollary A.1 in \cite{ChChKa14a}, there exist constants $A',v' > 0$ independent of $n$ such that 
\begin{equation}
\sup_{Q} N(\mG_{n}, \| \cdot \|_{Q,2}, \delta/\sqrt{h_{n}}) \leq (A'/\delta)^{v'}, \ 0 < \forall \delta \leq 1, \label{eq: covering number bound}
\end{equation}
which ensures the existence a tight Gaussian random variable $G_{n}$ in $\ell^{\infty}(\mG_{n})$ with mean zero and the same covariance function as $\nu_{n}$ \citep[cf.][Lemma 2.1]{ChChKa14a}. 
Since $\| K_{n}((x-\cdot)/h_{n})/\sigma_{n}(x) \|_{\R} \lesssim 1/\sqrt{h_{n}}$ and $\Var (K_{n}((x-Y)/h_{n})/\sigma_{n}(x)) = 1$, application of Corollary 2.2 in \cite{ChChKa14a} with $q=\infty, b \lesssim 1/\sqrt{h_{n}}, \sigma=1, A \lesssim 1, v \lesssim 1$, and $\gamma = 1/\log n$, yields that there exists a sequence  of random variables $W_{n}$ with $W_{n} \stackrel{d}{=} \| G_{n} \|_{\mG_{n}}$ (where the notation $\stackrel{d}{=}$ signifies equality in distribution) and such that 
\begin{equation}
\left | \| \nu_{n} \|_{\mG_{n}} - W_{n} \right | = O_{\Pr}\{  (\log n) /(nh_{n})^{1/6} \}, \label{eq: coupling}
\end{equation}
where the left hand side is equal to $| \| Z_{n}^{*} \|_{I} - W_{n} |$. 

Next, for $f_{n,x}(\cdot) = \{ K_{n}((x-\cdot)/h_{n})-\E[ K_{n}((x-Y)/h_{n})] \}/\sigma_{n}(x)$,  define
\[
Z_{n}^{G} (x) = G_{n} \left (f_{n,x}  \right ), \ x \in I,
\]
and observe that $Z_{n}^{G}$ is a tight Gaussian random variable in $\ell^{\infty}(I)$ with mean zero and the same covariance function as $Z_{n}^{*}$, and such that $\| Z_{n}^{G} \|_{I} = \| G_{n} \|_{\mG_{n}} \stackrel{d}{=} W_{n}$.
It is worth noting  that deducing from (\ref{eq: coupling}) a bound on 
\[
\sup_{z \in \R}\left | \Pr \{ \| Z^{*}_{n} \|_{I} \leq z \} - \Pr \{ \| Z_{n}^{G} \|_{I} \leq z \} \right |
\]
is  a non-trivial step, since the distribution of the approximating Gaussian  process $Z_{n}^{G}$ changes with $n$.  
To this end, we will use the anti-concentration inequality for the supremum of a Gaussian process, which yields that 
\begin{equation}
\sup_{z \in \R; \Delta > 0} \frac{\Pr \{ z - \Delta \leq \| Z_{n}^{G} \|_{I} \leq z + \Delta \}}{\Delta}  \lesssim \Ep [ \| Z_{n}^{G} \|_{I} ].   \label{eq: AC bound}
\end{equation}
See Corollary 2.1 in \cite{ChChKa14b} (see also Theorem 3 in \cite{ChChKa15}). 
To apply this inequality, we shall bound $\Ep [ \| Z_{n}^{G} \|_{I} ] = \Ep [ \| G_{n} \|_{\mG_{n}} ]$, but given the covering number bound (\ref{eq: covering number bound}) and $\Var (K_{n}((x-Y)/h_{n})/\sigma_{n}(x)) = 1$, Dudley's entropy integral bound \citep[cf.][Corollary 2.2.8]{vaWe96} yields that 
\[
\E[ \| G_{n} \|_{\mG_{n}} ] \lesssim \int_{0}^{1} \sqrt{1+\log (1/(\delta\sqrt{h}_{n}))} d\delta \lesssim \sqrt{\log h_{n}^{-1}}. 
\]

Now, combining (\ref{eq: coupling}) with the anti-concentration inequality (\ref{eq: AC bound}), we conclude that 
\[
\sup_{z \in \R} \left | \Pr \{ \| Z^{*}_{n} \|_{I} \leq z \} - \Pr \{ \| Z_{n}^{G} \|_{I} \leq z \} \right | \to 0,
\]
provided that $\frac{ (\log n)\sqrt{\log h_{n}^{-1}}}{(nh_{n})^{1/6}} \to 0$,
which is satisfied under our assumption.

\textbf{Step 2}. (Gaussian approximation to the intermediate process). Define the intermediate process 
\[
\tilde{Z}_{n}(x) =  \frac{\sqrt{n} h_{n} \{ \hat{f}_{X}(x) - \Ep[ \hat{f}_{X}^{*}(x) ] \}}{\sigma_{n}(x)}, \ x \in I, 
\]
where the difference from $\hat{Z}_{n}(x)$ is that $\hat{\sigma}_{n}(x)$ is replaced by $\sigma_{n}(x)$. In this step, we wish to prove that 
\begin{equation}
\sup_{z \in \R} | \Pr \{ \|\tilde{Z}_{n} \|_{I} \leq z \} - \Pr \{ \| Z_{n}^{G} \|_{I} \leq z \} | \to 0,  \label{eq: intermediate}
\end{equation}
where $Z_{n}^{G}$ is given in the previous step.

Since $\varphi_{\varepsilon}$ does not vanish on $\R$, we have that $\{ \varphi_{Y} \neq 0 \} = \{ \varphi_{X} \neq 0 \}$, so that
we have
%, setting $T = \{ t \in \R : \varphi_{X}(t) \neq 0 \}$,
\begin{align*}
\hat{f}_{X}^{*} (x) 
&= \frac{1}{2\pi} \left \{ \int_{\{ \varphi_{X} \neq 0 \}} e^{-itx} \varphi_{K}(th_{n}) \frac{\hat{\varphi}_{Y}(t)}{\varphi_{Y}(t)} \varphi_{X}(t) dt 
+ \int_{\{ \varphi_{X} = 0 \}} e^{-itx}  \frac{\varphi_{K}(th_{n})}{\varphi_{\varepsilon}(t)} \hat{\varphi}_{Y}(t) dt \right \}, \ \text{and} \\
\hat{f}_{X} (x) &=  
 \frac{1}{2\pi}  \Bigg \{ \int_{\{ \varphi_{X} \neq 0 \}} e^{-itx} \varphi_{K}(th_{n}) \frac{\hat{\varphi}_{Y}(t)}{\varphi_{Y}(t)} \frac{\varphi_{\varepsilon}(t)}{\hat{\varphi}_{\varepsilon}(t)}\varphi_{X}(t) dt + \int_{\{ \varphi_{X}= 0 \}} e^{-itx} \frac{\varphi_{K}(th_{n})}{\varphi_{\varepsilon}(t)} \hat{\varphi}_{Y}(t) \frac{\varphi_{\varepsilon}(t)}{\hat{\varphi}_{\varepsilon}(t)} dt \Bigg \}.
\end{align*}
So, letting
\[
\tilde{f}_{X}(x) = \frac{1}{2\pi} \int_{\R} e^{-itx} \varphi_{K}(th_{n}) \frac{\varphi_{\varepsilon}(t)}{\hat{\varphi}_{\varepsilon}(t)}\varphi_{X}(t) dt,
\]
we obtain the following decomposition:
\begin{align*}
&\hat{f}_{X}(x) - \hat{f}^{*}_{X} (x) \\
&= \left [ \{ \hat{f}_{X}(x) - \tilde{f}_{X}(x) \}  - \{ \hat{f}_{X}^{*}(x) - \Ep[\hat{f}_{X}^{*}(x)] \} \right ] + \{ \tilde{f}_{X}(x)  - \Ep[\hat{f}_{X}^{*}(x)] \} \\
&= \frac{1}{2\pi} \int_{\{ \varphi_{X} \neq 0 \}} e^{-itx} \varphi_{K}(th_{n}) \left \{ \frac{\hat{\varphi}_{Y}(t)}{\varphi_{Y}(t)} - 1 \right \} \left \{ \frac{\varphi_{\varepsilon}(t)}{\hat{\varphi}_{\varepsilon}(t)} - 1 \right \} \varphi_{X}(t) dt \\
&\quad + \frac{1}{2\pi} \int_{\{ \varphi_{X} = 0 \}} e^{-itx} \frac{\varphi_{K}(th_{n})}{\varphi_{\varepsilon}(t)} \hat{\varphi}_{Y}(t) \left \{ \frac{\varphi_{\varepsilon}(t)}{\hat{\varphi}_{\varepsilon}(t)} - 1 \right \} dt\\
&\quad + \frac{1}{2\pi}  \int_{\R} e^{-itx} \varphi_{K}(th_{n}) \left \{ \frac{\varphi_{\varepsilon}(t)}{\hat{\varphi}_{\varepsilon}(t)}-1 \right \} \varphi_{X}(t)  dt.
\end{align*}
Hence the Cauchy-Schwarz inequality yields that 
\begin{align*}
| \hat{f}_{X}(x) - \hat{f}_{X}^{*}(x) |^{2} 
&\lesssim \left \{ \int_{\{ \varphi_{X} \neq 0 \} \cap [-h_{n}^{-1},h_{n}^{-1}]} \left | \frac{\hat{\varphi}_{Y}(t)}{\varphi_{Y}(t)} - 1 \right |^{2} | \varphi_{X}(t) |^{2} dt \right \} \left \{ \int_{-h_{n}^{-1}}^{h_{n}^{-1}} \left | \frac{\varphi_{\varepsilon}(t)}{\hat{\varphi}_{\varepsilon}(t)} - 1 \right |^{2} dt \right \} \\
&\quad + h_{n}^{-2\alpha} \left \{ \int_{\{ \varphi_{X}=0 \} \cap [-h_{n}^{-1},h_{n}^{-1}]} | \hat{\varphi}_{Y}(t) |^{2} dt \right \} \left \{ \int_{-h_{n}^{-1}}^{h_{n}^{-1}} \left | \frac{\varphi_{\varepsilon}(t)}{\hat{\varphi}_{\varepsilon}(t)} - 1 \right |^{2} dt \right \} \\
&\quad + \int_{-h_{n}^{-1}}^{h_{n}^{-1}} \left | \frac{\varphi_{\varepsilon}(t)}{\hat{\varphi}_{\varepsilon}(t)}-1 \right |^{2} | \varphi_{X}(t) |  dt.
\end{align*}
We will bound the following four terms: 
\begin{align*}
&\int_{\{ \varphi_{X} \neq 0 \} \cap [-h_{n}^{-1},h_{n}^{-1}]} \left | \frac{\hat{\varphi}_{Y}(t)}{\varphi_{Y}(t)} - 1 \right |^{2} | \varphi_{X}(t) |^{2} dt, \quad \int_{\{ \varphi_{X} = 0 \} \cap [-h_{n}^{-1},h_{n}^{-1}]} | \hat{\varphi}_{Y}(t) |^{2} dt, \\
&\int_{-h_{n}^{-1}}^{h_{n}^{-1}} \left | \frac{\varphi_{\varepsilon}(t)}{\hat{\varphi}_{\varepsilon}(t)} - 1 \right |^{2} dt , \quad
\int_{-h_{n}^{-1}}^{h_{n}^{-1}} \left | \frac{\varphi_{\varepsilon}(t)}{\hat{\varphi}_{\varepsilon}(t)}-1 \right |^{2} | \varphi_{X}(t) |  dt.
\end{align*}

To bound the first term, pick and fix any $t \in \R$ such that $\varphi_{Y}(t) \neq 0$; let $\zeta_{j} = e^{it Y_{j}}/\varphi_{Y}(t)$ for $j=1,\dots,n$.
Then we have that
\[
\Ep \left [ \left | \frac{\hat{\varphi}_{Y}(t)}{\varphi_{Y}(t)} - 1 \right |^{2} \right ] = \Ep \left [ \left | \frac{1}{n} \sum_{j=1}^{n} \{ \zeta_{j} - \Ep [ \zeta_{j} ] \} \right |^{2} \right ] \leq \frac{1}{n | \varphi_{Y}(t) |^{2}}. 
\]
Therefore, 
\[
\Ep \left [ \int_{\{ \varphi_{X} \neq 0 \} \cap [-h_{n}^{-1},h_{n}^{-1}]} \left | \frac{\hat{\varphi}_{Y}(t)}{\varphi_{Y}(t)} - 1 \right |^{2} | \varphi_{X}(t) |^{2} dt \right ] \leq n^{-1} \int_{-h_{n}^{-1}}^{h_{n}^{-1}} \frac{1}{|\varphi_{\varepsilon}(t)|^{2}} dt \lesssim h_{n}^{-2\alpha}(nh_{n})^{-1}.
\]
On the other hand, since $\varphi_{Y}(t)=0$ whenever $\varphi_{X}(t)=0$, we have that
\[
\Ep \left [\int_{\{ \varphi_{X} = 0 \} \cap [-h_{n}^{-1},h_{n}^{-1}]} | \hat{\varphi}_{Y}(t) |^{2} dt \right ] \leq 2(nh_{n})^{-1}. 
\]
To bound the third and fourth terms, we first note that, from Lemma \ref{lem: rate ecf} ahead together with the fact that $\Ep[ | \varepsilon |^{p}] < \infty$ for some $p > 0$, 
\begin{equation}
\| \hat{\varphi}_{\varepsilon} - \varphi_{\varepsilon} \|_{[-h_{n}^{-1},h_{n}^{-1}]} = O_{\Pr}\{ m^{-1/2} \log h_{n}^{-1} \}, \label{eq: rate ecf}
\end{equation}
which is $o_{\Pr}(h_{n}^{\alpha})$ by Assumption \ref{as: rate} (b). Hence
\[
\inf_{|t| \leq h_{n}^{-1}} | \hat{\varphi}_{\varepsilon}(t) | \geq \inf_{|t| \leq h_{n}^{-1}} | \varphi_{\varepsilon}(t) | - o_{\Pr}(h_{n}^{\alpha}) \gtrsim (1-o_{\Pr}(1)) h_{n}^{\alpha},
\]
from which we have
\begin{align*}
\int_{-h_{n}^{-1}}^{h_{n}^{-1}} \left | \frac{\varphi_{\varepsilon}(t)}{\hat{\varphi}_{\varepsilon}(t)} - 1 \right |^{2} dt  &\leq O_{\Pr}(h_{n}^{-2\alpha}) \int_{-h_{n}^{-1}}^{h_{n}^{-1}} | \varphi_{\varepsilon}(t) - \hat{\varphi}_{\varepsilon}(t) |^{2} dt \\
&=  O_{\Pr} \left \{ h_{n}^{-2\alpha} \int_{-h_{n}^{-1}}^{h_{n}^{-1}} \Ep[ | \varphi_{\varepsilon}(t) - \hat{\varphi}_{\varepsilon}(t) |^{2} ]dt \right \} \\
&=  O_{\Pr}(h_{n}^{-2\alpha-1}m^{-1}), \ \text{and} \\
\int_{-h_{n}^{-1}}^{h_{n}^{-1}} \left | \frac{\varphi_{\varepsilon}(t)}{\hat{\varphi}_{\varepsilon}(t)}-1 \right |^{2} | \varphi_{X}(t) |  dt &=  O_{\Pr} \left \{ h_{n}^{-2\alpha} \int_{-h_{n}^{-1}}^{h_{n}^{-1}} \Ep[ | \varphi_{\varepsilon}(t) - \hat{\varphi}_{\varepsilon}(t) |^{2} ] | \varphi_{X}(t) | dt \right \} \\
& =  O_{\Pr}(h_{n}^{-2\alpha}m^{-1}),
\end{align*}
where we have used the fact that $| \varphi_{X} |$ is integrable on $\R$. 

Taking these together, we have that
\[
\| \hat{f}_{X} - \hat{f}_{X}^{*} \|_{\R} = o_{\Pr}\left [h_{n}^{-\alpha}\{ (nh_{n})\log h_{n}^{-1} \}^{-1/2} \right ]
\]
by Assumption \ref{as: rate} (b), from which we conclude that
\[
\| \tilde{Z}_{n}- Z_{n}^{*} \|_{I} = o_{\Pr}\{ (\log h_{n}^{-1})^{-1/2} \}.
\]
This shows that there exists a sequence of constants $\Delta_{n} \downarrow 0$ such that
\[
\Pr \{ \| \tilde{Z}_{n} - Z_{n}^{*} \|_{I}  > \Delta_{n} (\log h_{n}^{-1})^{-1/2} \} \leq \Delta_{n}
\]
 (which follows from the fact that convergence in probability is metrized by the Ky Fan metric; see Theorem 9.2.2 in \cite{Du02}), and so 
\begin{align*}
\Pr \{ \| \tilde{Z}_{n} \|_{I} \leq z \} &\leq \Pr \{ \| Z_{n}^{*} \|_{I} \leq z + \Delta_{n} (\log h_{n}^{-1})^{-1/2} \} + \Delta_{n} \\
&\leq  \Pr \{ \| Z_{n}^{G} \|_{I} \leq z + \Delta_{n} (\log h_{n}^{-1})^{-1/2} \} + o(1) \\
&\leq \Pr \{ \| Z_{n}^{G} \|_{I} \leq z \} + o(1)
\end{align*}
uniformly in $z \in \R$, where the second inequality follows from the previous step, and the last inequality follows from the anti-concentration inequality (\ref{eq: AC bound}). Likewise, we have 
$\Pr \{ \| \tilde{Z}_{n} \|_{I} \leq z \} \geq \Pr \{ \| Z_{n}^{G} \|_{I} \leq z \} - o(1)$
uniformly in $z \in \R$, so that we obtain the conclusion of this step.

\textbf{Step 3}. (Proof of the theorem). 
Observe that 
\begin{align*}
\| \hat{K}_{n} - K_{n} \|_{\R} &\leq \frac{1}{2\pi} \int_{\R}  \left | \frac{1}{\hat{\varphi}_{\varepsilon}(t/h_{n})} - \frac{1}{\varphi_{\varepsilon}(t/h_{n})} \right | | \varphi_{K} (t) | dt \\
&\leq O_{\Pr}(h_{n}^{-2\alpha})\int_{\R}  \left | \hat{\varphi}_{\varepsilon}(t/h_{n}) - \varphi_{\varepsilon}(t/h_{n}) \right | | \varphi_{K} (t) |dt, 
\end{align*}
which is $O_{\Pr}(h_{n}^{-2\alpha}m^{-1/2})$ since $
\int_{\R}  \Ep \left [ \left | \hat{\varphi}_{\varepsilon}(t/h_{n}) - \varphi_{\varepsilon}(t/h_{n}) \right | \right ] | \varphi_{K} (t) |dt \lesssim m^{-1/2}$.
Together with the fact that $\| K_{n} \|_{\R} = O(h_{n}^{-\alpha})$, we have 
\[
\| \hat{K}_{n}^{2} - K_{n}^{2} \|_{\R} \leq \| \hat{K}_{n} - K_{n} \|_{\R} \| \hat{K}_{n}+K_{n} \|_{\R} = O_{\Pr}(h_{n}^{-3\alpha} m^{-1/2}),
\]
which yields that
\[
\hat{\sigma}^{2}_{n}(x) = \frac{1}{n} \sum_{j=1}^{n} K_{n}^{2}((x-Y_{j})/h_{n}) - \left ( \frac{1}{n} \sum_{j=1}^{n} K_{n}((x-Y_{j})/h_{n}) \right )^{2}  + O_{\Pr}(h_{n}^{-3\alpha}m^{-1/2})
\]
uniformly in $x \in I$. It is not difficult to verify that 
\begin{align*}
&\frac{1}{n} \sum_{j=1}^{n} K_{n}^{2}((x-Y_{j})/h_{n}) = \Ep [ K_{n}^{2}((x-Y)/h_{n}) ] + O_{\Pr}(h_{n}^{-2\alpha} n^{-1/2}), \ \text{and} \\
&\frac{1}{n} \sum_{j=1}^{n} K_{n}((x-Y_{j})/h_{n}) = \Ep [ K_{n}((x-Y)/h_{n}) ] +O_{\Pr}(h_{n}^{-\alpha} n^{-1/2} )
\end{align*}
uniformly in $x \in I$. Indeed, in view of Lemma \ref{lem: VC type} of the present paper and Corollary A.1 in \cite{ChChKa14a}, these estimates follow from application of Theorem 2.14.1 in \cite{vaWe96}. Therefore, we have $
\hat{\sigma}_{n}^{2}(x) = \sigma_{n}^{2}(x) + O_{\Pr}\{ h_{n}^{-2\alpha} (h_{n}^{-\alpha}m^{-1/2} + n^{-1/2}) \}$
uniformly in $x \in I$, and so $
\hat{\sigma}_{n}^{2}(x)/\sigma_{n}^{2}(x) = 1 + O_{\Pr} \{ h_{n}^{-1} ( h_{n}^{-\alpha}m^{-1/2} + n^{-1/2} ) \}$
uniformly in $x \in I$ by Lemma \ref{lem: variance}, where 
\[
h_{n}^{-1} (h_{n}^{-\alpha}m^{-1/2} + n^{-1/2}) = o\{ (\log h_{n}^{-1})^{-1} \}
\]
 by Assumption \ref{as: rate}. This yields that $\| \sigma_{n}/\hat{\sigma}_{n} - 1 \|_{I} = o_{\Pr} \{ (\log h_{n}^{-1})^{-1} \}$.

By Steps 1 and 2 together with the fact that $\Ep[\| Z_{n}^{G} \|_{I}]=O(\sqrt{\log h_{n}^{-1}})$, we see that $\| \tilde{Z}_{n} \|_{I} = O_{\Pr}(\sqrt{\log h_{n}^{-1}})$. 
So,
\[
\| \hat{Z}_{n} - \tilde{Z}_{n} \|_{I} \leq \left \| \sigma_{n}/\hat{\sigma}_{n} - 1 \right \|_{I} \| \tilde{Z}_{n} \|_{I} = o_{\Pr} \{ (\log h_{n}^{-1})^{-1/2} \},
\]
and arguing as in the last part of the proof of  Step 2, we conclude that
\[
\sup_{z \in \R} | \Pr \{ \| \hat{Z}_{n} \|_{I} \leq z \} - \Pr \{ \| Z_{n}^{G} \|_{I} \leq z \} | \to 0.
\]
This completes the proof of Theorem \ref{thm: Gaussian approximation}. 
\end{proof}

\subsection{Proofs of Corollaries \ref{thm: uniform convergence rate} and  \ref{cor: uniform convergence rate}}

\begin{proof}[Proof of Corollary \ref{thm: uniform convergence rate}]
Step 2 in the proof of Theorem \ref{thm: Gaussian approximation} yields that 
\[
\| \hat{f}_{X} - \hat{f}_{X}^{*} \|_{\R} = o_{\Pr} \{ h_{n}^{-\alpha}(nh_{n})^{-1/2} \}
\]
(it is not difficult to verify that Assumption \ref{as: variance} was not used to derive this rate). 
Hence we have to show that $\| \hat{f}_{X}^{*} (\cdot) - \Ep[\hat{f}_{X}^{*}(\cdot)]  \|_{\R}= O_{\Pr} \{ h_{n}^{-\alpha}(nh_{n})^{-1/2}\sqrt{\log h_{n}^{-1}} \}$. 
To this end, we make use of Corollary 5.1 in \cite{ChChKa14a}. Invoke Lemma \ref{lem: VC type} and observe that $\| K_{n} \|_{\R} \lesssim h_{n}^{-\alpha}$ and $\sigma_{n}^{2} (x) \lesssim h_{n}^{-2\alpha+1}$. 
The latter bound follows from 
\[
\sigma_{n}^{2}(x) \leq \Ep[K_{n}^{2}((x-Y)/h_{n})] \leq h_{n} \| f_{Y} \|_{\R} \int_{\R} K_{n}^{2}(y) dy = \frac{h_{n} \| f_{Y} \|_{\R}}{2\pi} \int_{\R} \frac{| \varphi_{K}(t)|^{2}}{| \varphi_{\varepsilon}(t/h) |^{2}} dt \lesssim h_{n}^{-2\alpha+1}.
\]
Then application of Corollary 5.1 in \cite{ChChKa14a} to the function class $\{ f - \Ep[ f (Y) ] : f \in \mathcal{K}_{n} \}$ yields that 
\begin{align*}
&\Ep \left [  \left \| \sum_{j=1}^{n} \{ K_{n} ((\cdot - Y_{j})/h_{n}) - \Ep[K_{n}((\cdot -Y)/h_{n})] \} \right \|_{\R} \right ] \\
&\quad \lesssim h_{n}^{-\alpha}  \sqrt{n h_{n} \log h_{n}^{-1}} + h_{n}^{-\alpha} \log h_{n}^{-1} \\
&\quad \lesssim h_{n}^{-\alpha} \sqrt{nh_{n} \log h_{n}^{-1}}, 
\end{align*}
which in turn yields that 
\[
\Ep [ \| \hat{f}^{*}_{X}(\cdot) - \Ep[\hat{f}^{*}_{X}(\cdot)] \|_{\R}] \lesssim h_{n}^{-\alpha}(nh_{n})^{-1/2}  \sqrt{\log h_{n}^{-1}}.
\]
This completes the proof. 
\end{proof}

\begin{proof}[Proof of Corollary \ref{cor: uniform convergence rate}]
By assumption, 
$\int_{\R} x^{\ell} K(x) dx= 0$ for all $\ell =1,\dots, k$. 
Further, observe that 
\[
\Ep[  \hat{f}_{X}^{*}(x) ] = \frac{1}{2\pi} \int_{\R} e^{-itx} \varphi_{X}(t) \varphi_{K}(th_{n}) dt = [f_{X}*(h_{n}^{-1}K(\cdot/h))] (x) = \int_{\R} f_{X}(x-h_{n}y) K(y)dy.
\]
Hence, using the Taylor expansion, we have that
\begin{align*}
\| \Ep[ \hat{f}_{X}^{*}(\cdot) ] - f_{X}(\cdot) \|_{\R} &=\left \|  \int_{\R} \{ f_{X}(\cdot - h_{n}y) - f_{X}(\cdot) \} K(y) dy \right \|_{\R} \\
&\leq \frac{B h_{n}^{\beta}}{k!} \int_{\R} |y|^{\beta} |K(y)| dy.
\end{align*}
Combining the result of Corollary \ref{thm: uniform convergence rate}, we obtain the desired conclusion. 
\end{proof}

\subsection{Proofs of Theorem \ref{thm: multiplier bootstrap} and Corollary \ref{cor: multiplier bootstrap}}
\begin{proof}[Proof of Theorem \ref{thm: multiplier bootstrap}]
We divide the proof into three  steps. 

\textbf{Step 1}. Define 
\[
Z_{n}^{\xi}(x) = \frac{1}{\sigma_{n}(x)\sqrt{n}} \sum_{j=1}^{n} \xi_{j} \left \{ K_{n}((x-Y_{j})/h_{n}) - n^{-1} {\textstyle \sum}_{j'=1}^{n} K_{n}((x-Y_{j'})/h_{n}) \right \}
\]
for $x \in I$. We first prove that 
\[
\sup_{z \in \R} | \Pr \{ \| Z_{n}^{\xi} \|_{I} \leq z \mid \mathcal{D}_{n} \} - \Pr \{ \| Z_{n}^{G} \|_{I} \leq z \} | \stackrel{\Pr}{\to} 0.
\]
To this end, we make use of Theorem 2.2 in \cite{ChChKa16}. Recall the class of functions $\mG_{n}$ defined in the proof of Theorem \ref{thm: Gaussian approximation}, and let 
\[
\nu_{n}^{\xi}(g) = \frac{1}{\sqrt{n}} \sum_{j=1}^{n} \xi_{j} \left \{ g(Y_{j}) - n^{-1} {\textstyle \sum}_{j'=1}^{n} g(Y_{j'}) \right  \}, \ g \in \mG_{n}.
\]
Then application of Theorem 2.2 in \cite{ChChKa16} to $\| \nu_{n} \|_{\mG_{n}}$ with $B(f) \equiv 0, b \lesssim 1/\sqrt{h_{n}}, \sigma = 1, A \lesssim 1, v \lesssim 1, \gamma = 1/\log n$, and $q$ sufficiently large, yields that there exists a random variable $W_{n}^{\xi}$ of which the conditional distribution given $\mathcal{D}_{n}$ is the same as the distribution of $\| G_{n} \|_{\mG_{n}} (= \| Z_{n}^{G} \|_{I})$, i.e., $\Pr\{ W_{n}^{\xi} \leq z \mid \mathcal{D}_{n} \} = \Pr \{ \| Z_{n}^{G}\|_{I} \leq z \}$ for all $z \in \R$ almost surely,  and such that 
\[
| \| \nu_{n}^{\xi} \|_{\mG_{n}} - W_{n}^{\xi} | = o_{\Pr} \{ (\log h_{n}^{-1})^{-1/2} \},
\]
which shows that there exists a sequence of constants $\Delta_{n} \downarrow 0$ such that 
\[
\Pr \{ | \| \nu_{n}^{\xi} \|_{\mG_{n}} - W_{n}^{\xi} |  > \Delta_{n} (\log h_{n}^{-1})^{-1/2} \mid \mathcal{D}_{n} \} \stackrel{\Pr}{\to} 0
\]
by Markov's inequality. 
Since $\| \nu_{n}^{\xi} \|_{\mG_{n}} = \| Z_{n}^{\xi} \|_{I}$, we have that 
\begin{align*}
\Pr \{ \| Z_{n}^{\xi} \|_{I} \leq z \mid \mathcal{D}_{n} \} &\leq \Pr \{  W_{n}^{\xi} \leq z+\Delta_{n} (\log h_{n}^{-1})^{-1/2} \mid \mathcal{D}_{n}  \} + o_{\Pr}(1) \\
&=\Pr \{ \| Z_{n}^{G} \|_{I} \leq z+\Delta_{n} (\log h_{n}^{-1})^{-1/2} \} + o_{\Pr}(1)
\end{align*}
uniformly  in $z \in \R$, and the anti-concentration inequality (\ref{eq: AC bound}) yields that 
\[
\Pr \{ \| Z_{n}^{G} \|_{I} \leq z+\Delta_{n} (\log h_{n}^{-1})^{-1/2} \} \leq \Pr \{ \| Z_{n}^{G} \|_{I} \leq z \} + o(1)
\]
uniformly in $z \in \R$. Likewise, we have 
\[
\Pr \{ \| Z_{n}^{\xi} \|_{I} \leq z \mid \mathcal{D}_{n} \} \geq \Pr \{ \| Z_{n}^{G} \|_{I} \leq z \} - o_{\Pr}(1)
\]
uniformly in $z \in \R$. Therefore, we obtain the conclusion of this step.

\textbf{Step 2}. In view of the proof of Step 1, in order to prove the result (\ref{eq: bootstrap}), it is enough to prove that 
\[
\| \hat{Z}_{n}^{\xi} - Z_{n}^{\xi} \|_{I} = o_{\Pr}\{ (\log h_{n}^{-1})^{-1/2} \}.
\]
Define 
\[
\tilde{Z}_{n}^{\xi} = \frac{1}{\sigma_{n}(x)\sqrt{n}} \sum_{j=1}^{n} \xi_{j} \left \{ \hat{K}_{n}((x-Y_{j})/h_{n}) - n^{-1} {\textstyle \sum}_{j'=1}^{n} \hat{K}_{n}((x-Y_{j'})/h_{n}) \right \}
\]
for $x \in I$. We first prove that $\| \tilde{Z}_{n}^{\xi} - Z_{n}^{\xi} \|_{I} = o_{\Pr} \{ (\log h_{n}^{-1})^{-1/2} \}$. Step 2 in the proof of Theorem \ref{thm: Gaussian approximation} shows that 
\[
\left \| \frac{1}{\sigma_{n}(\cdot)\sqrt{n}} \sum_{j=1}^{n} \{ \hat{K}_{n}((\cdot - Y_{j})/h_{n}) - K_{n}((\cdot - Y_{j})/h_{n}) \} \right \|_{I}
\]
is $o_{\Pr}\{ (\log h_{n}^{-1})^{-1/2} \}$ (which is more that what we need), and so it remains to prove that 
\[
\left \| \frac{1}{\sigma_{n}(\cdot )\sqrt{n}} \sum_{j=1}^{n} \xi_{j}\{ \hat{K}_{n}((\cdot-Y_{j})/h_{n}) - K_{n}((\cdot - Y_{j})/h_{n}) \} \right \|_{I}
\]
is $o_{\Pr}\{ (\log h_{n}^{-1})^{-1/2} \}$. Since $\| 1/\sigma_{n} \|_{I} \lesssim h_{n}^{\alpha-1/2}$, it is enough to prove that
\[
\left \| \sum_{j=1}^{n} \xi_{j}\{ \hat{K}_{n}((\cdot-Y_{j})/h_{n}) - K_{n}((\cdot - Y_{j})/h_{n}) \} \right \|_{I} 
\]
is $o_{\Pr}\{  h_{n}^{-\alpha}(nh_{n})^{1/2} (\log h_{n}^{-1})^{-1/2} \}$. Observe that 
\begin{align*}
&\left | \sum_{j=1}^{n} \xi_{j} \{ \hat{K}_{n}((x-Y_{j})/h_{n}) -  K_{n}((x- Y_{j})/h_{n}) \} \right | \\
&\lesssim \int_{\R} \left | \sum_{j=1}^{n} \xi_{j} e^{itY_{j}/h_{n}} \right | \left |  \frac{1}{\hat{\varphi}_{\varepsilon} (t/h_{n})} - \frac{1}{\varphi_{\varepsilon}(t/h_{n})} \right | | \varphi_{K}(t) | dt \\
&\lesssim \left \{ \int_{\R} \left | \sum_{j=1}^{n} \xi_{j} e^{itY_{j}/h_{n}} \right |^{2} | \varphi_{K}(t) | dt \right \}^{1/2}  \left \{ \int_{\R} \left |  \frac{1}{\hat{\varphi}_{\varepsilon} (t/h_{n})} - \frac{1}{\varphi_{\varepsilon}(t/h_{n})} \right |^{2} | \varphi_{K}(t) | dt \right \}^{1/2} \\
&= O_{\Pr} (n^{1/2} h_{n}^{-2\alpha} m^{-1/2}),
\end{align*}
which is $o_{\Pr}\{ h_{n}^{-\alpha}(nh_{n})^{1/2}  (\log h_{n}^{-1})^{-1/2} \}$ by Assumption \ref{as: rate} (b). Therefore, we have $\| \tilde{Z}_{n}^{\xi} - Z_{n}^{\xi} \|_{I} = o_{\Pr} \{ (\log h_{n}^{-1})^{-1/2} \}$.

By Step 1, the fact that $\Ep [ \| Z_{n}^{G} \|_{I}] = O(\sqrt{\log h_{n}^{-1}})$ (see the proof of Theorem \ref{thm: Gaussian approximation}), and the previous result, we see that $\| \tilde{Z}_{n}^{\xi} \|_{I} = O_{\Pr}(\sqrt{\log h_{n}^{-1}})$.
Now, because  $\| \sigma_{n}/\hat{\sigma}_{n}- 1 \|_{I} = o_{\Pr} \{ (\log h_{n}^{-1})^{-1} \}$ from Step 3 in the proof of Theorem \ref{thm: Gaussian approximation}, we conclude that 
\[
\| \hat{Z}_{n}^{\xi} - \tilde{Z}_{n}^{\xi} \|_{I} \leq \| \sigma_{n}/\hat{\sigma}_{n}- 1 \|_{I} \| \tilde{Z}_{n}^{\xi} \|_{I} = o_{\Pr}\{ (\log h_{n}^{-1})^{-1/2} \},
\]
which leads to (\ref{eq: bootstrap}). 

\textbf{Step 3}. In this step, we shall verify the last two assertions of the theorem. The result (\ref{eq: bootstrap}) implies that there exists a sequence of constants $\Delta_{n} \downarrow 0$ such that with probability greater than $1-\Delta_{n}$, 
\begin{equation}
\sup_{z \in \R} | \Pr \{ \| \hat{Z}_{n}^{\xi} \|_{I} \leq z \mid \mathcal{D}_{n} \} - \Pr \{ \| Z_{n}^{G} \|_{I} \leq z \} | \leq \Delta_{n}. \label{eq: bootstrap 2}
\end{equation}
Taking $\Delta_{n} \downarrow 0$ more slowly if necessary, we also have 
\[
\sup_{z \in \R} | \Pr \{ \| \hat{Z}_{n} \|_{I} \leq z \} - \Pr \{ \| Z_{n}^{G} \|_{I} \leq z \} | \leq \Delta_{n}.
\]
Let $\mathcal{E}_{n}$ denote the event on which (\ref{eq: bootstrap 2}) holds, and let $c_{n}^{G}(u)$ denote the $u$-quantile of $\| Z_{n}^{G} \|_{I}$ for $u \in (0,1)$. Then on the event $\mathcal{E}_{n}$, 
\[
\Pr \{ \| \hat{Z}_{n}^{\xi} \|_{I} \leq c_{n}^{G}(1-\tau + \Delta_{n}) \mid \mathcal{D}_{n} \} \geq \Pr \{ \| Z_{n}^{G} \|_{I} \leq c_{n}^{G}(1-\tau + \Delta_{n}) \} - \Delta_{n} = 1-\tau,
\]
where the last equality holds since $\| Z_{n}^{G} \|_{I}$ has a continuous distribution function (recall the anti-concentration inequality (\ref{eq: AC bound})). This yields that the inequality 
\[
\hat{c}_{n}(1-\tau) \leq c_{n}^{G}(1-\tau + \Delta_{n})
\]
holds on $\mathcal{E}_{n}$, and so 
\begin{align*}
\Pr \{ \| \hat{Z}_{n} \|_{I} \leq \hat{c}_{n}(1-\tau) \} &\leq \Pr \{ \| \hat{Z}_{n} \|_{I} \leq c_{n}^{G}(1-\tau + \Delta_{n}) \} + \Pr (\mathcal{E}_{n}^{c}) \\
&\leq \Pr \{ \| Z_{n}^{G} \|_{I} \leq c_{n}^{G}(1-\tau + \Delta_{n}) \} + 2 \Delta_{n} \\
&= 1-\tau + 3 \Delta_{n}.
\end{align*}
Likewise, we have 
\[
\Pr \{ \| \hat{Z}_{n} \|_{I} \leq \hat{c}_{n}(1-\tau) \} \geq 1-\tau - 3\Delta_{n}.
\] 
This leads to the result (\ref{eq: validity of MB}). 

Finally, the Borell-Sudakov-Tsirelson inequality \citep[][Lemma A.2.2]{vaWe96} yields that 
\[
c_{n}^{G}(1-\tau+\Delta_{n}) \leq \Ep[ \| Z_{n}^{G} \|_{I}] + \sqrt{2\log (1/(\tau-\Delta_{n}))} \lesssim \sqrt{\log h_{n}^{-1}},
\]
which implies that $\hat{c}_{n}(1-\tau) = O_{\Pr}(\sqrt{\log h_{n}^{-1}})$. Furthermore,
\[
\sup_{x \in I} \hat{\sigma}_{n}(x) \leq \sup_{x \in I} \sigma_{n}(x) \cdot \sup_{x \in I} \frac{\hat{\sigma}_{n}(x)}{\sigma_{n}(x)} = O_{\Pr}(h_{n}^{-\alpha+1/2}).
\]
Therefore, the supremum width of the band $\hat{\mathcal{C}}_{n}$ is 
\[
2\sup_{x \in I} \frac{\hat{\sigma}_{n}(x)}{\sqrt{n}h_{n}} \hat{c}_{n}(1-\tau) = O_{\Pr}\left \{ h_{n}^{-\alpha}(nh_{n})^{-1/2}\sqrt{\log h_{n}^{-1}} \right \}.
\]
This completes the proof. 
\end{proof}

\begin{proof}[Proof of Corollary \ref{cor: multiplier bootstrap}]
Recall the stochastic process  $\tilde{Z}_{n}(x), x \in I$ defined in the proof of Theorem \ref{thm: Gaussian approximation}. Observe that $\| f_{X} (\cdot) - \Ep [ \hat{f}_{X}^{*}(\cdot) ] \|_{\R} = O(h_{n}^{\beta})$. Condition (\ref{eq: undersmoothing}) then yields that 
\begin{align*}
\frac{\sqrt{n}h_{n} (\hat{f}_{X}(x) - f_{X}(x))}{\hat{\sigma}_{n}(x)} &= \left [  1 + o_{\Pr} \{ (\log h_{n}^{-1})^{-1} \} \right ] \frac{\sqrt{n}h_{n} (\hat{f}_{X}(x) - f_{X}(x))}{\sigma_{n}(x)} \\
&= \left [  1 + o_{\Pr} \{ (\log h_{n}^{-1})^{-1} \} \right ] \tilde{Z}_{n} (x) + o \{ (\log h_{n}^{-1})^{-1/2} \} \\
&= \tilde{Z}_{n} (x) + o_{\Pr} \{ (\log h_{n}^{-1})^{-1/2} \}
\end{align*}
uniformly in $x \in I$, where we have used the facts that $\| \sigma_{n}/\hat{\sigma}_{n} - 1 \|_{I} = o_{\Pr} \{ (\log h_{n}^{-1})^{-1} \}$ and $\| \tilde{Z}_{n} \|_{I} = O_{\Pr} (\sqrt{\log h_{n}^{-1}})$ (these estimates are derived in the proof of Theorem \ref{thm: Gaussian approximation}). Using the anti-concentration inequality (\ref{eq: AC bound}) together with the result of Step 2 in the proof of Theorem \ref{thm: Gaussian approximation}, we have that
\[
\sup_{z \in \R} \left | \Pr \left \{  \left \| \sqrt{n}h_{n}(\hat{f}_{X}-f_{X})/\hat{\sigma}_{n} \right \|_{I} \leq z \right \} - \Pr \{ \| Z_{n}^{G} \|_{I} \leq z \} \right | \to 0.
\]
Now, arguing as in Step 3 in the proof of Theorem \ref{thm: multiplier bootstrap}, we conclude that 
\[
 \Pr \left \{  \left \| \sqrt{n}h_{n}(\hat{f}_{X}-f_{X})/\hat{\sigma}_{n} \right \|_{I} \leq \hat{c}_{n}(1-\tau) \right \} \to 1-\tau,
\]
which yields the desired result. 
\end{proof}

\subsection{Proof of Theorem \ref{thm: validity of MB panel}}

 For the notational convenience, in this proof, we assume $d=1$, i.e,, $W_{j,t}$ are univariate; the proof for the general case is completely analogous. 
Let $W_{j}^{+} = (W_{j,1}+W_{j,2})/2, W_{j}^{-} = (W_{j,1}-W_{j,2})/2$, and observe that 
\[
\hat{Y}_{j}^{\dagger} = Y_{j}^{\dagger} - (\hat{\theta}-\theta_{0})W_{j}^{+}, \ \hat{\eta}_{j} = \eta_{j} - (\hat{\theta}-\theta_{0})W_{j}^{-}.
\]
First, we shall show that 
\begin{equation}
\left \| \hat{f}_{U}  (\cdot) - \frac{1}{nh_{n}} \sum_{j=1}^{n} K_{n}((\cdot-Y_{j}^{\dagger})/h_{n}) \right \|_{\R} = o_{\Pr} \{ h_{n}^{-\alpha} (nh_{n})^{-1/2} (\log h_{n}^{-1})^{-1/2} \}, \label{eq: approx panel}
\end{equation}
where 
\[
K_{n}(u)= \frac{1}{2\pi} \int_{\R} e^{-itu} \frac{\varphi_{K}(t)}{\varphi_{\varepsilon}(t/h_{n})} dt.
\]
By Taylor's theorem, we have that $|e^{ity} - e^{itx} - (it) e^{itx} (y-x) | \leq t^{2}(y-x)^{2}/2$
for any $x,y \in \R$, which yields that
\[
\left | \hat{\varphi}_{Y^{\dagger}}(t) - \hat{\varphi}_{Y^{\dagger}}^{*}(t) +  \frac{(it)(\hat{\theta}-\theta_{0})}{n} \sum_{j=1}^{n} e^{itY_{j}^{\dagger}}W_{j}^{+}  \right | \leq \frac{t^{2} (\hat{\theta}-\theta_{0})^{2}}{2n} \sum_{j=1}^{n} (W_{j}^{+})^{2}.
\]
The right hand side is $O_{\Pr} (n^{-1}h_{n}^{-2})$ uniformly in $|t| \leq h_{n}^{-1}$. Observe that 
\[
\frac{1}{n} \sum_{j=1}^{n} e^{itY_{j}^{\dagger}}W_{j}^{+} = \frac{1}{n} \sum_{j=1}^{n} \{ e^{itY_{j}^{\dagger}}W_{j}^{+}  - \Ep[ e^{itY^{\dagger}_{1}} W_{1}^{+}] \} + \underbrace{\Ep[ e^{itY^{\dagger}_{1}} W_{1}^{+}]}_{=\varphi_{\varepsilon}(t) \Ep[ e^{itU_{1}} W_{1}^{+}]}.
\]
Following the proof of Theorem 4.1 in \cite{NeRe09} (cf. Lemma \ref{lem: rate ecf} ahead), under our assumption, we can show that 
\[
\sup_{|t| \leq h_{n}^{-1}} \left | \frac{1}{n} \sum_{j=1}^{n} \{ e^{itY_{j}^{\dagger}}W_{j}^{+}  - \Ep[ e^{itY^{\dagger}_{1}} W_{1}^{+}] \} \right | = O_{\Pr}\{ n^{-1/2} \log h_{n}^{-1} \}.
\]
Taking these together, we have that
\[
\sup_{|t| \leq h_{n}^{-1}} \left |\frac{ \hat{\varphi}_{Y^{\dagger}}(t) - \hat{\varphi}_{Y^{\dagger}}^{*}(t)}{\varphi_{\varepsilon}(t)} +(it)(\hat{\theta} - \theta_{0})  \Ep[ e^{itU_{1}} W_{1}^{+}]\right |  = O_{\Pr}( n^{-1}h_{n}^{-\alpha-2}). 
\]
Likewise, we have that
\[
\sup_{|t| \leq h_{n}^{-1}} \left |\frac{ \hat{\varphi}_{\varepsilon}(t) - \hat{\varphi}_{\varepsilon}^{*}(t)}{\varphi_{\varepsilon}(t)} + (it)(\hat{\theta} - \theta_{0}) \Ep[ W_{1}^{-} ] \right |  = O_{\Pr}(n^{-1}h_{n}^{-\alpha-2}), 
\]
which in particular ensures that $\sup_{|t| \leq h_{n}^{-1}} | \hat{\varphi}_{\varepsilon}(t)/\varphi_{\varepsilon}(t)-1| = o_{\Pr}(1)$ since $\sup_{|t| \leq h_{n}}  | \hat{\varphi}_{\varepsilon}^{*}(t)/\varphi_{\varepsilon}(t)-1| = o_{\Pr}(1)$ (cf. Step 2 in the proof of Theorem \ref{thm: Gaussian approximation}). Hence
\[
\sup_{u \in \R} \Bigg | \hat{f}_{U}(u) - \underbrace{\frac{1}{2\pi} \int_{\R} e^{-itu} \hat{\varphi}^{*}_{Y^{\dagger}} (t) \frac{\varphi_{K}(th_{n})}{\hat{\varphi}_{\varepsilon}(t)} dt}_{=\frac{1}{nh_{n}} \sum_{j=1}^{n} \hat{K}_{n}((x-Y^{\dagger}_{j})/h_{n})} \Bigg | \leq O_{\Pr}(n^{-1/2}) \int_{-h_{n}^{-1}}^{h_{n}^{-1}}  | t \Ep[ e^{itU_{1}} W_{1}^{+} ] | dt +  O_{\Pr}(n^{-1}h_{n}^{-\alpha-3}).
\]
By assumption, the first term on the right hand side is $o_{\Pr} \{ h_{n}^{-\alpha} (nh_{n})^{-1/2} (\log h_{n}^{-1})^{-1/2} \}$, and so is the second term since $\frac{\log h_{n}^{-1}}{nh_{n}^{5}}\to 0$. 
Therefore, we have that 
\[
\left \| \hat{f}_{U} (\cdot) - \frac{1}{nh_{n}} \sum_{j=1}^{n} \hat{K}_{n}((\cdot-Y^{\dagger}_{j})/h_{n}) \right \|_{\R} = o_{\Pr} \{ h_{n}^{-\alpha} (nh_{n})^{-1/2} (\log h_{n}^{-1})^{-1/2} \}.
\]
Furthermore, observe that 
\begin{align}
&\sup_{|t| \leq h_{n}^{-1}} \left |\frac{ \hat{\varphi}_{\varepsilon}(t) - \hat{\varphi}_{\varepsilon}^{*}(t)}{\hat{\varphi}_{\varepsilon}(t)} \right | 
\leq O_{\Pr} (1) \sup_{|t| \leq h_{n}^{-1}} \left |\frac{ \hat{\varphi}_{\varepsilon}(t) - \hat{\varphi}_{\varepsilon}^{*}(t)}{\varphi_{\varepsilon}(t)} \right | \notag \\
&\quad =
 O_{\Pr} (n^{-1/2}h_{n}^{-1}  + n^{-1} h_{n}^{-\alpha-2} ) = O_{\Pr} (n^{-1/2} h_{n}^{-1}), \label{eq: ecf estimated}
\end{align}
where the last equality follows since $nh_{n}^{2\alpha+2} \to \infty$, 
and observe that
\[
\frac{\varphi_{\varepsilon}(t)}{\hat{\varphi}_{\varepsilon}(t)} - 1 = \frac{\hat{\varphi}_{\varepsilon}^{*}(t)-\hat{\varphi}_{\varepsilon}(t)}{\hat{\varphi}_{\varepsilon}(t)} + \frac{1}{\hat{\varphi}_{\varepsilon}(t)} \{ \varphi_{\varepsilon}(t) - \hat{\varphi}_{\varepsilon}^{*}(t)\}.
\]
The first term on the right hand side is $O_{\Pr}(n^{-1/2}h_{n}^{-1})$ uniformly in $|t| \leq h_{n}^{-1}$, and $\| 1/\hat{\varphi}_{\varepsilon} \|_{[-h_{n}^{-1},h_{n}^{-1}]} \leq O_{\Pr}(1) \| 1/\varphi_{\varepsilon} \|_{[-h_{n}^{-1},h_{n}^{-1}]} = O_{\Pr}(h_{n}^{-\alpha})$. 
Combining these bounds and arguing as in Step 2 in the proof of Theorem \ref{thm: Gaussian approximation}, we obtain the result (\ref{eq: approx panel}). 
Note that the condition $\alpha > 1/2$ is used to ensure that $n^{-1/2}h_{n}^{-1} = o \{ h_{n}^{-\alpha} (nh_{n})^{-1/2} (\log h_{n}^{-1})^{-1/2} \}$. 

Second, let $\sigma_{n}^{2}(u)  = \Var (K_{n}((u-Y_{1}^{\dagger})/h_{n}))$ for $u \in I$, and we shall show that
\begin{equation}
\| \hat{\sigma}^{2}_{n}/\sigma^{2}_{n} - 1 \|_{I} = o_{\Pr}\{ (\log h_{n}^{-1})^{-1} \}.
\label{eq: variance panel}
\end{equation}
By Lemma \ref{lem: variance}, we have that $\inf_{u \in I} \sigma_{n}^{2}(u) \gtrsim h_{n}^{-2\alpha+1}$. From (\ref{eq: ecf estimated}), it is not difficult to verify that $\| \hat{K}_{n} - K_{n} \|_{\R} = O_{\Pr}(n^{-1/2} h_{n}^{-2\alpha} + n^{-1/2} h_{n}^{-\alpha-1})$, 
so that $\| \hat{K}_{n}^{2} - K_{n}^{2} \|_{\R} \leq \| \hat{K}_{n} + K_{n} \|_{\R} \| \hat{K}_{n} - K_{n} \|_{\R} = O_{\Pr}(n^{-1/2} h_{n}^{-3\alpha} + n^{-1/2} h_{n}^{-2\alpha-1})$, which is $o_{\Pr}\{h_{n}^{-2\alpha+1} (\log h_{n}^{-1})^{-1} \}$ under our assumption, so that 
\[
\frac{1}{n} \sum_{j=1}^{n} \hat{K}_{n}^{2}((u-\hat{Y}_{j}^{\dagger})/h_{n}) = \frac{1}{n} \sum_{j=1}^{n} K_{n}^{2}((u-\hat{Y}_{j}^{\dagger})/h_{n}) + o_{\Pr} \{h_{n}^{-2\alpha+1} (\log h_{n}^{-1})^{-1} \}
\]
uniformly in $u \in I$. We want to replace $\hat{Y}_{j}^{\dagger}$ by $Y_{j}^{\dagger}$ on the right hand side. Observe that 
$\| (K_{n}^{2})' \|_{\R} = \| 2 K_{n}' K_{n} \|_{\R} \lesssim h_{n}^{-2\alpha}$, so that 
\begin{align*}
&\left \| \frac{1}{n} \sum_{j=1}^{n} \{ K_{n}^{2}((\cdot-\hat{Y}_{j}^{\dagger})/h_{n}) - K_{n}^{2}((\cdot-Y_{j}^{\dagger})/h_{n}) \} \right \|_{I} \lesssim h_{n}^{-2\alpha-1} | \hat{\theta} - \theta_{0} | \frac{1}{n}\sum_{j=1}^{n} | W_{j}^{+} | \\
&\quad = O_{\Pr} (n^{-1/2} h_{n}^{-2\alpha-1}) = o_{\Pr} \{h_{n}^{-2\alpha+1} (\log h_{n}^{-1})^{-1} \}. 
\end{align*}
Applying a similar analysis to the term  $n^{-1}\sum_{j=1}^{n} \hat{K}_{n}((x-\hat{Y}_{j}^{\dagger})/h_{n})$, we conclude that 
\[
\hat{\sigma}_{n}^{2}(u) = \underbrace{\frac{1}{n} \sum_{j=1}^{n} K_{n}^{2}((u-Y_{j}^{\dagger})/h_{n})  - \left ( \frac{1}{n} \sum_{j=1}^{n}K_{n}((u-Y_{j}^{\dagger})/h_{n})  \right )^{2}}_{=(\star)} + o_{\Pr} \{h_{n}^{-2\alpha+1} (\log h_{n}^{-1})^{-1} \}
\]
uniformly in $u \in I$. Finally, Step 3 in the proof of Theorem \ref{thm: Gaussian approximation} shows that $(\star) = o_{\Pr} \{ (\log h_{n}^{-1})^{-1} \}$ uniformly in $u \in I$, so that $\| \hat{\sigma}_{n}^{2}/\sigma_{n}^{2} - 1 \|_{I} = o_{\Pr} \{ (\log h_{n}^{-1})^{-1} \}$.

Now, from the proof of Theorem \ref{thm: Gaussian approximation}, together with that $\| \Ep[ h_{n}^{-1} K_{n}((\cdot-Y_{1}^{\dagger})/h_{n})] - f_{U}(\cdot) \|_{I} \lesssim h_{n}^{\beta} = o \{h_{n}^{-2\alpha+1} (\log h_{n}^{-1})^{-1} \}$ by our choice of the bandwidth, we conclude that there exists a tight Gaussian random variable $Z_{n}^{G}$ in $\ell^{\infty}(I)$ with mean zero and covariance function $\Cov (Z_{n}^{G}(u),Z_{n}^{G}(v)) = \Cov \{ K_{n}((u-Y_{1}^{\dagger})/h_{n}),K_{n}((v-Y_{1}^{\dagger})/h_{n}) \}/\{ \sigma_{n}(u)\sigma_{n}(v) \}$ for $u,v \in I$, and such that as $n \to \infty$, 
\[
\sup_{z \in \R} \left | \Pr \left \{ \left \| \sqrt{n} h_{n} (\hat{f}_{U} - f_{U})/\hat{\sigma}_{n} \right \|_{I} \leq z \right \} - \Pr \{ \| Z_{n}^{G} \|_{I} \leq z \} \right | \to 0.
\]
In view of the proof of Theorem \ref{thm: multiplier bootstrap}, the desired result follows as soon as we verify that 
\[
\sup_{z \in \R} \left | \Pr \{ \| \hat{Z}_{n}^{\xi} \|_{I} \leq z \mid \mathcal{D}_{n} \} - \Pr \{ \| Z_{n}^{G} \|_{I} \leq z \} \right | \stackrel{\Pr}{\to} 0.
\]
From the proof of Theorem \ref{thm: multiplier bootstrap} and the result (\ref{eq: variance panel}), what we need to verify is that 
\[
\left \| \sum_{j=1}^{n} \xi_{j} \{ \hat{K}_{n}((\cdot-\hat{Y}_{j}^{\dagger})/h_{n}) - K_{n}((\cdot - Y_{j}^{\dagger})/h_{n}) \} \right \|_{I} = o_{\Pr} \{ h_{n}^{-\alpha}(nh_{n})^{1/2} (\log h_{n}^{-1})^{-1/2} \}.
\]
Observe that 
\begin{multline*}
| \hat{K}_{n}((u-\hat{Y}_{j}^{\dagger})/h_{n}) -\hat{K}_{n}((u-Y_{j}^{\dagger})/h_{n}) - (\hat{\theta} - \theta_{0})\hat{K}_{n}'((u-Y_{j}^{\dagger})/h_{n}) W_{j}^{+}/h_{n} | \\
\leq \frac{(\hat{\theta}- \theta_{0})^{2}}{2h_{n}^{2}} \| \hat{K}_{n}'' \|_{\R} (W_{j}^{+})^{2}.
\end{multline*}
Since $\| \hat{K}_{n}' \|_{\R} \lesssim \| 1/\hat{\varphi}_{\varepsilon} \|_{[-h_{n}^{-1},h_{n}^{-1}]} = O_{\Pr}(h_{n}^{-\alpha})$, we have that 
\begin{align}
&\left \| \sum_{j=1}^{n} \xi_{j} \{ \hat{K}_{n}((\cdot-\hat{Y}_{j}^{\dagger})/h_{n}) - \hat{K}_{n}((\cdot - Y_{j}^{\dagger})/h_{n}) \} \right \|_{I} \notag \\
&\leq O_{\Pr}(n^{-1/2} h_{n}^{-1}) \left \| \sum_{j=1}^{n} \xi_{j}\hat{K}_{n}((\cdot-Y_{j}^{\dagger})/h_{n}) W_{j}^{+} \right \|_{I} + O_{\Pr}(h_{n}^{-\alpha-2}), 
\label{eq: MB mid panel}
\end{align}
where $h_{n}^{-\alpha-2}  = o \{ h_{n}^{-\alpha}(nh_{n})^{1/2} (\log h_{n}^{-1})^{-1/2} \}$ under our assumption. Observe that 
\begin{align*}
&\left \| \sum_{j=1}^{n} \xi_{j}\hat{K}_{n}((\cdot-Y_{j}^{\dagger})/h_{n}) W_{j}^{+} \right \|_{I} \lesssim \int_{\R} \left | \sum_{j=1}^{n} \xi_{j} e^{itY_{j}^{\dagger}/h_{n}} W_{j}^{+} \right | \left | \frac{\varphi_{K}(t)}{\hat{\varphi}_{\varepsilon}(t/h_{n})} \right | dt \\
&\quad \lesssim O_{\Pr}(h_{n}^{-\alpha}) \int_{-1}^{1}\left | \sum_{j=1}^{n} \xi_{j} e^{itY_{j}^{\dagger}/h_{n}} W_{j}^{+} \right |  dt = O_{\Pr}(n^{1/2} h_{n}^{-\alpha}). 
\end{align*}
Hence the first term on the right hand side of (\ref{eq: MB mid panel}) is $O_{\Pr} (h_{n}^{-\alpha-1}) = o \{ h_{n}^{-\alpha}(nh_{n})^{1/2} (\log h_{n}^{-1})^{-1/2} \}$. Finally, we shall show that 
\[
\left \| \sum_{j=1}^{n} \xi_{j} \{ \hat{K}_{n}((\cdot-Y_{j}^{\dagger})/h_{n}) - K_{n}((\cdot - Y_{j}^{\dagger})/h_{n}) \} \right \|_{I} = o_{\Pr} \{ h_{n}^{-\alpha}(nh_{n})^{1/2} (\log h_{n}^{-1})^{-1/2} \},
\]
but this follows from mimicking Step 2 in the proof of Theorem \ref{thm: multiplier bootstrap} using the bound (\ref{eq: ecf estimated}). This completes the proof. 
\qed

\subsection{Proofs for Section \ref{sec: supersmooth}}

We first point out that the expansion (\ref{eq: variance supersmooth}) holds uniformly in $x \in I$ under our assumption. This follows from the proof of Theorem 1.5 in \cite{EsUh05} and the observation that 
\begin{equation}
\Var (\cos ((x-Y)/h_{n})) \to \frac{1}{2} 
\label{eq: variance cosine}
\end{equation}
as $n \to \infty$ uniformly in $x \in I$ (in fact in $x \in \R$). To see that (\ref{eq: variance cosine}) holds uniformly in $x \in \R$, observe that 
\begin{align*}
 \cos^{2}((x-Y)/h_{n})  &= \frac{\cos (2(x-Y)/h_{n}) + 1}{2}\\
& = \frac{1}{2} + \frac{1}{2} \left \{ \cos (2x/h_{n}) \cos (2Y/h_{n}) + \sin (2x/h_{n}) \sin (2Y/h_{n}) \right \}.
\end{align*}
The Riemann-Lebesgue lemma yields that both $\Ep[ \cos (2Y/h_{n})]$ and $\Ep[ \sin (2Y/h_{n})]$ converge to $0$, and since the cosine and sine functions are bounded by $1$, we have that $\Ep[ \cos^{2}((x-Y)/h_{n})  ] \to 1/2$ uniformly in $x \in \R$. Likewise, we have that $\Ep [\cos ((x-Y)/h_{n})] \to 0$ uniformly in $x \in \R$.

Now, the proof of Theorem \ref{thm: supersmooth} is almost identical to the proofs of Theorems \ref{thm: Gaussian approximation} and \ref{thm: multiplier bootstrap} in the ordinary smooth case. The only changes that have to be taken into account are (\ref{eq: variance supersmooth}) and (\ref{eq: lower bound on chf supersmooth}), which imply that $\| K_{n}((x-\cdot)/h_{n})/\sigma_{n}(x) \|_{\R} \lesssim h_{n}^{-\gamma (1+\lambda)}$, for example. 
To avoid repetitions, we omit the details for brevity. In view of the proof of Corollary \ref{cor: multiplier bootstrap}, Corollary \ref{cor: supersmooth}  directly follows from Theorem \ref{thm: supersmooth}. 
\qed

%\section{Proof of Lemma \ref{lem: quadratic variation} and uniform convergence rates of the empirical characteristic function}
%
%\subsection{Proof of Lemma \ref{lem: quadratic variation}}

%The proof is almost identical to the proof of Lemma 1 in \cite{GiNi09}. First, observe that $h$ can be written as $h=g \circ f$ where $f$ is nondecreasing with range contained in $[0,V_{2}(h)]$, and $g$ is $1/2$-H\"{o}lder continuous on $[0,V_{2}(h)]$. Consider $\mathcal{M} = \{ x \mapsto f(ax + b) : a,b \in \R \}$. 
%
%\qed

\section{Uniform convergence rates of the empirical characteristic function}
\label{sec: appendix}

In this appendix, we establish rates of convergence of the empirical characteristic function on expanding sets. The proof of the following lemma is due essentially to \citet[][Theorem 4.1]{NeRe09}. 

Let $F$ be a distribution function on $\R$ with characteristic function $\varphi (t) = \int_{\R} e^{itx} dF(x)$, and let $X_{1},\dots,X_{n}$ be an independent sample from $F$. 
Let $F_{n}(x) = n^{-1}\sum_{j=1}^{n} 1_{(-\infty,x]}(X_{j})$ be the empirical distribution function, and let $\varphi_{n}(t) = \int_{\R} e^{itx} dF_{n}(x) = n^{-1} \sum_{j=1}^{n}e^{itX_{j}}$ be the empirical characteristic function. 

\begin{lemma}
\label{lem: rate ecf}
Suppose that $\int_{\R} |x|^{p} dF(x) < \infty$ for some $p > 0$. Then for any $\delta > 0$ and any $T_{n} \to \infty$, we have
\[
\| \varphi_{n} - \varphi \|_{[-T_{n},T_{n}]}  = O_{\Pr}\{ n^{-1/2} (\log T_{n})^{1/2+\delta}  \}.
\]
\end{lemma}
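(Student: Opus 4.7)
The plan is to combine a truncation argument with a discretization-and-concentration bound, following Neumann and Reiss (2009). Write
\[
\varphi_n(t) - \varphi(t) = \frac{1}{n}\sum_{j=1}^n \bigl\{ e^{itX_j} - \varphi(t) \bigr\},
\]
a sum of i.i.d.\ complex random variables bounded by $2$ in modulus. The key difficulty is that $t \mapsto e^{itx}$ has Lipschitz constant $|x|$, which is not uniformly bounded and need not be integrable when $p < 1$, so one cannot directly control fluctuations between discretization points.

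First I would introduce a truncation level $M_n \to \infty$ and split
\[
e^{itX_j} = e^{itX_j} \mathbf{1}_{|X_j|\leq M_n} + e^{itX_j} \mathbf{1}_{|X_j|> M_n}.
\]
The tail contribution, uniform in $t$, is bounded by
\[
\frac{1}{n}\sum_{j=1}^n \mathbf{1}_{|X_j|>M_n} + \Pr(|X_1|>M_n),
\]
and Markov's inequality together with the moment assumption $\mathrm{E}[|X_1|^p]<\infty$ gives $\Pr(|X_1|>M_n) \leq M_n^{-p} \mathrm{E}[|X_1|^p]$; the empirical analogue is of the same stochastic order. Thus this remainder is $O_{\Pr}(M_n^{-p})$ uniformly in $t \in \R$.

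Next, for the truncated process $\tilde{\varphi}_n(t) = n^{-1}\sum_j e^{itX_j}\mathbf{1}_{|X_j|\leq M_n}$ I would place a grid $t_1<\cdots<t_N$ on $[-T_n,T_n]$ with mesh $\varepsilon_n$, so $N \lesssim T_n/\varepsilon_n$. Since $t \mapsto e^{itx}\mathbf{1}_{|x|\leq M_n}$ is $M_n$-Lipschitz, for $t$ within $\varepsilon_n$ of the nearest grid point $t_k$,
\[
|\tilde{\varphi}_n(t) - \mathrm{E}\tilde{\varphi}_n(t) - \tilde{\varphi}_n(t_k) + \mathrm{E}\tilde{\varphi}_n(t_k)| \leq 2\varepsilon_n M_n.
\]
At each grid point Hoeffding's inequality applied to the real and imaginary parts yields $\Pr\{|\tilde{\varphi}_n(t_k)-\mathrm{E}\tilde{\varphi}_n(t_k)|>\eta\} \leq 4e^{-cn\eta^2}$, and a union bound over $N$ points delivers
\[
\max_{1\leq k\leq N} |\tilde{\varphi}_n(t_k) - \mathrm{E}\tilde{\varphi}_n(t_k)| = O_{\Pr}\!\left(n^{-1/2}\sqrt{\log N}\right).
\]
Finally, the bias $\mathrm{E}\tilde{\varphi}_n(t) - \varphi(t)$ is bounded uniformly by $\Pr(|X_1|>M_n) \lesssim M_n^{-p}$.

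Balancing the three error terms is the main technical step. I would take $M_n \asymp n^{1/(2p)}$ so that $M_n^{-p} \lesssim n^{-1/2}$, and $\varepsilon_n$ so small that $\varepsilon_n M_n \lesssim n^{-1/2}(\log T_n)^{1/2+\delta}$, e.g.\ $\varepsilon_n = n^{-1}$. With such a choice, $\log N \lesssim \log T_n + \log n$, and in the applications of interest $T_n$ grows at least polynomially in $n$ (here $T_n = h_n^{-1}$ with $nh_n^c\to\infty$ for some $c>0$), so $\log n \lesssim \log T_n$ and the overall bound becomes $O_{\Pr}(n^{-1/2}\sqrt{\log T_n})$, which is absorbed into $n^{-1/2}(\log T_n)^{1/2+\delta}$ for any $\delta>0$. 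The slack provided by $\delta$ is precisely what buffers the logarithmic factors that appear from the grid size and truncation. The main obstacle is this balancing, which is delicate only because the Lipschitz constant of $e^{it\cdot}$ is unbounded; the extra flexibility $\delta>0$ in the statement exists precisely to accommodate the interplay between truncation, discretization, and concentration.
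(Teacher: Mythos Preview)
Your approach is genuinely different from the paper's. The paper does not redo any truncation/discretization argument: it simply invokes Theorem~4.1 of Neumann and Rei\ss\ (2009), which states that for the weight $w(t)=(\log(e+|t|))^{-1/2-\delta}$ one has $\sup_{n}\Ep[\|\sqrt{n}(\varphi_n-\varphi)w\|_{\R}]<\infty$, and then localizes via $\|\varphi_n-\varphi\|_{[-T_n,T_n]}\inf_{|t|\le T_n}w(t)\le \|(\varphi_n-\varphi)w\|_{\R}$. All the work is outsourced to the weighted global bound, and the conclusion holds for \emph{every} $T_n\to\infty$ with no relation to $n$.

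Your elementary route (truncate, grid, Hoeffding, union bound) is correct in spirit but, as you yourself flag, it does \emph{not} prove the lemma as stated. After balancing you obtain $O_{\Pr}\{n^{-1/2}\sqrt{\log N}\}$ with $\log N\lesssim \log T_n + c_p\log n$ (the $\log n$ coming from the mesh $\varepsilon_n\lesssim n^{-1/2}M_n^{-1}$ with $M_n\asymp n^{1/(2p)}$). You then absorb $\log n$ into $\log T_n$ by assuming $T_n$ grows at least polynomially in $n$. But the lemma asserts the bound for \emph{any} $T_n\to\infty$; if, say, $T_n=\log n$, your argument yields $O_{\Pr}(n^{-1/2}\sqrt{\log n})$, which is strictly larger than the claimed $n^{-1/2}(\log T_n)^{1/2+\delta}=n^{-1/2}(\log\log n)^{1/2+\delta}$. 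So there is a genuine gap between what you prove and what the lemma claims. The paper's route avoids this entirely because the Neumann--Rei\ss\ weighted bound already decouples the rate from the growth of $T_n$; the extra $\delta>0$ in the statement is not there to absorb discretization slack, but is inherited directly from the exponent in the weight $w$.
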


\begin{proof}
Let $w (t) = (\log (e+|t|))^{-1/2-\delta}$. According to Theorem 4.1 in \cite{NeRe09}, it follows that 
\[
C := \sup_{n \geq 1} \Ep [ \| \sqrt{n} (\varphi_{n} - \varphi) w \|_{\R} ] < \infty.
\]
Now, because 
\[
\| \sqrt{n} (\varphi_{n} - \varphi) w \|_{\R} \geq \sqrt{n} \| \varphi_{n} - \varphi \|_{[-T_{n},T_{n}]} \inf_{|t| \leq T_{n}} w(t),
\]
we conclude that 
\[
\Ep [ \| \varphi_{n} - \varphi \|_{[-T_{n},T_{n}]} ] \leq \frac{C}{\sqrt{n} \inf_{|t| \leq T_{n}} w(t)} = O\{ n^{-1/2} (\log T_{n})^{1/2+\delta}  \},
\]
which leads to the desired result by Markov's inequality. 
\end{proof}

It is worthwhile to point out that the restriction to the set $|t| \leq T_{n}$ in Lemma \ref{lem: rate ecf} is essential.
% Intuitively, this can be explained by the observation that $\varphi_{n}$ is the characteristic function of the discrete distribution (i.e., the empirical distribution function), and so is ``almost'' periodic \citep[cf.][Chapter VI]{Ka04} and $\limsup_{|t| \to \infty} |\varphi_{n}(t)|=1$, while if $F$ is absolutely continuous, then $\varphi (t) \to 0$ as $|t| \to \infty$ by the Riemann-Lebesgue lemma.
In fact, although the class of functions $\{ x \mapsto e^{itx} : t \in \R \}$ is uniformly bounded, it is in general not Glivenko-Cantelli (nor Donsker, of course). 
See \cite{FeMu77} for details. 

\section{Auction Data}

The source data for our empirical application can be obtained from the Center for the Study of Auctions, Procurements and Competition Policy hosted by Penn State University.
We pre-process bid values in this source data and obtain firms' values $(Y^{(1)},Y^{(2)})$ based on an equilibrium restriction (Bayesian Nash equilibrium) for the first-price sealed-bid auction mechanism -- we use the same procedure as the one used in \cite{LiPeVu00}. See also \cite{GuPeVu00}.
While the original sample consists of 217 tracts with two firms in each tract, we obtain 169 tracts with 2 firms in each tract as a result of trimming.
Figure \ref{fig:application_data} depicts a simple kernel density estimate of the values $(Y^{(1)},Y^{(2)})$ in the logarithm of US dollars per acre.
This figure essentially reproduces Figure 3 of \cite{LiPeVu00}.
Note that the value distribution is bimodal.

\begin{figure}
	\centering
		\includegraphics[width=0.75\textwidth]{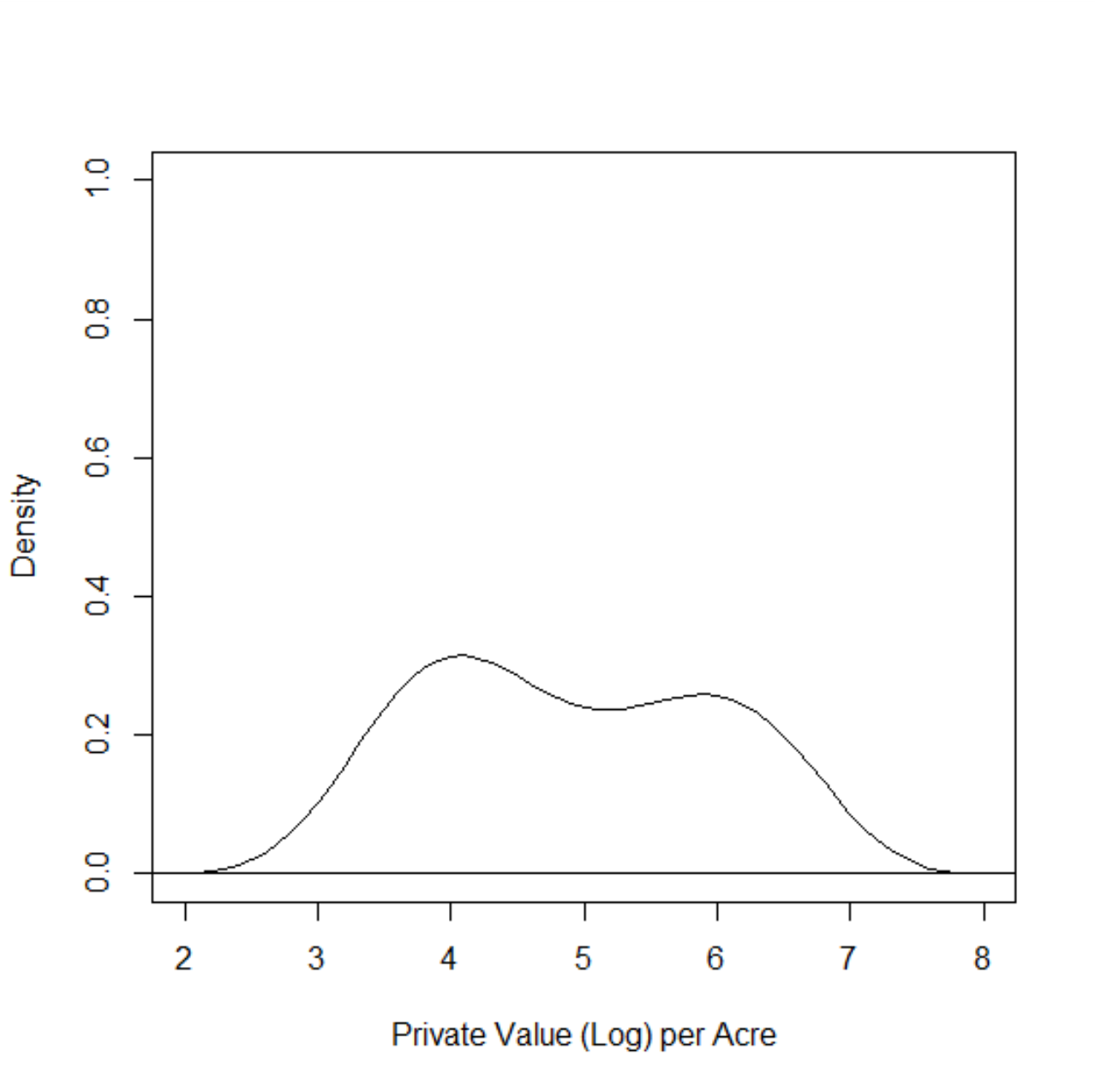}
	\caption{{\small A kernel density estimate of the values $(Y^{(1)},Y^{(2)})$ in the logarithm of US dollars per acre.}}
	\label{fig:application_data}
\end{figure}


\begin{thebibliography}{99}
\bibitem[Ackerberg et al.(2006)]{AcCaFr06}
Ackerberg, D.A., Caves, K., and Frazer, G. (2006). Structural identification of production functions. Unpublished manuscript.
\bibitem[Adusumilli et al.(2016)]{AdOtWh16}
Adusumilli, K., Otsu, T., and Whang Y.-J. (2016). Inference on distribution functions under measurement error. Unpublished manuscript.
%\bibitem[Aigner et al.(1977)]{AiLoSc77}
%Aigner, D., Lovell, C.A.K., and Schmidt, P. (1977). Formulation and estimation of stochastic frontier production function models. \textit{J. Econometrics} \textbf{6} 21-37.
\bibitem[Armstrong and Koles\'{a}r(2017)]{ArKo14}
Armstrong, T. and Kols\'{a}r, M. (2017). A simple adjustment for bandwidth snooping. \textit{Rev. Econom. Stud.}, forthcoming. 
\bibitem[Babii(2016)]{Ba16}
Babii, A. (2016). Honest confidence sets in nonparametric IV regression and other ill-posed models. arXiv:1611.03015.
\bibitem[Bickel and Rosenblatt(1973)]{BiRo73}
Bickel, P. and Rosenblatt, M. (1973). On some global measures of the deviations of density function estimates. \textit{Ann. Statist.} \textbf{1} 1071-1095. Correction (1975) \textbf{3} 1370.
\bibitem[Bissantz et al.(2007)]{BiDuHoMu07}
Bissantz, N., D\"{u}mbgen, L., Holzmann, H., and Munk, A. (2007). Non-parametric confidence bands in deconvolution density estimation. \textit{J. R. Stat. Soc. Ser. B. Stat. Methodol.} \textbf{69} 483-506.
\bibitem[Bissantz and Holzmann(2008)]{BiHo08}
Bissantz, N. and Holzmann, H. (2008). Statistical inference for inverse problems. \textit{Inverse Problems} 24:034009.
\bibitem[Bohnomme and Robin(2010)]{BoRo10}
Bohnomme, S. and Robin, J.-M. (2010). Generalized nonparametric deconvolution with an application to earnings dynamics. \textit{Rev. Econom. Stud.} \textbf{77} 491-533.
\bibitem[Bonhomme and Sauder(2011)]{BoSa11}
Bonhomme, S. and Sauder, U. (2011). Recovering distributions in difference-in-differences models: a comparison of selective and comprehensive schooling. \textit{Rev. Econ. Stat.} \textbf{93} 479-494.
\bibitem[Bourdaud et al.(2006)]{BoLaSi06}
Bourdaud, G., Lanza de Cristoforis, M., and Sickel, W. (2006). Superposition operators and functions of bounded $p$-variation. \textit{Rev. Mat. Iberoamericana} \textbf{22} 455-487.
\bibitem[Calonico et al.(2017)]{CaCaFa15}
Calonico, S., Cattaneo, M.D., and Farrell, M.H. (2017). On the effect of bias estimation on coverage accuracy in nonparametric inference. \textit{J. Amer. Stat. Assoc.}, forthcoming. 
\bibitem[Carroll and Hall(1988)]{CaHa88}
Carroll, R.J. and Hall, P. (1988). Optimal rates of convergence for deconvolving a density. \textit{J. Amer. Statist. Assoc.} \textbf{83} 1184-1186.
\bibitem[Carroll et al.(2006)]{CaRuStCr06}
Carroll, R.J., Ruppert, D., Stefanski, L.A., and Crainiceanu, C.M. (2006). \textit{Measurement Error in Nonlinear Models: A Modern Perspective (2nd Edition)}. Chapman \& Hall/CRC.
\bibitem[Chen and Christensen(2015)]{ChCh15}
Chen, X. and Christensen, T. (2015). Optimal sup-norm rates, adaptivity and inference in nonparametric instrumental variables estimation. arXiv:1508.03365.
\bibitem[Chen et al.(2011)]{ChHoNe11}
Chen X, Hong, H, and Nekipelov, D. (2011). Nonlinear models of measurement errors. \textit{J. Econ. Lit.} \textbf{49} 901-937.
\bibitem[Chernozhukov et al.(2014a)]{ChChKa14a}
Chernozhukov, V., Chetverikov, D., and Kato, K. (2014a). Gaussian approximation of suprema of empirical processes. \textit{Ann. Statist.} \textbf{42} 1564-1597.
\bibitem[Chernozhukov et al.(2014b)]{ChChKa14b}
Chernozhukov, V., Chetverikov, D., and Kato, K. (2014b). Anti-concentration and honest, adaptive confidence bands. \textit{Ann. Statist.} \textbf{42} 1787-1818.
\bibitem[Chernozhukov et al.(2015)]{ChChKa15}
Chernozhukov, V., Chetverikov, D., and Kato, K. (2015). Comparison and anti-concentration bounds for maxima of Gaussian random vectors. \textit{Probab. Theory Related Fields} \textbf{162} 47-70. 
\bibitem[Chernozhukov et al.(2016)]{ChChKa16}
Chernozhukov, V., Chetverikov, D., and Kato, K. (2016). Empirical and multiplier bootstraps for suprema of empirical processes of increasing complexity, and related Gaussian couplings. \textit{Stochastic Process. Appl.}, to appear. arXiv:1502:00352.
\bibitem[Claeskens and Van Keilegom(2003)]{Clva03}
Claeskens, G. and Van Keilegom, I. (2003). Bootstrap confidence bands for regression curves and their derivatives. \textit{Ann. Statist.} \textbf{31} 1852-1884.
\bibitem[Comte and Lacour(2011)]{CoLa11}
Comte, F. and Lacour, C. (2011). Data-driven density estimation in the presence of additive noise with unknown distribution. \textit{J. R. Stat. Soc. Ser. B. Stat. Methodol.} \textbf{73} 601-627.
% \bibitem[Cs\"{o}rg\H{o}(1981)]{Cs81}
% Cs\"{o}rg\H{o}, S. (1981). Limit behavior of the sample empirical characteristic function. \textit{Ann. Probab.} \textbf{9} 131-144. 
\bibitem[Dattner et al.(2016)]{DaReTr16}
Dattner, I., Rei\ss, M., and Trabs, M. (2016). Adaptive quantile estimation in deconvolution with unknown error distribution. \textit{Bernoulli} \textbf{22} 143–192. 
\bibitem[Delaigle and Gijbels(2004)]{DeGi04}
Delaigle, A. and Gijbels, I. (2004). Practical bandwidth selection in deconvolution kernel density estimation. \textit{Comput. Statist. Data Anal.} \textbf{45} 249-267.
\bibitem[Delaigle and Hall(2016)]{DeHa16}
Delaigle, A. and Hall, P. (2016). Methodology for nonparametric deconvolution when the error distribution is unknown. \textit{J. R. Stat. Soc. Ser. B. Stat. Methodol.} \textbf{78} 231-252.
\bibitem[Delaigle et al.(2015)]{DeHaJa15}
Delaigle, A., Hall, P., and Jamshidi, F. (2015). Confidence bands in nonparametric errors-in-variables regression. \textit{J. R. Stat. Soc. Ser. B. Stat. Methodol.} \textbf{77} 149-169.
\bibitem[Delaigle et al.(2008)]{DeHaMe08}
Delaigle, A., Hall, P., and Meister, A. (2008). On deconvolution with repeated measurements. \textit{Ann. Statist.} \textbf{36} 665-685. 
\bibitem[Diggle and Hall(1993)]{DiHa93}
Diggle, P.J. and Hall, P. (1993). A Fourier approach to nonparametric deconvolution of a density estimate. \textit{J. Roy. Stat. Soc. Ser. B. Stat. Methodol.} \textbf{55} 523-531.
\bibitem[Dudley(2002)]{Du02}
Dudley, R.M. (2002). \textit{Real Analysis and Probability}. Cambridge University Press.
\bibitem[Efromovich(1997)]{Ef97}
Efromovich, S. (1997). Density estimation for the case of supersmooth measurement error. \textit{J. Amer. Stat. Assoc.} \textbf{92} 526-535.
\bibitem[van Es and Gugushvili(2008)]{EsGu08}
van Es, B. and Gugushvili, S. (2008). Weak convergence of the supremum distance for supersmooth kernel deconvolution. \textit{Statist. Probab. Lett.} \textbf{78} 2932-2938. 
\bibitem[van Es and Uh(2005)]{EsUh05}
van Es, B. and Uh, H.-W. (2005). Asymptotic normality of kernel-type deconvolution estimators. \textit{Scand. J. Statist.} \textbf{32} 467-483. 
\bibitem[Eubank and Speckman(1993)]{EuSp93}
Eubank, R.L. and Speckman, P.L. (1993). Confidence bands in nonparametric regression. \textit{J. Amer. Stat. Assoc.} \textbf{88} 1287-1301.
\bibitem[Fan(1991a)]{Fa91a}
Fan, J. (1991a). On the optimal rates of convergence for nonparametric deconvolution problems. \textit{Ann. Statist.} \textbf{19} 1257-1272.
\bibitem[Fan(1991b)]{Fa91b}
Fan, J. (1991b). Asymptotic normality for deconvolution kernel density estimators. \textit{Sankhya A} \textbf{53} 97-110.
%\bibitem[Fan and Liu(1997)]{FaLi97}
%Fan, Y. and Liu, Y. (1997) A note on asymptotic normality for deconvolution kernel density estimators. \textit{Sankhya A} \textbf{59} 138-141.
\bibitem[Feurerverger and Mureika(1977)]{FeMu77}
Feurerverger, A. and Mureika, R.A. (1977). Empirical characteristic function and its applications. \textit{Ann. Statist.} \textbf{5} 88-97. 
\bibitem[Folland(1999)]{Fo99}
Folland, G.B. (1999). \textit{Real Analysis (2nd Edition)}. Wiley. 
\bibitem[Fuller(1987)]{Fu87}
Fuller, W.A. (1987). \textit{Measurement Error Models}. Wiley.
\bibitem[Gin\'{e} and Nickl(2009)]{GiNi09}
Gin\'{e}, E. and Nickl, R. (2009). Uniform limit theorems for wavelet density estimators. \textit{Ann. Probab.} \textbf{37} 1605-1646.
%\bibitem[Gin\'{e} and Nickl(2010)]{GiNi10}
%Gin\'{e}, E. and Nickl, R. (2010). Confidence bands in density estimation. {\em Ann. Statist.} \textbf{38} 1122-1170.
\bibitem[Gin\'{e} and Nickl(2016)]{GiNi16}
Gin\'{e}, E. and Nickl, R. (2016). \textit{Mathematical Foundations of Infinite-Dimensional Statistical Models}. Cambridge University Press.
\bibitem[Guerre et al.(2000)]{GuPeVu00}
Guerre, E., Perrigne, I., and Vuong, Q. (2000). Optimal nonparametric estimation of first-price auctions. \textit{Econometrica} \textbf{68} 525-574.
\bibitem[Hall(1991)]{Ha91}
Hall, P. (1991). On convergence rates of suprema. \textit{Probab. Theory Related Fields} \textbf{89} 447-455.
\bibitem[Hall and Horowitz(2013)]{HaHo13}
Hall, P. and Horowitz, J.L. (2013). A simple bootstrap method for constructing nonparametric
confidence bands for functions. \textit{Ann. Statist.} \textbf{41} 1892-1921.
\bibitem[Hendricks et al.(1987)]{HePoBo87}
Hendricks, K., Porter, R.H., and Boudreau, B. (1987). Information, returns, and bidding behavior in OCS auctions: 1954-1969. \textit{J. Indust. Econom.} \textbf{35} 517-542.
\bibitem[Horowitz(2009)]{Ho09}
Horowitz, J.L. (2009). \textit{Semiparamtric and Nonparametric Methods in Econometrics}. Springer. 
\bibitem[Horowitz and Lee(2012)]{HoLe12}
Horowitz, J. L. and Lee, S. (2012). Uniform confidence bands for functions estimated nonparametrically with instrumental variables. \textit{J. Econometrics} \textbf{168} 175-188.
\bibitem[Horowitz and Markatou(1996)]{HoMa96}
Horowitz, J.L. and Markatou, M. (1996). Semiparametric estimation of regression models for panel data. \textit{Rev. Econom. Stud.} \textbf{63} 145-168.
\bibitem[Johannes(2009)]{Jo09}
Johannes, J. (2009). Deconvolution with unknown error distribution. \textit{Ann. Statist.} \textbf{37} 2301-2323.
\bibitem[Koml\'{o}s et al.(1975)]{KoMaTu75}
Koml\'{o}s, J., Major, P., and Tusn\'{a}dy, G. (1975). An approximation for partial sums of independent rv's and the sample df I. {\em Z. Warhsch. Verw. Gabiete} \textbf{32} 111-131.
\bibitem[Krasnokutskaya(2011)]{Kr11} 
Krasnokutskaya, E. (2011).  Identification and estimation of auction models with unobserved heterogeneity. \textit{‎Rev. Econ. Stud.} \textbf{78} 293-327.
\bibitem[Leadbetter et al.(1983)]{LeLiRo83}
Leadbetter, M.R., Lindgren, G., and Rootz\'{e}n, H. (1983). \textit{Extremes and Related Properties of Random Sequences and Processes}. Springer. 
\bibitem[Levinsohn and Petrin(2003)]{LePe03}
Levinsohn, J. and Petrin, A. (2003) Estimating production functions using inputs to control for unobservables. \textit{Rev. Econ. Stud.} \textbf{70} 317-341.
\bibitem[Li et al.(2000)]{LiPeVu00}
Li, T., Perrigne, I., and Vuong, Q. (2000). Conditionally independent private information in OCS wildcat auctions. \textit{J. Econometrics} \textbf{98} 129-161. 
\bibitem[Li and Vuong(1998)]{LiVu98}
Li, T. and Vuong, Q. (1998). Nonparametric estimation of the measurement error model using multiple indicators. \textit{J. Multivariate Anal.} \textbf{65} 139-165.
\bibitem[Lounici and Nickl(2011)]{LoNi11}
Lounici, K. and Nickl, R. (2011). Global uniform risk bounds for wavelet deconvolution estimators. {\em Ann. Statist.} \textbf{39} 201-231.
\bibitem[McMurry and Politis(2004)]{McPo04}
McMurry, T.L. and Politis, D.N. (2004). 
Nonparametric regression with infinite order flat-top kernels. \textit{J. Nonparametric Statist.} \textbf{16} 549-562.
\bibitem[Meister(2009)]{Me09}
Meister, A. (2009). \textit{Deconvolution Problems in Nonparametric Statistics}. Springer. 
\bibitem[Neumann(1997)]{Ne97}
Neumann, M.H. (1997). On the effect of estimating the error density in nonparametric deconvolution. \textit{J. Nonparametric Statist.} \textbf{7} 307-330.
\bibitem[Neumann(2007)]{Ne07}
Neumann, M.H. (2007). Deconvolution from panel data with unknown error distribution. \textit{J. Multivariate Anal.} \textbf{98} 1955-1968.
\bibitem[Neumann and Rei\ss(2009)]{NeRe09}
Neumann, M.H. and Rei\ss, M. (2009). Nonparametric estimation for L\'{e}vy processes from low-frequency observations. \textit{Bernoulli} \textbf{15} 223-248.
\bibitem[Olley and Pakes(1996)]{OlPa96}
Olley, G.S. and Pakes, A. (1996) The dynamics of productivity in the telecommunications equipment industry. \textit{Econometrica} \textbf{64} 1263-1297.
%\bibitem[Politis and Romano(1999)]{PoRo99}
%Politis, D.N. and Romano, J.P. (1999). Multivariate density estimation with general flat-top kernels of infinite order. \textit{J. Multivariate Anal.} \textbf{68} 1-25.
\bibitem[Schennach(2013)]{Sc13}
Schennach, S.M. (2013). Convolution without independence. Cemmap working paper.
\bibitem[Schennach(2015)]{Sc15}
Schennach, S.M. (2015). A bias bound approach to nonparametric inference. Cemmap working paper  CWP71/15. 
\bibitem[Schennach(2016)]{Sc16}
Schennach, S.M. (2016). Recent advances in the measurement error literature. In: \textit{Annual Review of Economics}, Vol. 8, pp. 341-377. 
\bibitem[Schmidt-Hieber et al.(2013)]{ScMuDu13}
Schmidt-Hieber, J., Munk, A., and D\"{u}mbgen, L. (2013). Multiscale methods for shape constraints in deconvolution: confidence statements for qualitative features. {\em Ann. Statist.} \textbf{41} 1299-1328. 
\bibitem[Smirnov(1950)]{Sm50}
Smirnov, N.V. (1950). On the construction of confidence regions for the density of distribution of random variables. \textit{Doklady Akad. Nauk SSSR} \textbf{74} 189-191 (Russian).
\bibitem[Stefanski and Carroll(1990)]{StCa90}
Stefanski, L. and Carroll, R.J. (1990). Deconvoluting kernel density estimators. \textit{Statistics} \textbf{21} 169-184.
\bibitem[van der Vaart and Wellner(1996)]{vaWe96}
van der Vaart, A.W. and Wellner, J.A. (1996). {\em Weak Convergence and Empirical Processes: With Applications to Statistics}. Springer.
%\bibitem[Wasserman(2005)]{Wa05}
%Wasserman, L. (2005). \textit{All of Nonparamertric Statistics}. Springer. 
\bibitem[Wooldridge(2009)]{Wo09}
Wooldridge, J.M. (2009) On estimating firm-level production functions using proxy variables to control for unobservables. \textit{Economics Lett.} \textbf{104} 112-114.
\bibitem[Xia(1998)]{Xi98}
Xia, Y. (1998). Bias-corrected confidence bands in nonparametric regression. \textit{J. R. Stat. Soc. Ser. B Stat. Methodol.} \textbf{60} 797-811.
\end{thebibliography}
\end{document}